\newcommand{\Tr}{\text{\rm Tr}}
\newtheorem{info}{}
\newtheorem{theorem}[info]{Theorem}
\newtheorem{corollary}[info]{Corollary}
\newtheorem{defin}[info]{Definition}
\newtheorem{lemma}[info]{Lemma}
\newtheorem{proposition}[info]{Proposition}
\newtheorem{remark}[info]{Remark}
\numberwithin{info}{section}
\numberwithin{equation}{section}
\renewcommand{\[}{\begin{equation}}
	\renewcommand{\]}{\end{equation}}
\g@addto@macro\normalsize{%
	\setlength\abovedisplayskip{5pt}
	\setlength\belowdisplayskip{5pt}
	\setlength\abovedisplayshortskip{4pt}
	\setlength\belowdisplayshortskip{4pt}}
\newcommand{\lam}{\lambda}
\renewcommand{\P}{\mathbb{P}}
\renewcommand{\cal}{\mathcal}
\newcommand{\sq}{\sqrt}
\newcommand{\sign}{\mathrm{Sign}}
\renewcommand{\sq}{\sqrt}
\newcommand{\Sum}{\mathrm{\Sum}}
\newcommand{\Var}{\mathrm{Var}}
\newcommand{\To}{\Rightarrow}
\renewcommand{\Im}{\mathrm{Im}}
\renewcommand{\Re}{\mathrm{Re}}
\renewcommand{\Im}{\mathrm{Im}}
\renewcommand{\Re}{\mathrm{Re}}
\newcommand{\Crit}{\cal{N}}
\newcommand{\n}{\textbf{n}}
\newcommand{\D}{\nabla}
\newcommand{\vol}{\mathrm{vol}}
\newcommand{\rank}{\mathrm{rank}}
\newcommand{\disteq}{\stackrel{d}{=}}
\newcommand{\Mb}{\bar{M}}
\newcommand{\Wb}{\bar{W}}
\newcommand{\Sb}{\bar{S}}
\newcommand{\Vb}{\bar{V}}
\newcommand{\Gb}{\bar{G}}
\newcommand{\Hb}{\bar{H}}
\renewcommand{\[}{\begin{equation}}
	\renewcommand{\]}{\end{equation}}
\newcommand{\N}{\mathbb{N}}
\newcommand{\Z}{\mathbb{Z}}
\newcommand{\R}{\mathbb{R}}
\newcommand{\C}{\mathbb{C}}
\newcommand{\E}{\mathbb{E}}
\renewcommand{\P}{\mathbb{P}}
\renewcommand{\cal}{\mathcal}
\renewcommand{\sq}{\sqrt}
\renewcommand{\Im}{\mathrm{Im}}
\renewcommand{\Re}{\mathrm{Re}}
\renewcommand{\Im}{\mathrm{Im}}
\renewcommand{\Re}{\mathrm{Re}}
\renewcommand{\b}{\textbf}
\newcommand{\SN}{S_N}
\newcommand{\Ezero}{E_{0;p,\tau}}
\newcommand{\uth}{u_{th;p,\tau}}
\newcommand{\Einf}{E_{\infty;p,\tau}}
\newcommand{\tauc}{\tau_c(p)}
\begin{document}
	
	\title{Concentration of Equilibria and Relative Instability in Disordered Non-Relaxational Dynamics}
	
	\author{
		Pax Kivimae \thanks{Courant Institute of Mathematical Sciences, New York University. Email: pax.kivimae@cims.nyu.edu. This research was partially supported by NSF grant DMS-2202720.}}

	\maketitle
	
	\begin{abstract}
		We consider a system of random autonomous ODEs introduced by Cugliandolo et al. \cite{original-glassy}, which serves as a non-relaxational analog of the gradient flow for the spherical $p$-spin model. The asymptotics for the expected number of equilibria in this model was recently computed by Fyodorov \cite{complexity-fyodorov-non-gradient} in the high-dimensional limit, followed a similar computation for the expected number of stable equilibria by Garcia \cite{complexity-xavier}. 
		
		We show that for $p>9$, the number of equilibria, as well as the number of stable equilibria, concentrate around their respective averages, generalizing recent results of Subag and Zeitouni \cite{pspin-second,subag-ofer-new} in the relaxational case. In particular, we confirm that this model undergoes a transition from relative to absolute instability, in the sense of Ben Arous, Fyodorov, and Khoruzhenko \cite{fyo-maywigner-2}.
	\end{abstract}
	
	\footnote{This research was partially supported by NSF grant DMS-2202720.}
	
	\section{Introduction}
	
	Understanding the typical behavior of dynamic equilibria (stationary points) in large disordered complex systems is a far-reaching problem motivated by numerous applications in fields such as machine learning, ecology, and economics, among others. The time evolution of such systems may often be modeled by a set of coupled first-order nonlinear autonomous ordinary differential equations (ODEs)
	\[\frac{dx_t}{dt}=\b{F}(x_t),\]
	where $\b{F}$ is a random vector field on some smooth high-dimensional manifold where $x_t$ takes its values.
	
	
	Much of the interest in this question began with the celebrated paper of May \cite{may}, who modeled the behavior of large ecological systems around an equilibrium point as a random linear system, which May then analyzed by employing tools developed in random matrix theory. They observed that as one varies the model parameters, the unique equilibrium transitions from being typically stable to typically unstable at a fixed value, a result which becomes deterministic in the high-dimensional limit. This result, known as the May-Wigner transition, has since been verified rigorously in a wide variety of models and formed the basis of the local study of equilibria in random systems (see \cite{may-review} for a modern review). 
	
	To study global questions about the dynamics however, one must instead work with non-linear systems, which may have many competing equilibria of varying stability. The simplest statistic available in this case is the number of equilibria, as well as the number of equilibria which are additionally stable. The computation of these expected counts has seen extensive development in recent years through application of the Kac-Rice formula (see \cite{fyo2} and the results therein).
	
	Foundational to recent work in the non-relaxational (i.e. where $\b{F}$ is not the gradient vector field of some potential function) case has been works of Ben Arous, Fyodorov and Khoruzhenko \cite{fyo-maywigner-1,fyo-maywigner-2}, who considered a non-linear analogue of May's model and found the model undergoes two distinct transitions. More specifically, the expected number of equilibria was computed in \cite{fyo-maywigner-1}, who found that for suitably strong noise, this count would become exponentially large in the system dimension, whereas for weak noise the system would typically have only a single equilibrium point. This transition is expected to indicate the onset of complex behavior in the dynamics. 
	
	The behavior within the complex phase was further studied in \cite{fyo-maywigner-2} by considering the expected number of stable equilibria. They found a further transition occurs for this quantity, which is specific to the non-relaxational case. In particular, they found that if the relative strength of the non-relaxational terms was brought above a certain threshold, the system appeared to transition from having an exponentially large number of stable equilibria, to simply having none at all. They designated these as the regimes of "relative" and "absolute" instability, respectively. Moreover, by computing the ratio of these expected counts in the relative instability phase, they predicted that while the number of stable equilibria typically would grow exponentially large, the fraction of equilibria which are stable would be exponentially small. 
	
	Similar results results for such expected counts have been obtained in other models \cite{complex-iso,complex-lacriox,complex-neu}, and in particular, the recent works of Fyodorov and Garcia \cite{complexity-fyodorov-non-gradient,complexity-xavier} found analogous transitions in the non-relaxational spherical $p$-spin model. This model was introduced by Cugliandolo et al. \cite{original-glassy} as a generalization of the spherical pure $p$-spin model, which has served as one of the paradigmatic models in complex systems. In \cite{complexity-fyodorov-non-gradient}, asymptotics for the expected number of equilibria were given, which were found to grow exponentially, independent of the relative strength of the non-relaxational term. The expected number of stable equilibria however, were found in \cite{complexity-xavier} to not only depend on this strength, but would transition from growing exponentially large to exponentially small below a certain threshold, indicating a transition from relative to absolute instability should occur at this point.
	
	The purpose of this paper is to confirm that such a transition occurs in this model. This is, to the knowledge of the author, the first rigorous confirmation that the transition from relative to absolute instability occurs in any of the models predicted above. To do this, we must understand the typical (quenched) behavior of these counts, which in general may deviate drastically from what one would predict from their expected (annealed) value (see \cite{chensen} and the discussion in \cite{fyo-maywigner-2}). 
	
	While powerful heuristics for computing the typical behavior exist in the physics literature \cite{quenched-complexity,complex-replica}, rigorously computing them, and understanding when they coincide with their annealed counterparts, has only seen progress quite recently. In particular, in the relaxational case, major progress was made by Subag \cite{pspin-second}, who showed that the number of local minima for the spherical $p$-spin model concentrates around its average. This result was then extended by Subag and Zeitouni \cite{subag-ofer-new} to the number of critical points. These results have proven important in establishing properties of the dynamics \cite{pspin-second-application3} as well as properties of corresponding potential landscape \cite{pspin-second-application1,pspin-second-application4}. Additionally, these works have since been extended to other models in the relaxational case  \cite{pspin-second-application2,kivimae,david-quench}. The main goal of this paper is an extension of these methods, and in particular the required framework and results in random matrix theory, the non-relaxational setting.
	
	We now recall the model, introduced in \cite{original-glassy} and based on the earlier model introduction in the $p=2$ case introduced by \cite{original-1}. Fix a choice of $p\ge 3$ and $\tau\in [-(p-1)^{-1},1]$. Let $\b{f}(x)=(f_k(x))_{k=1}^{N}$ be the smooth centered Gaussian vector-valued function on $\R^N$ whose components satisfy
	\[\E[f_k(x)f_l(y)]=\delta_{kl}p\left(x,y\right)_N^{p-1}+\tau p(p-1)\left(\frac{y_lx_k}{N}\right)\left(x,y\right)_N^{p-2},\;\;\;1\le k,l\le N,\label{eqn:model-def}\]
	where $(x,y)_N=N^{-1}\sum_{i=1}^Nx_iy_i$ denotes the Euclidean inner product. We then define $\b{F}(x)$ for $x\in \SN$ by
	\[\b{F}(x):=-\lam(x)x+\b{f}(x),\]
	where $\SN=\{x\in \R^N:\|x\|^2=N\}$ denotes the $(N-1)$-sphere of radius $\sqrt{N}$ and $\lam(x)=(x,\b{f}(x))_N$ is a Lagrange multiplier chosen so that 
	\[\b{F}(x)\in T_xS_N=\{v\in \R^N:(v,x)_N=0\}.\]
	One may alternatively define $\b{F}$ by specifying the terms in the following decomposition of $\b{f}$ 
	\[f_i(x)=c_s\D_i^{euc}H(x)+\frac{c_a}{N}\sum_{j=1}^{N}x_j A_{ij}(x),\label{eqn:model-def-explicit}\]
	where the matrix $A_{ij}(x)$ is anti-symmetric (i.e. $A_{ij}=-A_{ji}$), $\D_i^{euc}$ denotes the $i$-th standard Euclidean partial derivative, and $c_s,c_a$ are normalization constants. When $p=2$, this coincides with the decomposition of a matrix in symmetric and anti-symmetric parts. In addition, it is similar in form to the gradient-solenodial (Hodge) decomposition used to define the models in \cite{fyo-maywigner-2,fyo-maywigner-1}. In our case, we take $H$ to be a spherical $p$-spin model, and for each of $1\le i<j\le N$, takes $A_{ij}$ to be an independent $(p-2)$-spin model. Then taking $c_s=\sq{p^{-1}(1+(p-1)\tau)}$ and $c_a=\sq{(p-1)(1-\tau)}$, the components of (\ref{eqn:model-def-explicit}) satisfy (\ref{eqn:model-def}), as checked in Lemma \ref{lem:model-equivalence} of Appendix \ref{appendix:covariance-computation}.
	
	We note that in the case of $\tau=1$ we have that $c_a=0$ and that $c_s=1$ so that $\b{F}(x)=\D H(x)$. That is to say in this case, $\b{F}$ is not only relaxational, but given by the gradient vector field of the spherical $p$-spin model. This contrasts the case where $\tau=-(p-1)^{-1}$, where we have that $c_{s}=0$ so that $\b{F}$ is purely anti-symmetric.
	
	Another case of interest is when $\tau=0$, where one sees that (\ref{eqn:model-def}) becomes $\E[f_k(x)f_l(y)]=\delta_{kl}p\left(x,y\right)_N^{p-1}$, so that each of the $f_i$ are independent. Moreover, each coincides with a spherical $(p-1)$-spin model up to a normalization factor. In particular, in this case equilibria correspond to the zeros of a set of i.i.d. random functions.
	
	We will now prepare to precisely state our results, for which we will need notation. Let us denote the number of equilibria of the vector field $\b{F}$ as $\Crit_{N,tot}$, and similarly for denote the number of stable equilibria as $\Crit_{N,st}$. Asymptotics for $\E[\Crit_{N,tot}]$ were obtained by Fyodorov \cite{complexity-fyodorov-non-gradient}.
	
	\begin{theorem}
		\label{theorem:fyodorov 1st moment weaker}\cite{complexity-fyodorov-non-gradient}
		We have that for $|\tau|<1$ and $p\ge 3$ that
		\[\lim_{N\to \infty} N^{-1}\log(\E[\Crit_{N,tot}])=\frac{1}{2}\log(p-1).\]
	\end{theorem}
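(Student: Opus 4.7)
The plan is to compute $\E[\Crit_{N,tot}]$ via the Kac-Rice formula applied to the random tangent vector field $\b{F}$ on $\SN$ and to extract the leading exponential rate. Because the joint law of $\b{f}$ is invariant under the orthogonal group acting on $\R^N$, the Kac-Rice integrand is constant on $\SN$, giving
\[
\E[\Crit_{N,tot}]=\vol(\SN)\cdot\phi_{\b{F}(e)}(0)\cdot\E\bigl[\,|\det D\b{F}(e)|\,\bigm|\,\b{F}(e)=0\,\bigr]
\]
for any fixed $e\in\SN$, where $D\b{F}(e)$ denotes the derivative of $\b{F}$ restricted to $T_e\SN$. Both $\vol(\SN)$ and the Gaussian density $\phi_{\b{F}(e)}(0)$, which is determined by the covariance in (\ref{eqn:model-def}), grow only polynomially in $N$ and play no role in the exponential rate.

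Next, differentiating $\b{F}(x)=-\lam(x)x+\b{f}(x)$ in tangent directions at an equilibrium yields
\[
D\b{F}(e)=\D\b{f}(e)\big|_{T_e\SN}-\lam(e)\,I,
\]
since the tangential derivative of $\lam(x)x$ collapses to $-\lam(e)I$ when $\b{F}(e)=0$. Using the decomposition (\ref{eqn:model-def-explicit}), the symmetric and antisymmetric parts of $\D\b{f}$ come from a $p$-spin Hessian and a $(p-2)$-spin tensor respectively, and a direct covariance calculation shows that conditionally on $\b{F}(e)=0$ the matrix $\D\b{f}(e)|_{T_e\SN}$ has the law of $c_s(p,\tau)\,G_s+c_a(p,\tau)\,G_a$, where $G_s$ is a GOE matrix, $G_a$ is an independent real antisymmetric Gaussian matrix, and $\lam(e)$ is an independent one-dimensional Gaussian. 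The task therefore reduces to the large-$N$ behavior of $\E[\,|\det(c_sG_s+c_aG_a-\lam I)|\,]$.

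For the elliptic ensemble $c_sG_s+c_aG_a$ the limiting empirical spectrum fills an ellipse $\mu_{ell}$ in $\C$ with axes determined by $(c_s,c_a)$. Granting a deterministic equivalent of the form $\tfrac{1}{N}\log|\det(c_sG_s+c_aG_a-uI)|\to\int\log|z-u|\,d\mu_{ell}(z)$, uniform in $u$ on compacta, Laplace's method in the independent Gaussian $\lam$ reduces the exponential rate to a one-dimensional variational problem over $u\in\R$. A direct computation shows the supremum equals $\tfrac{1}{2}\log(p-1)$, independently of $\tau$, the optimizer being a real shift outside the ellipse where the elliptic logarithmic potential agrees with the semicircle logarithmic potential. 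This coincides with the Auffinger-Ben Arous-Cerny rate for the pure spherical $p$-spin.

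The main obstacle is the deterministic equivalent for the elliptic log-determinant. The spectrum of $c_sG_s+c_aG_a$ is genuinely complex, so one cannot simply integrate $\log|x-u|$ against a one-dimensional density, and arguments based on semicircle convergence do not apply verbatim. Two natural routes are (i) combine weak convergence of the empirical spectral measure for elliptic ensembles with separate control of the smallest singular value of $c_sG_s+c_aG_a-uI$, ruling out small-eigenvalue contributions via appropriate smoothing, or (ii) exploit the real structure to represent the squared determinant as a Pfaffian and apply the skew-orthogonal polynomial framework developed for real elliptic Ginibre-type ensembles. Either route yields the required uniform convergence on compacta, after which the Laplace step in $\lam$ is routine and delivers the rate $\tfrac{1}{2}\log(p-1)$.
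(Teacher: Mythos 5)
Your overall architecture is the same as the one the paper recalls from Fyodorov: Kac--Rice plus isotropy reduces $\E[\Crit_{N,tot}]$ to a volume factor times the expected absolute determinant of an elliptic matrix shifted by an independent Gaussian multiple of the identity (this is exactly Lemma \ref{lem:first-moment-exact}), and the exponential rate then becomes a one-dimensional variational problem over the Lagrange-multiplier value $u$, with rate function $\Sigma^{p,\tau}$ as in Theorem \ref{theorem:fyodorov 1st moment}. Your two proposed routes to the deterministic equivalent for the log-determinant (singular values with separate small-singular-value control, or the Pfaffian/skew-orthogonal machinery for the real elliptic ensemble) are also the tools the paper itself develops in Sections \ref{section:elliptic-ensemble-background} and \ref{section:integral moments of characteristic}.

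The concrete error is in your last step: the maximizer of the variational problem is $u=0$, the centre of the ellipse, not ``a real shift outside the ellipse.'' Explicitly, with $\alpha=1+(p-1)\tau$ the rate function is $\Sigma^{p,\tau}(u)=\tfrac12+\tfrac12\log(p-1)-\tfrac{u^2}{2p\alpha}+\phi_\tau\bigl(u/\sqrt{p(p-1)}\bigr)$. Inside the ellipse $\phi_\tau(v)=\tfrac{v^2}{2(1+\tau)}-\tfrac12$, so the coefficient of $u^2$ there is $-\tfrac{1}{2p\alpha}+\tfrac{1}{2p(p-1)(1+\tau)}<0$ (this is equivalent to $(p-1)(1+\tau)>1+(p-1)\tau$, i.e.\ to $p>2$); the function is even and strictly decreasing on $[0,\infty)$ (Remark \ref{remark:strict increasing of phi}), and the supremum $\tfrac12\log(p-1)$ is attained at $u=0$ because $\phi_\tau(0)=-\tfrac12$ for every $\tau$ --- that, rather than any agreement with the semicircle potential outside the ellipse, is why the answer is $\tau$-independent. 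For $u$ outside the ellipse the Gaussian penalty $-u^2/(2p\alpha)$ together with the strict monotonicity already makes the rate strictly smaller, so your described optimizer would give the wrong (smaller) exponent. This misidentification also matters for the feasibility of your plan: since the true optimizer sits in the middle of the bulk, the genuinely hard part of the determinant asymptotics --- uniform control of the small singular values of $A_N-uI$ for $u$ inside the ellipse --- cannot be bypassed, whereas the outside-the-bulk regime you point to is the easy case (cf.\ Proposition \ref{prop:moments of character outside bulk}). With the maximizer corrected, the rest of your argument goes through and reproduces the cited proof.
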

	
	Note that if one lets $\tau=1$, this coincides with the results of Auffinger, Ben Arous, and \v{C}ern\'{y} \cite{pspin-one} for the spherical $p$-spin model. Building upon this, Garcia \cite{complexity-xavier} computed asymptotics for $\E]\Crit_{N,st}]$.
	
	\begin{theorem}
		\label{theorem: xavier 1st moment}\cite{complexity-xavier}
		We have that for $|\tau|<1$ and $p\ge 3$ that
		\[\lim_{N\to \infty} N^{-1}\log(\E[\Crit_{N,st}])=\frac{1}{2}\log(p-1)-\frac{(p-2)(1+\tau)}{2(1+(p-1)\tau)}=:\theta_{p,\tau}.\]
	\end{theorem}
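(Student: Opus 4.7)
My plan is to apply the Kac-Rice formula to the stable-equilibrium count, which gives
\[
\E[\Crit_{N,st}] = \int_{S_N} \phi_x(0)\, \E\bigl[ |\det D\b{F}(x)|\, \mathbf{1}_{\sigma(D\b{F}(x))\subset \C_-} \bigm| \b{F}(x)=0 \bigr]\, dx,
\]
where $\phi_x$ is the Gaussian density of $\b{F}(x)\in T_xS_N$ and $D\b{F}(x)$ is viewed as an endomorphism of $T_xS_N$. By the rotational invariance of the law of $\b{f}$, the integrand is independent of $x$, so I would reduce the integral to $|S_N|$ times the value at a fixed base point $x_0 = \sq{N}\,e_N$.

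Next, I would perform a Gaussian covariance calculation based on (\ref{eqn:model-def}) and (\ref{eqn:model-def-explicit}) to identify the conditional law of $D\b{F}(x_0)$ given $\b{F}(x_0)=0$. The Hessian of the potential part $c_s H$ contributes a symmetric GOE-like piece and the derivative of the solenoidal part $N^{-1}\sum_j x_j A_{ij}$ contributes an antisymmetric piece, while the Lagrange multiplier and the spherical connection together produce a scalar shift on the tangent space. This yields a representation
\[
D\b{F}(x_0)\bigm|_{T_{x_0}S_N} \disteq -\mu\, I_{N-1} + G,
\]
where $\mu$ is a Gaussian scalar coming from the radial component of $\b{f}(x_0)$, and $G$ is an $(N-1)\times(N-1)$ elliptic Gaussian ensemble whose entry variance and symmetry parameter $\varrho\in[-1,1]$ are determined by $c_s^2 = p^{-1}(1+(p-1)\tau)$ and $c_a^2 = (p-1)(1-\tau)$.

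With this representation, the exponent $N^{-1}\log\E[\Crit_{N,st}]$ splits into three pieces. The density factor $\phi_{x_0}(0)$ contributes only a sub-exponential prefactor. The absolute determinant $\E|\det(-\mu I + G)|$ is governed in the large-$N$ limit by the elliptic law $\nu_{ell}$ supported on an ellipse with semi-axes proportional to $(1\pm\varrho)$, yielding $\exp\bigl(N\int\log|\mu+z|\,d\nu_{ell}(z)+o(N)\bigr)$. Finally, the stability indicator introduces a factor $e^{-N\Psi(\mu)+o(N)}$, where $\Psi$ is the large-deviation rate for the event that all eigenvalues of $G$ have real part below $\mu$. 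Integrating over the Gaussian law of $\mu$ and applying Laplace asymptotics yields a variational formula; the optimizer $\mu^{\star}$ corresponds to the marginally stable configuration in which the right edge of the ellipse just touches $\mu$, and substituting the explicit values of $c_s$, $c_a$, and $\mu^{\star}$ collapses the formula to the closed form $\theta_{p,\tau}$.

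The main obstacle will be the large-deviation analysis of the stability event for the genuinely non-Hermitian matrix $G$. For $\tau=1$ the spectrum is real and the problem reduces to the GOE deviations used in the relaxational case by Auffinger, Ben Arous and \v{C}ern\'{y}. For $\tau<1$, however, the eigenvalues of $G$ are complex, and one cannot reduce to a one-dimensional variational principle in eigenvalues; instead one must work with the Girko elliptic joint density and carry out a Coulomb-gas analysis of both the exponential rate for the rightmost real part and of the joint expectation with $|\det|$, following the line developed by Ben Arous, Fyodorov and Khoruzhenko for the non-linear May model. Modulo this ingredient, the remaining algebra is direct and produces the stated exponent $\theta_{p,\tau}$.
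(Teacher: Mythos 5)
This theorem is imported from \cite{complexity-xavier} and the paper gives no proof of it, so there is no internal argument to compare against; your outline follows the same route as that reference (and as the first-moment computation of \cite{complexity-fyodorov-non-gradient}, quoted here as Lemma \ref{lem:first-moment-exact}). Your representation of the conditional Jacobian is in fact exact at the one-point level: after normalization it is $A_{N-1}-\hat X_N I$ with $A_{N-1}\sim GEE(N-1,\tau)$ (the symmetry parameter of the conditional ensemble is $\tau$ itself) and $\hat X_N$ an independent Gaussian, and the Laplace optimizer sits at the marginal point where the shift meets the real edge $1+\tau$ of the ellipse, yielding $\Sigma^{p,\tau}(-\Einf)=\theta_{p,\tau}$; monotonicity of $\Sigma^{p,\tau}$ on the stable half-line (Remark \ref{remark:strict increasing of phi}) is what places the maximizer at the threshold. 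Two points deserve correction. First, the density factor $\varphi_{x_0}(0)\approx(2\pi p)^{-(N-1)/2}$ is \emph{not} subexponential; it is exponentially small and must be combined with the sphere volume and the normalization of the determinant to produce the $\tfrac12+\tfrac12\log(p-1)$ constant in $\Sigma^{p,\tau}$, so the bookkeeping there needs care. Second, the probability that the entire elliptic spectrum lies to one side of a value strictly inside the bulk decays at the Coulomb-gas speed $e^{-cN^2}$ rather than $e^{-N\Psi(\mu)}$; this works in your favor, since on the $e^{O(N)}$ scale the stability indicator acts as a hard constraint ($\Psi=0$ outside the edge, $\Psi=+\infty$ inside), collapsing the variational problem to a constrained maximization over the stable range — though, as you correctly flag, making this rigorous for the non-Hermitian spectrum near the edge is the genuinely technical part of \cite{complexity-xavier}.
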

	
	With some mild algebraic manipulation, this shows that for each $p\ge 3$, there is a critical value
	\[\tauc:=\frac{(p-2)-\log(p-1)}{(p-1)\log(p-1)-(p-2)}> 0,\]
	such that for $\tau<\tauc$ it is exponentially rare that the system has even a single stationary equilibrium, while for $\tau>\tauc$ the expected number of stationary equilibria is exponentially large, but growing at a rate slower than the expected total number. Note this coincides exactly with the transition from relative to absolute instability found in \cite{fyo-maywigner-2}. Our main result will be the following concentration result, which confirms that this transition occurs. 
	
	\begin{theorem}
		\label{theorem:main-theorem-weak}
		Assume $p>9$ and $\tau\in (0,1)$. Then in probability as $N\to \infty$
		\[N^{-1}\log(\Crit_{N,tot})\To \frac{1}{2}\log(p-1).\label{eqn:theorem:weakmain1}\]
		If in addition $\tau\in (\tauc,1)$, then in probability as $N\to \infty$
		\[N^{-1}\log(\Crit_{N,st})\To \theta_{p,\tau},\label{eqn:theorem:weakmain2}\]
		so that in particular we have also in probability as $N\to \infty$
		\[N^{-1}\log(\Crit_{N,st}/\Crit_{N,tot})\To -\frac{(p-2)(1+\tau)}{2(1+(p-1)\tau)}.\]
	\end{theorem}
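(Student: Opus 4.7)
The plan is to establish Theorem~\ref{theorem:main-theorem-weak} via the second moment method, following the strategy of Subag~\cite{pspin-second} and Subag--Zeitouni~\cite{subag-ofer-new} in the relaxational case, but with the essential new ingredient of replacing the GOE-based random matrix inputs with the corresponding results for the real elliptic ensemble dictated by the non-symmetric Jacobian of $\b{F}$. The upper bounds in (\ref{eqn:theorem:weakmain1}) and (\ref{eqn:theorem:weakmain2}) are immediate from Markov's inequality combined with Theorems~\ref{theorem:fyodorov 1st moment weaker} and~\ref{theorem: xavier 1st moment}, so the substance of the work is to produce matching lower bounds in probability.

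For the lower bound I would apply the Kac--Rice formula to the pair $(\b{F}(x),\b{F}(y))$ to express the second moment of either count as an integral over $x,y\in \SN$ of a Gaussian joint density at the origin times the conditional expectation of $|\det D\b{F}(x)\det D\b{F}(y)|$ given $\b{F}(x)=\b{F}(y)=0$, possibly further restricted to the event that both Jacobians are stable. Using the $O(N)$-invariance of the law of $\b{f}$ this reduces to a one-dimensional integral of the form $\int_{-1}^{1} e^{N\Sigma(q)+o(N)}\,dq$ in the overlap $q=(x,y)_N$. The heart of the matter is to verify that the rate function $\Sigma$ is uniquely maximized at $q=0$, with value $2\theta$ for the relevant annealed complexity $\theta\in\{\tfrac{1}{2}\log(p-1),\theta_{p,\tau}\}$, so that $\E[\Crit_N^2]\le e^{o(N)}\E[\Crit_N]^2$. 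Combined with Paley--Zygmund this gives $\P(\Crit_N\ge e^{N(\theta-\e)})\ge c>0$, which can be upgraded to convergence in probability by a standard invariance argument based on the rotational symmetry of the sphere, as in~\cite{pspin-second,subag-ofer-new}.

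The main obstacle is the random matrix analysis required to evaluate the conditioned determinantal expectation above. The Jacobian $D\b{F}(x)$ is not symmetric: after conditioning on $\b{F}(x)=0$ and accounting for the Lagrange multiplier shift, it is a real elliptic matrix whose asymmetry parameter is determined by $\tau$, and stability demands that \emph{all} its eigenvalues have negative real part. Obtaining the annealed asymptotics for one such conditioned (and possibly stability-restricted) determinant is precisely what was carried out in~\cite{complexity-fyodorov-non-gradient,complexity-xavier}; the present work will require the corresponding asymptotics for the \emph{joint} determinant of two elliptic matrices whose entries are coupled through the overlap $q$, together with sufficiently precise decay estimates on the joint stability probability to rule out entropy contributions from $q\ne 0$. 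Developing these concentration and large-deviation inputs for the real elliptic ensemble---the direct analogues of the GOE results used in~\cite{pspin-second,subag-ofer-new}---is where the bulk of the technical effort lies, and the interplay between the Gaussian entropy gain at nontrivial overlap and the determinantal decay there is what should dictate the restriction $p>9$.
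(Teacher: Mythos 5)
Your high-level framework -- Kac--Rice for the pair $(\b{F}(x),\b{F}(y))$, reduction by $O(N)$-invariance to a one-dimensional overlap integral, identification of $q=0$ as the unique maximizer of the two-point rate function, and the need for elliptic-ensemble analogues of the GOE determinant asymptotics -- is indeed the skeleton of the paper's argument (Lemma \ref{lem:second-moment-exact}, Theorem \ref{theorem:KR-2point}, Lemmas \ref{lem:diagonal-lemma-intro} and \ref{lem:analytic equality result}). However, there is a genuine gap at the decisive step: you conclude only $\E[\Crit_N^2]\le e^{o(N)}\E[\Crit_N]^2$ and then invoke Paley--Zygmund plus ``a standard invariance argument'' to get convergence in probability. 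Paley--Zygmund with an $e^{o(N)}$ second-moment ratio yields only $\P(\Crit_N\ge \tfrac12\E[\Crit_N])\ge e^{-o(N)}$, a lower bound that may still tend to zero, and no rotational-invariance argument can upgrade it: $\Crit_N$ is itself a rotation-invariant functional of the field, so the symmetry acts trivially on it, and $\log\Crit_N$ is not a Lipschitz functional of the disorder, so no Gaussian concentration is available either. The paper states this obstruction explicitly: Theorem \ref{theorem:main-theorem exponential order concentration} (the exponential-scale matching you propose) ``is insufficient to establish any concentration result.''

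What the paper actually proves is the much stronger Theorem \ref{theorem:main-theorem constant order concentration bulk}, namely $\E[\Crit_N(u)^2]/\E[\Crit_N(u)]^2\to 1$, from which convergence in probability of $N^{-1}\log\Crit_N$ follows by Chebyshev. Obtaining this requires (i) localizing the second moment to pairs with overlap $|r|\lesssim\sqrt{\log N/N}$ and nearly equal Lagrange multipliers (Lemmas \ref{lem:diagonal-lemma-intro}, \ref{lem:analytic equality result}, \ref{lem:ratio of second and first moment}); (ii) showing that in this regime the ratio $\Delta_N(r,u_1,u_2)$ of the correlated determinant expectation to the product of independent ones is $1+o(1)$ (Lemma \ref{lem:delta-estimates-near-zero}), which demands moment bounds for $|\det(A_N-xI)|$ of the elliptic ensemble sharp up to constants (Theorem \ref{theorem:moments of character main}, proved via Pfaffian correlation kernels in Section \ref{section:integral moments of characteristic}) together with small-singular-value (overcrowding) estimates. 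Your attribution of the condition $p>9$ to the competition between entropy and determinantal decay at macroscopic overlap is also off: the restriction enters only in Lemma \ref{lem:delta-estimates-near-zero}, in controlling the rank-two perturbation and near-singularity events at overlaps $r_N\sim\sqrt{\log N/N}$, where one needs $N^{3/2}\,r_N^{(p-3)/2-c}\to 0$.
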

	
	We prove this by applying the second moment method. Before doing this, it will be useful to introduce the counts for equilibrium points with a given Lagrange multiplier. A key motivation for this is that the value taken by the Lagrange multiplier at an equilibrium point will essentially determine the equilibrium's stability. Explicitly, let us define the threshold
	\[\Einf=(1+\tau)\sq{p(p-1)}.\]
	Then a fixed equilibrium point $x\in \SN$ satisfying $\lam(x)>\Einf$ is typically stable, and similarly a equilibrium point satisfying $\lam(x)<\Einf$ is typically unstable. In particular, we will find it useful to denote for $u\in \R\cup\{-\infty\}$, the total number of equilibria $x$, such that $\lam(x)>u$ by $\Crit_N(u)$, and similarly for $\Crit_{N,st}(u)$. We now recall the following more general form of Theorem \ref{theorem:fyodorov 1st moment weaker}, also derived in \cite{complexity-fyodorov-non-gradient}.
	
	\begin{theorem}
		\label{theorem:fyodorov 1st moment}\cite{complexity-fyodorov-non-gradient}
		Assume $|\tau|<1$ and $p\ge 3$. For $u\in \R\cup\{-\infty\}$ we have that
		\[\lim_{N\to \infty} N^{-1}\log(\E[\Crit_N(u)])=\sup_{v>u}\Sigma^{p,\tau}(v),\]
		where $\Sigma^{p,\tau}$ is a fixed function recalled in (\ref{eqn:def:1st complexity function}) below.
	\end{theorem}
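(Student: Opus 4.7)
The plan is to apply the Kac--Rice formula to the random vector field $\mathbf{F}$ on $S_N$ restricted to the set $\{\lambda(x)<u\}$. This gives an expression of the form
\[
\E[\Crit_N(u)] = \int_{S_N} \E\bigl[|\det(D\mathbf{F}(x)|_{T_xS_N})|\,\mathbf{1}\{\lambda(x)<u\} \,\big|\, \mathbf{F}(x)=0\bigr]\, \varphi_{\mathbf{F}(x)}(0)\, d\mathrm{vol}(x),
\]
where $\varphi_{\mathbf{F}(x)}$ is the Gaussian density of $\mathbf{F}(x)$ on $T_xS_N$. The law of $\mathbf{f}$ specified in (\ref{eqn:model-def}) is manifestly $O(N)$-invariant, so the integrand is constant in $x$; fixing a base point $e\in S_N$ reduces the integral to $\mathrm{vol}(S_N)$ times the integrand at $e$. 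Writing the indicator via the density of $\lambda(e)$ then yields an integral $\int_{-\infty}^{u}g_N(v)\,dv$, where $g_N(v)$ is (up to a deterministic Gaussian prefactor) the expected absolute determinant of $D\mathbf{F}(e)|_{T_eS_N}$ conditionally on $\mathbf{F}(e)=0$ and $\lambda(e)=v$.

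Next I would compute the conditional Gaussian law of $D\mathbf{F}(e)|_{T_eS_N}$ from the covariance in (\ref{eqn:model-def}). Differentiating twice and using the Hodge--type decomposition (\ref{eqn:model-def-explicit}), the tangential Jacobian should decompose as a symmetric part (governed by the spherical $p$-spin Hessian, of GOE type with variance proportional to $c_s^2=p^{-1}(1+(p-1)\tau)$) plus an antisymmetric part (coming from the $(p-2)$-spin models $A_{ij}$, with variance proportional to $c_a^2=(p-1)(1-\tau)$), with the diagonal shifted by $-vI_{N-1}$ after conditioning. Up to negligible lower rank corrections and an $O(1)$ rank perturbation from the spherical constraint, this makes $D\mathbf{F}(e)|_{T_eS_N}$ a shifted real elliptic Gaussian matrix with asymmetry parameter determined by $\tau$, whose limiting spectrum is supported on an ellipse in $\C$.

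The core technical step is the asymptotic evaluation of the expected absolute determinant of this shifted elliptic matrix. Since the matrix is non-Hermitian, one cannot directly use the Wegner/Dyson-type identities available for the GOE that underlie the relaxational analysis of \cite{pspin-one}. Instead I would use Hermitization: write $\log|\det(M-vI)|=\tfrac12 \log\det\bigl((M-vI)(M-vI)^{*}\bigr)$ and analyze the singular value distribution of $M-vI$ via the self-consistent equation (or the known explicit Pfaffian representation for the real elliptic ensemble). This should produce an explicit rate function $\Phi_{p,\tau}(v):=\lim_N N^{-1}\log\E|\det(M-vI)|$ that reproduces the ``May--Wigner'' structure: $\Phi_{p,\tau}(v)$ is affine in $v$ when $|v|$ lies inside the ellipse edge $\Einf=\sqrt{p(p-1)(1+\tau)}$ and is strictly larger outside it. Combining $\Phi_{p,\tau}(v)$ with the Gaussian density prefactor and the volume of $S_N$ yields the single-point complexity $\Sigma^{p,\tau}(v)$.

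Finally, to pass from the pointwise asymptotics $N^{-1}\log g_N(v)\to \Sigma^{p,\tau}(v)$ to the claimed limit for $N^{-1}\log\E[\Crit_N(u)]$, I would apply a Laplace-type argument to $\int_{-\infty}^{u}g_N(v)\,dv$. This requires uniform control and tail bounds on $g_N$, obtained from the Gaussian tails of $\lambda(e)$ together with a crude deterministic upper bound on the determinant (e.g.\ via the Hadamard inequality), which suffice to localize the mass of the integral to a bounded window. The main obstacle will be the second step above: precise control of the expected absolute determinant of the shifted non-Hermitian real elliptic matrix, and in particular showing the rate function is continuous at the ellipse boundary so that the resulting $\Sigma^{p,\tau}(v)$ is well behaved across the stability transition at $v=-\Einf$.
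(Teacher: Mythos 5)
Your outline follows essentially the same route as the cited source and as the machinery this paper itself sets up: the paper does not reprove Theorem \ref{theorem:fyodorov 1st moment} but quotes it from \cite{complexity-fyodorov-non-gradient}, and your steps correspond exactly to Lemma \ref{lem:first-moment-exact} (Kac--Rice plus isotropy, reducing to the expected absolute determinant of a shifted $GEE(N-1,\tau)$ matrix) followed by the Hermitization/log-potential analysis of the kind carried out in Section \ref{section:elliptic-ensemble-background} and a Laplace argument in $v$. One small correction: inside the bulk the determinant rate function is not affine in $v$ but quadratic, $\phi_\tau(v)=\frac{v^2}{2(1+\tau)}-\frac12$ for $|v|\le 1+\tau$ (rescaled to $\phi_{\tau,p}$ in the final formula), and it is the continuity of $\phi_\tau$ across $|v|=1+\tau$ that makes $\Sigma^{p,\tau}$ well behaved at $\pm\Einf$, exactly as you anticipate.
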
 
	
	An important threshold for the function $\Sigma^{p,\tau}$ will be the unique value $\Ezero>0$, for which $\Sigma^{p,\tau}(v)<0$ for $v>\Ezero$. Note by Markov's inequality, it is exponentially rare for there to exist a single equilibrium such that the associated Lagrange multiplier is greater than $\Ezero+\epsilon$ for any $\epsilon>0$. We also observe that $\Sigma^{p,\tau}(\Einf)=\theta_{p,\tau}$, so that $\tauc$ may equivalently be defined as unique $\tauc$, such that $\Einf>\Ezero$ for $\tau<\tauc$ and $\Einf<\Ezero$ for $\tau>\tauc$. 
	
	With these thresholds defined, our primary result then is the following second moment estimates.
	
	\begin{theorem}
		\label{theorem:main-theorem constant order concentration bulk}
		Assume $\tau \in (0,1)$ and $p>9$. Then for $u\in [-\infty,\Ezero)$ with $u\neq \Einf$ we have that
		\[\lim_{N\to \infty}\frac{\E[\Crit_N(u)^2]}{\E[\Crit_N(u)]^2}=1.\label{eqn:theorem:main1}\]
		If in addition $\tau\in (\tauc,1)$ and $u>\Einf$
		\[\lim_{N\to \infty}\frac{\E[\Crit_{N,st}(u)^2]}{\E[\Crit_{N,st}(u)]^2}=1.\label{eqn:theorem:main2}\]
	\end{theorem}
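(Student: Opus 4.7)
The plan is to prove Theorem \ref{theorem:main-theorem constant order concentration bulk} by the second moment method via the Kac--Rice formula for pairs of points, followed by Laplace's method in the overlap variable $q=(x,y)_N$. Writing
\[
\E[\Crit_N(u)^2] = \int_{\SN \times \SN} \rho_2(x,y;u)\,dx\,dy,
\]
where $\rho_2$ is the two-point Kac--Rice density with both Lagrange multipliers constrained below $u$, rotational invariance of $\b{f}$ reduces the computation to a one-dimensional integral in $q \in [-1,1]$, weighted by the sphere Jacobian factor $(1-q^2)^{(N-3)/2}$.

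The next step is to identify the exponential rate function $\Psi(q;u)$ so that
\[
\E[\Crit_N(u)^2] = \int_{-1}^1 \exp(N\Psi(q;u) + o(N))\,dq.
\]
Its three ingredients are: the explicit Gaussian density of $(\b{F}(x),\b{F}(y))$ at the origin, whose covariance depends on $q$ through (\ref{eqn:model-def}); the conditional law of the pair of Jacobians $(D\b{F}(x),D\b{F}(y))$ given $\b{F}(x)=\b{F}(y)=0$, which is a correlated pair of shifted real elliptic matrices (symmetric plus anti-symmetric Gaussian parts, with cross-covariance parametrized by $q$) with shifts $-\lam(x)I$, $-\lam(y)I$; and the large-deviation asymptotics for the expected product of their absolute determinants against the indicator $\{\lam(x),\lam(y)<u\}$ on the jointly Gaussian Lagrange multipliers. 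At $q=0$ the two points decouple, and direct computation gives $\Psi(0;u) = 2\sup_{v<u}\Sigma^{p,\tau}(v)$, matching $\lim N^{-1}\log \E[\Crit_N(u)]^2$ by Theorem \ref{theorem:fyodorov 1st moment}.

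The main analytic goal is then to show that $q=0$ is the strict global maximum of $\Psi(\cdot;u)$ on $[-1,1]$ and that the local Laplace expansion near $q=0$ reproduces $\E[\Crit_N(u)]^2(1+o(1))$ exactly. Following the approach of Subag and Subag--Zeitouni \cite{pspin-second,subag-ofer-new}, I would first expand the overlap-dependent density, determinant, and Lagrange-multiplier factors to verify $\Psi''(0;u)<0$, and then rule out competing global maxima for $q$ bounded away from $0$, with the degenerate endpoints $q=\pm 1$ handled via standard truncation. The hypothesis $p>9$ enters here, providing the same quantitative margin as in the relaxational case to suppress pair contributions at non-trivial overlaps. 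For the stable-count statement (\ref{eqn:theorem:main2}), I would additionally impose that every eigenvalue of $D\b{F}$ have negative real part at each of $x,y$: when $u<-\Einf$ the shift $-\lam(x)I$ pushes the entire elliptic bulk strictly into the open left half plane, so the stability indicator contributes only a $1+o(1)$ factor in the conditional determinant expectation, and the same Laplace analysis carries over. The restriction $u\neq-\Einf$ avoids the degenerate case where the shifted elliptic spectrum touches the imaginary axis.

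The main obstacle is the random-matrix input: one needs sharp large-deviation asymptotics for $\E[|\det(M_1-\lam_1 I)|\,|\det(M_2-\lam_2 I)|]$ where $(M_1,M_2)$ is a pair of correlated shifted real elliptic matrices, with enough control in the coupling parameter $q$ to verify strict concavity of $\Psi$ at $q=0$ and, globally, to exclude competing maxima at large $|q|$. This requires extending the determinant asymptotics used in \cite{pspin-second,subag-ofer-new} from the GOE setting to real elliptic ensembles, together with showing that in the stable case the non-Hermitian stability indicator contributes only subexponential corrections for $u<-\Einf$. Once these random-matrix estimates are established, the remaining steps---Laplace's method, handling of the $q=\pm 1$ endpoints, and passage from the moment bound to the convergence statement via Paley--Zygmund---are standard adaptations of the arguments in \cite{pspin-second,subag-ofer-new}.
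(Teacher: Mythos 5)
Your outline correctly reproduces the architecture of the exponential-scale argument (Kac--Rice for pairs, reduction to the overlap $q$, identification of the rate function, dominance of $q=0$), but it does not engage with what actually makes this theorem hard: the statement is at constant order, and a Laplace expansion governed by $\Psi''(0;u)<0$ only controls the exponential scale. The obstruction is the determinant correlation factor
\[
\Delta_N(r,u_1,u_2)=\frac{\E\bigl[\prod_{k=1,2}|\det(\bar M^k_N(r,u_1,u_2))|\bigr]}{\prod_{k=1,2}\E\bigl[|\det(A_{N-1}-\hat u_{N,k}I)|\bigr]},
\]
which is invisible to the rate function and is \emph{not} $1+o(1)$ uniformly in $r$: as $r\to 1$ it behaves like $\E[|\det(A_N-uI)|^2]/\E[|\det(A_N-uI)|]^2$, which grows polynomially in $N$ in the bulk. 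Since the Gaussian weight $e^{-Nr^2/2}$ only contributes a factor $\sqrt{2\pi/N}$ after integration, a polynomially large $\Delta_N$ at moderate $r$ could destroy the constant-order conclusion. The proof therefore requires (i) a quantitative moment bound $\E[|\det(A_N-xI)|^{\ell}]\le CN^{\ell(\ell-1)/4}\,\E[|\det(A_N-xI)|]^{\ell}$ for the elliptic ensemble in the bulk --- a genuinely new non-Hermitian input, obtained in the paper via the Pfaffian structure of the real correlation functions, and not obtainable from ``large-deviation asymptotics'' as you propose --- used to show $\Delta_N\le C\sqrt N$ and hence to kill the regime $C\sqrt{\log N/N}<|r|<\rho_N$; and (ii) a separate perturbative argument showing $\Delta_N(r,u_1,u_2)\le 1+o(1)$ uniformly for $|r|\le C\sqrt{\log N/N}$, which compares $\bar M^k_N$ to independent elliptic matrices via rank-two perturbations and smallest-singular-value (overcrowding) estimates. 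Your proposal contains neither ingredient.

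Relatedly, your account of where $p>9$ enters is off: it is not needed to suppress pair contributions at non-trivial overlaps (that is an exponential-scale fact valid for all $p\ge 3$), but rather in step (ii) above, where the probability that the smallest singular value of $A_{N-1}^k-\hat u_{N,k}I$ falls below the perturbation scale $|r|^{(p-3)/2}$ must be small enough to beat the polynomial moment bounds; this forces $p>9$ when $|r|\sim\sqrt{\log N/N}$. Finally, the invocation of Paley--Zygmund is misplaced here: the theorem \emph{is} the second-moment identity, and Paley--Zygmund/Markov only enter afterwards to deduce the concentration statement of Theorem \ref{theorem:main-theorem-weak}. Your treatment of the stable case (showing unstable points with $\lam<u<-\Einf$ are exponentially negligible) is consistent with the paper's Lemma \ref{lem:stable points}, but it too ultimately leans on the same second-moment bound for the characteristic polynomial that your proposal leaves unaddressed.
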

	
	\begin{remark}
		Note then when $\tau\in (0,\tauc]$, $\Einf\ge \Ezero$ so the claim (\ref{eqn:theorem:main2}) would be vacuous in this case.
	\end{remark}
	
	By a standard argument, (\ref{eqn:theorem:main1}) implies that in probability as $N\to \infty$
	\[\log(\Crit_{N}(u))-\log(\E[\Crit_{N}(u)])\To 0.\]
	In particular, we observe that (\ref{eqn:theorem:main1}) and Theorem \ref{theorem:fyodorov 1st moment weaker} immediately imply (\ref{eqn:theorem:weakmain1}). Similarly, the identity $\Sigma^{p,\tau}(\Einf)=\theta_{p,\tau}$ (which may be verified using (\ref{eqn:def:1st complexity function}) below) we can essentially obtain (\ref{eqn:theorem:weakmain2}) from (\ref{eqn:theorem:main2}), Theorem \ref{theorem: xavier 1st moment}, and Markov's inequality.
	
	Finally, we also show the following result on the exponential scale, though with less restrictions on $(p,\tau,u)$, obtained in the course of proving Theorem \ref{theorem:main-theorem constant order concentration bulk}.
	
	\begin{theorem}
		\label{theorem:main-theorem exponential order concentration}
		For any $|\tau|<1$, $p\ge 3$, and $u\in \R\cup\{-\infty\}$ we have that
		\[\lim_{N\to \infty}N^{-1}\log \left(\frac{\E[\Crit_N(u)^2]}{\E[\Crit_N(u)]^2}\right)=0.\]
	\end{theorem}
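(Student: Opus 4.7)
The lower bound $\liminf_{N\to\infty} N^{-1}\log(\E[\Crit_N(u)^2]/\E[\Crit_N(u)]^2)\ge 0$ is immediate from Jensen's inequality, so the content of the theorem is the matching upper bound. The plan is to express $\E[\Crit_N(u)^2]$ via the Kac-Rice formula, reduce by rotational invariance to a one-dimensional integral over the overlap $q=(x,y)_N$ of a pair of equilibria, and show that the exponential rate of growth of this integral is at most twice the first-moment rate. Concretely, one obtains
$$\E[\Crit_N(u)^2] = \omega_N \int_{-1}^{1}(1-q^2)^{(N-3)/2}\, J_N(q,u)\, dq,$$
where for any fixed pair $x,y\in \SN$ with $(x,y)_N=q$,
$$J_N(q,u) = \phi_N(q)\cdot \E\bigl[|\det D\b{F}(x)||\det D\b{F}(y)|\,\mathbf{1}\{\lam(x),\lam(y)<u\}\bigm|\b{F}(x)=\b{F}(y)=0\bigr],$$
$\phi_N(q)$ is the joint Gaussian density of $(\b{F}(x),\b{F}(y))$ at the origin, $D\b{F}$ denotes the covariant derivative along $\SN$, and $\omega_N$ is an explicit spherical-volume constant with $N^{-1}\log\omega_N\to 0$.

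Setting $\Sigma_u=\sup_{v<u}\Sigma^{p,\tau}(v)$, the goal reduces to showing
$$\limsup_{N\to\infty}N^{-1}\log\int_{-1}^{1}(1-q^2)^{(N-3)/2}\,J_N(q,u)\,dq \le 2\Sigma_u,$$
after which Theorem \ref{theorem:fyodorov 1st moment} closes the argument. I would split the $q$-integral into three regimes. Near $q=0$, the conditioning on $\b{F}(x)=\b{F}(y)=0$ asymptotically decouples, since the covariance structure in (\ref{eqn:model-def}) makes $\b{F}(x),\b{F}(y)$ and their derivatives independent at orthogonal points, so the integrand reproduces the square of the one-point Kac-Rice integrand to leading exponential order, yielding exactly $2\Sigma_u$. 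For $q$ bounded away from $\{0,\pm 1\}$, I would combine the explicit Gaussian factor $\phi_N(q)$, concentration of the Lagrange multipliers, and uniform bounds on the log-determinants to show the exponent is no larger than $2\Sigma_u+o(1)$. As $q\to\pm 1$, the Jacobian $(1-q^2)^{(N-3)/2}$ decays faster than any polynomial growth of $J_N$ allowed after a suitable truncation of the event $\{\lam(x),\lam(y)\in[-\Ezero,u]\}$, closing the boundary regime.

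The principal technical obstacle is the uniform exponential upper bound on the conditional expectation of the product of absolute determinants. Because $\b{F}$ is non-relaxational, $D\b{F}(x)$ is a non-Hermitian random matrix, and once we condition on the Kac-Rice event and on the values of $\lam(x),\lam(y)$, the two Hessians form a pair of shifted elliptic random matrices coupled through a low-rank deterministic perturbation coming from the two-point correlations. The needed input is a quantitative Brown-measure-type upper bound on $N^{-1}\E\log|\det(M_N-zI)|$ for such coupled shifted elliptic matrices that is continuous in the parameters and matches the single-point asymptotic underlying Theorem \ref{theorem:fyodorov 1st moment}. With this input in hand, combining the three regime estimates via Laplace's method yields the desired exponential upper bound on $\E[\Crit_N(u)^2]$ and hence the theorem.
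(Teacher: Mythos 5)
Your overall strategy is the same as the paper's: Jensen for the lower bound, the Kac--Rice formula reduced by isotropy to a one-dimensional integral over the overlap (Lemma \ref{lem:second-moment-exact}), exponential-scale asymptotics for the conditional product of absolute determinants of coupled shifted elliptic matrices (Theorem \ref{theorem:KR-2point}, proved via the regularized log-singular-value statistic and concentration), decoupling at overlap zero, and a Laplace/Varadhan argument. The needed random-matrix input you identify is exactly Lemma \ref{lem:main elliptic ensemble bound}. So the architecture is right.

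The genuine gap is in your treatment of the regimes $q$ bounded away from $0$ and $q\to\pm 1$. Your claim that ``the Jacobian $(1-q^2)^{(N-3)/2}$ decays faster than any polynomial growth of $J_N$'' is false: $J_N(q,u)$ grows \emph{exponentially} in $N$, not polynomially. The conditional density $\phi_N(q)$ itself contains the factor $(1-q^{2p-2})^{-(N-2)/2}$ (Lemma \ref{lem:conditioning-density-statement}), which blows up exponentially as $q\to\pm 1$ and combines with the volume factor to give only $e^{Nh(q)}\to(p-1)^{-N/2}$, while the conditional expectation of the product of determinants remains of order $e^{2N\phi_{\tau,p}(u)}$. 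The net exponential rate at the boundary is the finite quantity $\bar{\Sigma}_2^{p,\tau}(1,u)$, and the whole point is to prove $\sup_{r\in(-1,1]}\Sigma_2^{p,\tau}(r,u,u)\le 2\sup_{v\le u}\Sigma^{p,\tau}(v)$, with the supremum attained only at $r=0$ (for $|u|\le\uth$) or at $r=1$ where the value is negative (for $|u|>\uth$). This is the content of Lemmas \ref{lem:diagonal-lemma-old}, \ref{lem:sup-iden} and \ref{lem:analytic equality result}: it requires the concavity argument in $u_1-u_2$ to reduce to the diagonal, the monotonicity of the correlation term $g^{p,\tau}(r)$ in $r$, the identification of the threshold $\uth$, and a reduction in $\tau$ to the extreme cases $\tau=1$ and $\tau=-(p-1)^{-1}$ (the former inherited from the nontrivial calculus of \cite{pspin-second,subag-ofer-new}). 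Your proposal asserts the conclusion of this comparison (``the exponent is no larger than $2\Sigma_u+o(1)$'') without any mechanism for proving it, and the mechanism you do offer for the boundary regime does not work; this analytic comparison, not the determinant asymptotics alone, is the heart of the theorem.
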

	
	We note that Theorem \ref{theorem:main-theorem constant order concentration bulk} actually establishes a much stronger concentration result than is used by Theorem \ref{theorem:main-theorem-weak}. Indeed, we show that $N^{-1}\log(\Crit_{N}(u))$ fluctuates on the scale $O(N^{-1})$, while we only need a result of order $o(1)$ to establish the transition at $\tauc$. Moreover, it naively appears that one would only need a result like Theorem \ref{theorem:main-theorem exponential order concentration} to establish this level of concentration. To support this, in the relaxational case, analogues of Theorem \ref{theorem:main-theorem-weak} can be used to compute the potential function's minimum value (when suitably normalized), as discussed in \cite{pspin-second,kivimae}. This however, relies on knowing that the minimum value concentrates very quickly by the Borell-TIS inequality. If a similar general result were known for $N^{-1}\log(\Crit_{N}(u))$, Theorem \ref{theorem:main-theorem exponential order concentration} would be sufficient to establish the transition. 
	
	However, while such a result is widely believed to hold, no progress to establish it has yet been made. Moreover, in many of the applications of concentration of complexity listed above, concentration of $N^{-1}\log(\Crit_{N}(u))$ on the order $o(1)$ would not suffice, so stronger estimates like Theorem \ref{theorem:main-theorem constant order concentration bulk} seem to still be needed.
	
	\subsection{Methods}
	
	We will now discuss the methods we use to show our main result, with a more complete outline being the focus of Section \ref{section:outline-exponential}. The starting point will be the Kac-Rice formula, which will give us an explicit representation of $\E[\Crit_{N}(u)^2]$ (see Lemma \ref{lem:second-moment-exact}). The primary difficulty in analyzing this expression is a term involving the product of absolute determinants of a pair of conditional Jacobians.
	
	The corresponding expression for the first moment in \cite{complexity-fyodorov-non-gradient} expresses $\E[\Crit_{N}(u)]$ as a fixed multiple of the expected absolute determinant of a matrix sampled from the elliptic ensemble (recalled below in Definition \ref{defin:GEE}), up to a random multiple of the identity. Fixing this multiple, the problem of computing the first moment is reduced to producing asymptotics for the expected absolute characteristic polynomial of the elliptic ensemble.
	
	To compute the second moment however, we must deal with both the effect the conditioning has on the Jacobian, as well as the effect of the correlations between the Jacobians themselves. The first effectively augments the law of the Jacobians by a low-rank perturbation, so we are reduced to the problem of computing the expected characteristic polynomial for perturbations of a pair of correlated matrices both sampled from the elliptic ensemble.
	
	On the exponential scale, we do this by approximating the characteristic polynomial by the exponential of a more well-behaved linear statistic. Not only does this make the characteristic polynomial more stable under perturbation, but using suitable concentration estimates, we may show that this statistic may be replaced by its expectation, which is insensitive to correlations. In the symmetric case this method was explored quite fully in the work of Ben Arous, Bourgade and McKenna \cite{exponential}, who showed how one may employ this method under a fairly mild set of conditions. We follow this method, which entails establishing and modifying a variety of results from symmetric matrix theory to the non-symmetric setting, where the eigenvalues are less stable under perturbation. This will be the focus of Section \ref{section:elliptic-ensemble-background}. Many of our results are given specifically for the elliptic ensemble, but essentially only rely on concentration estimates for the matrix and rates of convergence for the expected empirical measures of shifted singular values. As such, we expect our general set-up should readily apply to most non-symmetric matrices for which the Kac-Rice formula has currently been applied.
	
	To obtain estimates on the determinant term to vanishing order, as needed for the proof of Theorem \ref{theorem:main-theorem constant order concentration bulk}, will be a more difficult task. The key problem is that in this case the correlations between the Jacobians are no longer negligible, as indeed the fluctuations of the determinant are typically on the order of a polynomial factor. Thus the primary difficulty is to control these fluctuations, for which our main tool will be a moment bound for the determinant term which is tight up to a constant. An important technical input for this will be the computation of the higher moments for the absolute characteristic polynomial of the elliptic ensemble up to constant order, which is obtained in our companion paper \cite{PaxComp}.
	
	Finally we comment on our restrictions on $(p,\tau)$. We believe one could relax the assumptions of Theorem \ref{theorem:main-theorem-weak} in sight of Theorem \ref{theorem:main-theorem exponential order concentration}. The restriction on $p>9$ is needed only in the proof of Lemma \ref{lem:delta-estimates-near-zero} and is similar in nature to the restriction to $p\ge 32$ in \cite{subag-ofer-new}, though removing it is beyond the scope of our methods. The primary reason our proof holds for this lower value of $p$ is that the characteristic polynomial when $\tau<1$ has weaker fluctuations then in the symmetric case, as well as the related increase in the typical spacing of eigenvalues from $N^{-1}$ in the symmetric case to $N^{-1/2}$.
	
	While we believe our methods could be easily generalized to the case $\tau=0$, significant changes would be needed to include $\tau<0$. The first problem is that while the largest real eigenvalue of the elliptic ensemble is typically $1+\tau$, the largest singular value is typically $1+|\tau|$. This renders a large portion of Section \ref{section:elliptic-ensemble-background} useless, as the largest singular value no longer accurately detects when a shift by the identity is positive definite. Moreover, we are unsure of how to obtain any constant order asymptotics for the higher moments of the absolute characteristic polynomial of the elliptic ensemble in this case.
	
	\subsection{Structure}
	
	We begin the paper with Section \ref{section:outline-exponential}, which outlines the general argument and reduces the proofs of our main results to a series of more technical results proven in the later sections.
	
	In Section \ref{section:KR-2 point-intro} we will provide our application of the Kac-Rice formula to the second moment, and give results which describe the structure of the conditional Jacobian at two points. In Section \ref{section:KR-2 point-old}, we compute the leading order asymptotics for this formula, which is given by Theorem \ref{theorem:KR-2point}. These sections primarily follow the standard structure of the Kac-Rice formalism, as in \cite{complexity-fyodorov-non-gradient,complexity-xavier,pspin-one,pspin-second,bipartite}. In particular, we tried to structure these sections similarly to Sections 4 and 5 of \cite{pspin-second}, so as to assist readers familiar with this work. 
	
	We then refine our results to the constant order in Section \ref{section:theorem-O1-section}, where we prove the tighter asymptotics for the correlated determinants mentioned above. Then in Section \ref{section:lemmas about functions} we prove some analytic results about the limiting complexity functions. 
	
	Section \ref{section:elliptic-ensemble-background} contains the results we need to treat the characteristic polynomial on the exponential scale, as well as outside of the bulk. The methods are quite general, and we expect them to apply to many non-symmetric matrix ensembles which are suitably well behaved (as in the sense of \cite{exponential} in the symmetric case), and are sufficient to obtain an exponential order results like Theorem \ref{theorem:main-theorem exponential order concentration}. The treatment of the characteristic polynomial in the bulk is more complicated, and is obtained in the companion paper \cite{PaxComp}, and is required to obtain the more delicate Theorem \ref{theorem:main-theorem constant order concentration bulk}.
	
	Finally, we provide two Appendices. Appendix \ref{appendix:covariance-computation} contains the majority of the raw technical computations involving the covariance structure, as well as the verification of a few miscellaneous results involving them. In Appendix \ref{appendix:Kac-Rice-Proof} we give the exact form of the Kac-Rice formula we use, which is a modification of the results in Chapter 6 of \cite{azais}.
	
	\subsection{Notation}
	
	We will need some notation involving matrices. For an $n$-by-$n$ matrix $A$ we will denote the (algebraic) eigenvalues of $A$ as
	$\lam_1(A),\dots, \lam_n(A)$, ordered lexicographically, so that $\Re(\lam_{i}(A))\le \Re(\lam_{i+1}(A))$ and if $\Re(\lam_{i}(A))= \Re(\lam_{i+1}(A))$ we also have that $\Im(\lam_{i}(A))\le \Im(\lam_{i+1}(A))$. We will denote its singular values as $0\le s_n(A)\le \dots \le s_1(A)$. We will denote the empirical (spectral) measure for the eigenvalues of $A$ by
	\[\mu_A=\frac{1}{n}\sum_{i=1}^n \delta_{\lam_i(A)}.\]
	In addition, for any choice of $z\in \R$ and $n$-by-$n$ matrix $A$ we define the empirical measure for the singular values of $A-zI$ by
	\[\nu_{A,z}=\frac{1}{n}\sum_{i=1}^n \delta_{s_i(A-zI)},\]
	and $\nu_{A}=\nu_{A,0}$. We will use the notation $\|A\|$ to denote the Frobenius norm on matrices (i.e. $\|A\|=\sqrt{\Tr(A^TA)}$).

	We now precisely define our counts. For a Borel subset $B\subseteq \R$ define
	\[\Crit_N(B)=\#\{\sigma \in \SN:\b{F}(\sigma)=0,\lam(\sigma)\in B\},\]
	\[\Crit_{N,st}(B)=\#\{\sigma \in \SN:\b{F}(\sigma)=0,\lam(\sigma)\in B, \Re(\lam_N(\D \b{F}(\sigma))\le  0\},\]
	where $\D \b{F}(\sigma)$ denotes the Jacobian at $\sigma$ taken with respect to the standard Riemannian metric on $\SN$.
	
	We observe that $\Crit_N(u)=\Crit_N((u,\infty))$ and $\Crit_{N,st}(u)=\Crit_{N,st}((u,\infty))$, and will henceforth abandon the notations $\Crit_N(u)$ and $\Crit_{N,st}(u)$ and instead work with $\Crit_{N}(B)$ and $\Crit_{N,st}(B)$. Next, for Borel subsets $B\subseteq \R$ and $I\subseteq [-1,1]$, we define
	\[\Crit_{N,2}(B,I)=\#\{(\sigma,\tau)\in \SN^2:\b{F}(\sigma)=\b{F}(\tau)=0,\lam(\sigma),\lam(\tau)\in B, (\sigma,\tau)_N\in I\}.\]
	We note that $\Crit_{N,2}(B,[-1,1])=\Crit_N(B)^2$. We will say a subset of $\R$ is nice if it is a finite union of open intervals, and similarly we will say a subset of $[-1,1]$ is nice if it is the intersection of a nice subset of $\R$ with $[-1,1]$.
	
	We now define our complexity functions. For $|\tau|<1$ we define the ellipse $\cal{E}_\tau\subseteq \C$
	\[\cal{E}_\tau=\left\{x+iy\in \C:\frac{x^2}{(1+\tau)^2}+\frac{y^2}{(1-\tau)^2}\le 1\right\},\label{eqn:ellipse}\]
	and denote the logarithmic potential of the uniform measure on this ellipse as
	\[\phi_\tau(u)=\frac{1}{(1-\tau^2)\pi}\int_{\cal{E}_\tau}\log(|x+iy-u|)dxdy.\]
	For $\tau=1$, we define
	\[\phi_1(u)=\lim_{\tau\to 1}\phi_\tau(u)=\frac{1}{2\pi}\int_{-2}^2\sq{4-x^2}\log(|x-u|)dx,\]
	the logarithmic potential of the standard semicircle law. It is well known that (see (4.1) of \cite{integral-on-ellipse})
	\[\phi_\tau(u)=\frac{u^2}{2(1+\tau)}-\frac{1}{2}-I_\tau(u)I(|u|\ge 1+\tau),\]
	\[I_\tau(u)=\begin{cases}\frac{1}{2(1+\tau)}u^2-\frac{|u|(|u|-\sq{u^2-4\tau})}{4\tau}-\log(\frac{|u|+\sq{u^2-4\tau}}{2});\;\;\tau\neq 0\\ -\log u+\frac{1}{2}u^2-\frac{1}{2};\;\;\tau=0\end{cases}.\]
	It will be convenient to define a rescaling of this as well
	\[\phi_{\tau,p}(u)=\phi_\tau\left(\frac{u}{\sq{p(p-1)}}\right).\]
	With this notation the annealed complexity function is given as
	\[\Sigma^{p,\tau}(u)=\frac{1}{2}+\frac{1}{2}\log(p-1)-\frac{u^2}{2p\alpha}+\phi_{\tau,p}\left(u\right)\label{eqn:def:1st complexity function}\]
	where here and elsewhere $\alpha=1+\tau(p-1)$. For the two-point complexity function, we first define an auxiliary function
	\[h(r):=\frac{1}{2}\log \left(\frac{1-r^2}{1-r^{2p-2}}\right),\label{def:h-def}\]
	and define
	\[\Sigma_{2}^{p,\tau}(r,u_1,u_2)=1+\log(p-1)+h(r)-\frac{1}{2}(u_1,u_2)\Sigma_{U}(r)^{-1}\begin{bmatrix}u_1\\ u_2 \end{bmatrix}+\phi_{\tau,p}\left(u_1\right)+\phi_{\tau,p}\left(u_2\right),\]
	where $\Sigma_U(r)=\Sigma_{U,p,\tau}(r)$ is a certain 2-by-2 matrix-valued function defined in (\ref{eqn:def:U-external field form}) below.
	
	Finally, we define the key classical ensemble occurring in this model, namely the (Gaussian) real elliptic ensemble.
	\begin{defin}
		\label{defin:GEE}
		For $-1\le \tau\le 1$ we will say a $N$-by-$N$ random matrix $A_N$ belongs to the real elliptic ensemble $GEE(\tau,N)$ if, when considered as a vector in $\R^{N^2}$, $A_N$ is a centered Gaussian random vector whose covariance is given by
		\[\E[A_{ij}A_{kl}]=\frac{1}{N}(\delta_{ik}\delta_{jl}+\tau \delta_{il}\delta_{jl}).\]
	\end{defin}
	
	The limiting law for the empirical measure (when $|\tau|<1$) for the elliptic ensemble was first identified by Girko \cite{girko1,girko2} as the uniform measure on the ellipse $\cal{E}_\tau$ defined above. More specifically, it was shown in \cite{spec-anti} (see as well \cite{naumov}) that for $A_N$ sampled from $GEE(\tau,N)$, we have as $N\to \infty$
	\[\mu_{A_N}\To \frac{1}{(1-\tau^2)\pi }I_{\cal{E}_\tau}\label{eqn:elliptic law}\]
	in probability, where $I_{\cal{E}_\tau}$ denotes the indicator function of the ellipse defined in (\ref{eqn:ellipse}). When $\tau=0$ this recovers the circular law, and when $\tau\to 1$ we recover the semi-circular law.
	
	\section{Outline of The Proofs for Theorems \ref{theorem:main-theorem constant order concentration bulk} and \ref{theorem:main-theorem exponential order concentration} \label{section:outline-exponential}}
	
	In this section, we outline the general structure of the proof of our main results. We begin by giving the exact expressions for the first two moments of $\Crit_N(B)$ and then introduce Theorem \ref{theorem:KR-2point}, which concerns the computation of $\E[\Crit_N(B)^2]$ up to the exponential scale. We then provide some analytic results about the resulting complexity functions, from which we derive Theorem \ref{theorem:main-theorem exponential order concentration}.
	
	After this we will proceed to the proof of Theorem \ref{theorem:main-theorem constant order concentration bulk}. Our first step will be to reduce this proof to Proposition \ref{prop:constant order concentration bulk}, which concerns the contribution of points which are roughly orthogonal. This makes the computation at this scale possible, as the components appearing in the Kac-Rice formula become roughly independent. In particular, we are able to prove Theorem \ref{theorem:main-theorem constant order concentration bulk} through a series of estimates for the absolute determinant term in the Kac-Rice formula.
	
	Our exact formula for $\E[\Crit_{N,2}(B,I)]$ which will follow from a standard application of the Kac-Rice formula. This expresses $\E[\Crit_{N,2}(B,I)]$ as an integral over $(\sigma,\sigma')\in\SN\times \SN$, though as the law of $\b{F}$ is isotropic, we will reduce this to an integral depending only on $r=(\sigma,\sigma')_N$ by the co-area formula. 
	
	\begin{lemma}
		\label{lem:second-moment-exact}
		Let $(M_N^k(r))_{k=1,2}$ and $(U^k_N(r))_{k=1,2}$ denote the random variables given in Lemma \ref{lem:jacobian-covariance-statement-no-energy}. These are, respectively, a pair of $(N-1)$-by-$(N-1)$ random matrices and a pair of random scalars with marginal law given by $(U^1_N(r),U^2_N(r))\disteq \cal{N}(0,N^{-1}\Sigma_U(r))$ where $\Sigma_U(r)$ is given by (\ref{eqn:def:U-external field form}). Then for nice $B\subseteq \R$ and nice $I\subseteq (-1,1)$
		\[\E[\Crit_{N,2}(B,I)]=\]\[C_N \int_I\left(\frac{1-r^2}{1-r^{2p-2}}\right)^{\frac{N-2}{2}}\cal{G}(r)\E \left[\prod_{k=1,2}|\det(M^k_N(r))|I(U^k_N(r)\in B)\right]dr.\label{eqn:intro-second-moment}\]
		where here $\cal{G}(r):=(1-r^2)^{-1/2}(1-(r^p-\tau (p-1)r^{p-2}(1-r^2))^2)^{-1/2}$ and
		\[C_N=\vol(S^{N-1})\vol(S^{N-2})\left((N-1)(2\pi )^{-1}(p-1)\right)^{(N-1)},\]
		where $\vol(S^{\ell-1})$ denotes the surface area of the $(\ell-1)$-dimensional unit sphere
		\[\vol(S^{\ell-1})=\frac{2\pi^{\ell/2}}{\Gamma(\ell/2)}.\]
	\end{lemma}
	
	This will be proven in Section \ref{section:KR-2 point-intro}, based on the form of the Kac-Rice formula derived in Appendix \ref{appendix:Kac-Rice-Proof}. From this expression, we see that the primary obstacle for obtaining asymptotics for $\E[\Crit_{N,2}(B,I)]$ is understanding the term
	\[\E \left[\prod_{k=1,2}|\det(M^k_N(r))|I(U^k_N(r)\in B)\right].\label{eqn:ignore-75}\]
	To express how this is done, it will be convenient to recall the corresponding exact expression for the annealed complexity, which occurs as Theorem 3.1 of \cite{complexity-fyodorov-non-gradient}.
	
	\begin{lemma}
		\label{lem:first-moment-exact}
		Let $A_{N-1}$ be sampled from $GEE(\tau,N-1)$ and let $X_N$ be an independent centered Gaussian random variable with variance $p\alpha/N$. Then for nice $B\subseteq \R$ we have that
		\[\E[\Crit_N(B)]=\bar{C}_N\E\left[|\det\left(A_{N-1}-\hat{X}_NI\right)|I(X_N \in B)\right],\]
		where here
		\[\hat{X}_N=\left(\frac{N}{(N-1)p(p-1)}\right)^{1/2}X_N,\]
		\[\bar{C}_N=\vol(S^{N-1})\left((N-1)(2\pi)^{-1}(p-1)\right)^{(N-1)/2}.\]
	\end{lemma}
	
	Conditioning on the value of $X_N$, we see that the computation of $\E[\Crit_N(B)]$ is similarly reduced to the computation of the expected value of the absolute characteristic polynomial of a matrix sampled from $GEE(\tau,N-1)$. To relate this to the term relevant to the second moment, we define $(\bar{M}^k_N(r,u_1,u_2))_{k=1,2}$, to be $M^k_N(r)$ conditioned on the event $(U^1_N(r)=u_1,U^2_N(r)=u_2)$. We will show in Section \ref{section:theorem-O1-section} that there are rank-$2$ matrices $E_N^k(r,u_1,u_2)$ such that we may write
	\[(\bar{M}_N^k(r,u_1,u_2))_{k=1,2}\disteq (A_{N-1}^k(r)-\hat{u}_{k,N}I+E_N^k(r,u_1,u_2))_{k=1,2}\label{eqn:decom-E}\]
	where $(A_{N-1}^1(r),A_{N-1}^2(r))$ are a pair of correlated $GEE(\tau,N-1)$ matrices and
	\[\hat{u}_{k,N}:=\left(\sq{\frac{N}{(N-1)p(p-1)}}\right)u_k.\]
	Thus to compare the first and second moments of $\Crit_N(B)$, we want to show that the contribution to the determinant coming from the perturbation term $E_N^k(r,u_1,u_2)$, as well as the effect of the correlations between the matrices $(A_{N-1}^1(r),A_{N-1}^2(r))$, are both negligible.
	
	On the exponential scale, which is sufficient to obtain the leading order term of $\E[\Crit_{N,2}(B,I)]$, this will follow by replacing the determinant with a suitably regularized analog. For brevity, we will only obtain an upper bound for the leading order behavior, as it is all we will need to demonstrate concentration. The corresponding lower bound may be obtained by applying the methods developed in \cite{exponential}. The following computation will be our main result on this scale, whose proof will be the focus of Section \ref{section:KR-2 point-old}.
	\begin{theorem}
		\label{theorem:KR-2point}
		For nice $B\subseteq \R$, and nice $I\subseteq (-1,1)$, we have that
		\[\limsup_{N\to \infty}N^{-1}\log(\E[\Crit_{N,2}(B,I)])\le \sup_{u_1,u_2\in B,r\in I}\Sigma_{2}^{p,\tau}(r,u_1,u_2).\]
	\end{theorem}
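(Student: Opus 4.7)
The plan is to begin from the exact Kac--Rice expression in Lemma \ref{lem:second-moment-exact} and reduce the asymptotics to a Laplace-type integral whose exponent is $\Sigma_2^{p,\tau}(r,u_1,u_2)$. First I would condition on the values $(U^1_N(r),U^2_N(r))=(u_1,u_2)$ using their joint Gaussian density, which replaces $M_N^k(r)$ by $\bar{M}_N^k(r,u_1,u_2)$ and pulls out a factor
\[\frac{N}{2\pi\sq{\det \Sigma_U(r)}}\exp\left(-\frac{N}{2}(u_1,u_2)\Sigma_U(r)^{-1}\begin{bmatrix}u_1\\u_2\end{bmatrix}\right).\]
Stirling's formula applied to $C_N$ produces the constants $1+\log(p-1)$ in the exponent, the factor $((1-r^2)/(1-r^{2p-2}))^{(N-2)/2}$ contributes $Nh(r)$, and $\cal{G}(r)$ together with determinantal prefactors is sub-exponential. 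Thus the entire task reduces to showing that
\[\frac{1}{N}\log \E\left[\prod_{k=1,2}|\det(\bar{M}_N^k(r,u_1,u_2))|\right]\to \phi_{\tau,p}(u_1)+\phi_{\tau,p}(u_2),\]
uniformly on compact sets of $(r,u_1,u_2)\in I\times B\times B$ bounded away from $r=\pm 1$.

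Next, I would use the decomposition (\ref{eqn:decom-E}) to write $\bar{M}_N^k=A^k_{N-1}(r)-\hat{u}_{k,N}I+E_N^k$ where $E_N^k$ has rank at most $2$. Since a rank-$2$ multiplicative perturbation contributes only sub-exponentially to $\log|\det|$ (provided the singular values of the unperturbed matrix are not too concentrated near zero, a control that follows from the elliptic-ensemble estimates of Section \ref{section:elliptic-ensemble-background}), one is reduced to estimating
\[\E\left[\prod_{k=1,2}\left|\det\left(A^k_{N-1}(r)-\hat{u}_{k,N}I\right)\right|\right].\]

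The heart of the argument is then to decouple the correlated pair $(A^1_{N-1}(r),A^2_{N-1}(r))$ on the exponential scale. Following the regularization strategy outlined in the introduction, I would replace the log-characteristic polynomial by a smoothed linear statistic $(N-1)\int f_\e(\lam)\,d\mu_{A^k_{N-1}(r)}(\lam)$ in which the logarithmic singularity at $\lam=\hat u_{k,N}$ is truncated at a polynomially small scale. The concentration results for linear statistics of the elliptic ensemble (also from Section \ref{section:elliptic-ensemble-background}) show that each such statistic is exponentially close to its expectation, with no dependence on the joint law; since the marginals of $A^k_{N-1}(r)$ are each $GEE(N-1,\tau)$, the limiting value of the expectation is the logarithmic potential $\phi_{\tau,p}(u_k)$ of the elliptic law. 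One then removes the truncation using a priori bounds on the smallest singular value of $A^k_{N-1}(r)-\hat u_{k,N} I+E_N^k$, again supplied by Section \ref{section:elliptic-ensemble-background}.

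Finally, assembling these pieces shows that the integrand in (\ref{eqn:intro-second-moment}), after conditioning on $(u_1,u_2)$, has the form $\exp(N\Sigma_2^{p,\tau}(r,u_1,u_2)+o(N))$ uniformly on compact subsets of $I\times B\times B$ with $r$ bounded away from $\pm 1$; a separate polynomial bound for the conditional determinant handles any unboundedness of $B$, and the divergence of $h(r)$ as $r\to\pm 1$ handles the boundary of $I$. Varadhan's lemma then delivers the supremum. The main obstacle I anticipate is the decoupling step: eigenvalues of the elliptic ensemble are much less stable under low-rank perturbation than in the symmetric case, so controlling the smallest singular value of $A^k_{N-1}(r)-\hat u_{k,N}I+E_N^k$ uniformly and establishing the concentration of the regularized log-determinant outside the bulk both require the elliptic-specific estimates that make up Section \ref{section:elliptic-ensemble-background}.
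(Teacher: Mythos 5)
Your overall architecture --- Kac--Rice, a Laplace/Varadhan analysis in $(r,u_1,u_2)$, and decoupling the correlated determinants by replacing the log-determinant with a regularized linear statistic whose exponential moments are insensitive to the correlations --- matches the paper's. However, two concrete steps as written would fail. First, you propose to regularize and concentrate the linear statistic of the eigenvalue empirical measure $\mu_{A^k_{N-1}(r)}$. For non-normal matrices, linear eigenvalue statistics are not Lipschitz functions of the matrix entries, so the log-Sobolev/Herbst machinery that underlies the entire decoupling step is unavailable for $\mu_A$. This is precisely why the paper works instead with the singular-value measures $\nu_{A,z}$ of $A-zI$, via $|\det(A-zI)|=\exp\bigl(n\int_0^\infty\log(x)\,\nu_{A,z}(dx)\bigr)$, for which Lemmas \ref{lem:lipschitz-norm}, \ref{lem:log-sob bound} and \ref{lem:main elliptic ensemble bound} apply. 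You acknowledge the instability of eigenvalues at the end of your proposal, but the statistic you actually propose to concentrate is the eigenvalue one.

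Second, your handling of the boundary of $I$ rests on a false premise: $h(r)=\tfrac12\log\bigl(\tfrac{1-r^2}{1-r^{2p-2}}\bigr)$ does \emph{not} diverge as $r\to\pm1$; it tends to $-\tfrac12\log(p-1)$. Meanwhile $\cal{G}(r)\asymp(1-r^2)^{-1}$ is not integrable near $\pm1$, so it cannot be absorbed as a sub-exponential prefactor followed by taking the supremum of the integrand. The integral is finite only because the last row and column of the conditional Jacobian degenerate as $r\to\pm1$: the variances of $W^k_N(r)$ and $S^k_N(r)$ are $O(1-r^2)$ (Lemma \ref{lem:bound of edge elements, chi-stuff}), and the deterministic block inequality of Lemma \ref{lem:upper bound using G and W} is designed precisely to extract a factor $\sq{1-r^2}$ per matrix from the determinant, cancelling $\cal{G}(r)$. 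Your proposal contains no mechanism producing this cancellation. Relatedly, your rank-two perturbation bound for the upper bound requires a lower bound on $s_{N-1}(A^k_{N-1}(r)-\hat u_{k,N}I)$, which holds only off an event of polynomially small probability (Lemma \ref{lem:naomov smallest eigenvalue bound}); on the exponential scale this is patchable by a moment bound on the regularized determinant, but note that the paper's upper bound sidesteps the issue entirely because Lemma \ref{lem:upper bound using G and W} requires no smallest-singular-value input.
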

	
	We note that with Theorems \ref{theorem:fyodorov 1st moment} and \ref{theorem:KR-2point}, the proof of Theorem \ref{theorem:main-theorem exponential order concentration} is reduced to a comparison of the resulting complexity functions. To begin we need the following result which allows us to restrict our attention to points where $u_1=u_2$.
	\begin{lemma}
		\label{lem:diagonal-lemma-intro}
		For nice $B\subseteq \R$ that intersects $(-\Ezero,\Ezero)$ we have that
		\[\limsup_{N\to\infty}N^{-1}\log(\E[\Crit_{N}(B)^2])\le \sup_{u\in B,r\in (-1,1)}\Sigma_2^{p,\tau}(r,u,u).\]
	\end{lemma}
	
	This result will be proven in Section \ref{section:lemmas about functions}. We will also need the following purely analytic result, also proven in Section \ref{section:lemmas about functions}.
	
	\begin{lemma}
		\label{lem:analytic equality result}
		Fix a nice subset $B\subseteq \R$ which intersects $(-\Ezero,\Ezero)$ and further fix a nice subset $I\subseteq (-1,1)$. Then $0\in \bar{I}$ if and only if
		\[\sup_{v\in B ,r\in I }\Sigma_{2}^{p,\tau}(r,v,v)=2\sup_{v\in B}\Sigma^{p,\tau}(v),\]
		with equality replaced by $<$ otherwise. 
	\end{lemma}
	
	\begin{proof}[Proof of Theorem \ref{theorem:main-theorem exponential order concentration}]
		By Jensen's inequality we only need to show that
		\[\limsup_{N\to \infty}N^{-1}\log \left(\frac{\E[\Crit_N(u)^2]}{\E[\Crit_N(u)]^2}\right)\le 0.\label{eqn:gnore-1113}\]	
		By Lemmas \ref{lem:diagonal-lemma-intro} and \ref{lem:analytic equality result}, we see that
		\[\limsup_{N\to \infty}N^{-1}\log \E[\Crit_N(u)^2]\le 2\sup_{v>u}\Sigma^{p,\tau}(v).\]
		Combined with Theorem \ref{theorem:fyodorov 1st moment}, this demonstrates (\ref{eqn:gnore-1113}).
	\end{proof}
	
	We now turn to the proof of Theorem \ref{theorem:main-theorem constant order concentration bulk}, which concerns the refinement of our results to the scale $1+o(1)$. In particular, we must return our attention to understanding (\ref{eqn:ignore-75}) now up to a term of vanishing order. Even considering this up to a $O(1)$-term requires significantly more care, as we are now in the regime in which the correlations of $(A^1_{N-1}(r),A^2_{N-1}(r))$ may not only contribute non-trivially but may even diverge in $N$. To illustrate this, note that if $u=\hat{u}_{1,N}=\hat{u}_{2,N}$ then as $r\to 1$, this amounts to understanding the ratio
	\[\frac{\E[|\det(A_N-uI)|^2]}{\E[|\det(A_N-uI)|]^2}.\]
	In the case where $\tau\in (0,1)$ and $|u|<1+\tau$ it follows from the results of \cite{PaxComp} that this quantity grows like $\sqrt{N}$. 
	
	To deal with this, we first remove a number of negligible subsets of points from our counts. We show below (see Remark \ref{remark:strict increasing of phi}) that $\Sigma^{p,\tau}$ is strictly increasing on $(-\infty,0]$ and strictly decreasing on $[0,\infty)$. Using Lemmas \ref{lem:diagonal-lemma-intro} and \ref{lem:analytic equality result} we see for any $0\le u\le \Ezero$ and $0<\epsilon<\Ezero-u$,
	\[\limsup_{N\to \infty}N^{-1}\log(\E[\Crit_{N}((u+\epsilon,\infty))^2])\le 2\Sigma^{p,\tau}(u+\epsilon)<2\Sigma^{p,\tau}(u).\]
	However, using Theorem \ref{theorem:fyodorov 1st moment}, we have that
	\[2\Sigma^{p,\tau}(u)=\lim_{N\to \infty}N^{-1}\log(\E[\Crit_{N}((u,u+\epsilon))]^2)\le \liminf_{N\to \infty}N^{-1}\log(\E[\Crit_{N}((u,u+\epsilon))^2]).\]
	In particular, we conclude that
	\[\limsup_{N\to \infty}N^{-1}\log(\E[\Crit_{N}((u+\epsilon,\infty))^2])<\liminf_{N\to \infty}N^{-1}\log(\E[\Crit_{N}((u,u+\epsilon))^2]).\label{eqn:ignore-c-2}\]
	A similar argument shows that for $u<0$ we have that
	\[\limsup_{N\to \infty}N^{-1}\log(\E[\Crit_{N}((u,\infty)\setminus(-\epsilon,\epsilon))^2])<\liminf_{N\to \infty}N^{-1}\log(\E[\Crit_{N}((-\epsilon,\epsilon))^2]).\label{eqn:ignore-c-3}\]
	
	On the other hand, by Theorem \ref{theorem:KR-2point} and Lemmas  \ref{lem:analytic equality result} and \ref{lem:diagonal-lemma-old} we see that if $u\in (0,\Ezero)\setminus \{\Einf\}$ then for sufficiently small $\epsilon>0$ and $\rho>0$ we have that
	\[\limsup_{N\to \infty}N^{-1}\log(\E[\Crit_{N,2}((u,u+\epsilon),(-\rho,\rho)^c)])<2\Sigma^{p,\tau}(u),\]
	so as above we have that
	\[\limsup_{N\to \infty}N^{-1}\log(\E[\Crit_{N,2}((u,u+\epsilon),(-\rho,\rho)^c)])<\liminf_{N\to \infty}N^{-1}\log(\E[\Crit_N((u,u+\epsilon))]^2),\label{eqn:ignore-c-1}\]
	and similarly if $u<0$ for small $\epsilon>0$ we have that
	\[\limsup_{N\to \infty}N^{-1}\log(\E[\Crit_{N,2}((-\epsilon,\epsilon),(-\rho,\rho)^c)])<\liminf_{N\to \infty}N^{-1}\log(\E[\Crit_{N}((-\epsilon,\epsilon))^2]).\]
	
	In particular, we see that the dominant contribution to $\E[\Crit_N((u,\infty)^2)]$ comes from pairs of equilibria which are roughly orthogonal and which have roughly the same Lagrange multiplier. It is for these points that we compute the contribution up to a vanishing term.
	
	\begin{proposition}
		\label{prop:constant order concentration bulk}
		Let us assume that $\tau\in(0,1)$, $p>9$, and let us fix $u\in \R$ such that $u\neq \pm \Einf$. Then for any choice of $\rho_N>0$ and $a_N<b_N$ such that $\rho_N\to 0$ and $a_N,b_N\to u$
		\[\limsup_{N\to \infty}\frac{\E[\Crit_{N,2}((a_N,b_N),(-\rho_N,\rho_N))]}{\E[\Crit_N((a_N,b_N))]^2}\le 1.\]
	\end{proposition}

	Before discussing the proof of Proposition \ref{prop:constant order concentration bulk}, we first derive Theorem \ref{theorem:main-theorem constant order concentration bulk} as a consequence of it, and one final result, proven in Section \ref{section:theorem-O1-section}, which shows that the dominant contribution to $\E[\Crit_N((u,\infty))]$ for $u>\Einf$ comes from stable points.
	
	\begin{lemma}
		\label{lem:stable points}
		Assume $\tau\in (0,1)$. Then for any $u>\Einf$, we have that
		\[\limsup_{N\to \infty}N^{-1}\log \left(\frac{\E[\Crit_{N}((u,\infty))-\Crit_{N,st}((u,\infty))]}{\E[\Crit_{N}((u,\infty))]}\right)<0.\]
	\end{lemma}

	\begin{proof}[Proof of Theorem \ref{theorem:main-theorem constant order concentration bulk}]
		We first prove (\ref{eqn:theorem:main1}). By Jensen's inequality it suffices to show that
		\[\limsup_{N\to \infty}\frac{\E[\Crit_N((u,\infty))^2]}{\E[\Crit_N((u,\infty))]^2}\le 1.\label{eqn:ignore-pen}\]
		We will show this in the case that $0\le u<\Ezero$, with the other case of $u<0$ being similar. For this we will use the following elementary lemma which we state without proof.
		\begin{lemma}
			\label{lem:increasing}
			Let $a_{n,m}:\N\times \N\to (-\infty,0)$ be non-decreasing in $m$ and such that \[\limsup_{n\to \infty}a_{n,m}<0\]
			for each $m\in \N$. Then for any increasing sequence $b_n\to \infty$, there is some sequence $m_n\to \infty$ such that
			\[\limsup_{n\to \infty}b_n a_{n,m_n}=-\infty.\]
		\end{lemma}
		
		Thus as left hand side of (\ref{eqn:ignore-c-2}) is non-increasing in $\epsilon>0$, and the right hand side non-decreasing, we may use Lemma \ref{lem:increasing} to find some sequence $\epsilon_N>0$ such that $\epsilon_N\to 0$ sufficiently slowly and such that
		\[\limsup_{N\to \infty}N^{-1/2}\log \frac{\E[\cal{N}_{N}((u+\epsilon_N,\infty))^2]}{\E [\cal{N}_{N}((u,u+\epsilon_N))^2]}=-\infty,\]
		which gives in particular that
		\[\lim_{N\to \infty}\frac{\E[\cal{N}_{N}((u+\epsilon_N,\infty))^2]}{\E [\cal{N}_{N}((u,u+\epsilon_N))^2]}=0.\label{eqn:ignore-3}\]
		As we a.s. have that $\cal{N}_{N}((u,\infty))=\cal{N}_{N}((u,u+\epsilon_N))+\cal{N}_{N}((u+\epsilon_N,\infty))$, we see by using the inequality $\E (X+Y)^2\le \E X^2+\E Y^2 +2 (\E X^2 )^{1/2}(\E Y^2)^{1/2}$ and (\ref{eqn:ignore-3})
		\[1\le \limsup_{N\to \infty}\frac{\E[\cal{N}_{N}((u,\infty))^2]}{\E [\cal{N}_{N}((u,u+\epsilon_N))^2]}\le 1+\limsup_{N\to \infty}\frac{\E[\cal{N}_{N}((u+\epsilon_N,\infty))^2]}{\E [\cal{N}_{N}((u,u+\epsilon_N))^2]}\]\[
		+2\limsup_{N\to \infty}\frac{\E[\cal{N}_{N}((u+\epsilon_N,\infty))^2]^{1/2}}{\E [\cal{N}_{N}((u,u+\epsilon_N))^2]^{1/2}}=1.\]
		In particular, we see that
		\[\limsup_{N\to \infty}\frac{\E[\cal{N}_N(u,\infty)^2]}{\E[\cal{N}_N(u,\infty)]^2}=\limsup_{N\to \infty}\frac{\E[\cal{N}_N(u,u+\epsilon_N)^2]}{\E[\cal{N}_N(u,\infty)]^2}.\label{eqn:ignore-4}\]
		
		Similarly, by (\ref{eqn:ignore-c-1}) we may choose, for small fixed $\delta>0$ a sequence $\rho_N\to 0$ such that
		\[\lim_{N\to \infty}\frac{\E[\Crit_{N,2}((u,u+\delta),(-\rho_N,\rho_N)^c)]}{\E[\Crit_N((u,u+\delta))^2]}=0.\]
		In particular, we see that
		\[\lim_{N\to \infty}\frac{\E[\Crit_{N,2}((u,u+\epsilon_N),(-\rho_N,\rho_N)^c)]}{\E[\Crit_{N}((u,\infty))^2]}=0.\]
		Furthermore, note that
		\[\limsup_{N\to \infty}\frac{\E[\Crit_{N,2}((u,u+\epsilon_N),(-\rho_N,\rho_N))]}{\E[\Crit_N((u,\infty))]^2}\le \]
		\[\limsup_{N\to \infty}\frac{\E[\Crit_{N,2}((u,u+\epsilon_N),(-\rho_N,\rho_N))]}{\E[\Crit_N((u,u+\epsilon_N))]^2}.\]
		In particular,
		\[\limsup_{N\to \infty}\frac{\E[\Crit_N((u,u+\epsilon_N))^2]}{\E[\Crit_N((u,\infty))]^2}\le \limsup_{N\to \infty}\frac{\E[\Crit_{N,2}((u,u+\epsilon_N),(-\rho_N,\rho_N))]}{\E[\Crit_N((u,u+\epsilon_N))]^2},\]
		and so by (\ref{eqn:ignore-4}) we finally obtain that
		\[\limsup_{N\to \infty}\frac{\E[\Crit_N((u,\infty))^2]}{\E[\Crit_N((u,\infty))]^2}\le \limsup_{N\to \infty}\frac{\E[\Crit_{N,2}((u,u+\epsilon_N),(-\rho_N,\rho_N))]}{\E[\Crit_N((u,u+\epsilon_N))]^2}.\label{eqn:ignore-pen-k}\]
		Applying Proposition \ref{prop:constant order concentration bulk} to the right hand side completes the proof of (\ref{eqn:ignore-pen}) and thus of (\ref{eqn:theorem:main1}) as well.
		
		To prove (\ref{eqn:theorem:main2}) we note it again suffices to show
		\[\limsup_{N\to \infty}\frac{\E[\Crit_{N,st}((u,\infty))^2]}{\E[\Crit_{N,st}((u,\infty))]^2}\le 1.\label{eqn:ignore-pen-2}\]
		By Lemma \ref{lem:stable points} when $u>\Einf$ we have that
		\[\lim_{N\to \infty} \frac{\E[\Crit_N((u,\infty))]}{\E[\Crit_{N,st}((u,\infty))]}=1.\]
		Thus we have 
		\[\limsup_{N\to \infty}\frac{\E[\Crit_{N,st}((u,\infty))^2]}{\E[\Crit_{N,st}((u,\infty))]^2}\le \limsup_{N\to \infty}\frac{\E[\Crit_{N}((u,\infty))^2]}{\E[\Crit_{N,st}((u,\infty))]^2}=\]
		\[\limsup_{N\to \infty}\frac{\E[\Crit_{N}((u,\infty))^2]}{\E[\Crit_{N}((u,\infty))]^2},\]
		and so (\ref{eqn:ignore-pen-2}) follows from (\ref{eqn:ignore-pen}).
	\end{proof}
	
	The focus of the remainder of the section will be the proof of Proposition \ref{prop:constant order concentration bulk}. We denote the ratio of the absolute determinants as
	\[\Delta_N(r,u_1,u_2):=\frac{\E[\prod_{k=1,2}|\det(\bar{M}^k_N(r,u_1,u_2))|]}{\prod_{k=1,2}\E\left[|\det(A_{N-1}-\hat{u}_{N,k} I)|\right]},\]
	where here $A_{N-1}$ is sampled from $GEE(\tau,N-1)$. By expanding the expressions in Lemmas \ref{lem:second-moment-exact} and \ref{lem:first-moment-exact} around $r=0$, we obtain the following result, proven in Section \ref{section:theorem-O1-section}.
	
	\begin{lemma}
		\label{lem:ratio of second and first moment}
		For any choice of $u\in \R$, $\rho_N>0$ and $a_N<b_N$ such that $\rho_N\to 0$ and $a_N,b_N\to u$, we have that
		\[\frac{\E[\Crit_{N,2}((a_N,b_N),(-\rho_N,\rho_N))]}{\E[\Crit_N((a_N,b_N))]^2}\le\]\[ (1+o(1))\sup_{a_N<u_1,u_2<b_N}\left(\sq{\frac{N}{2\pi}}\int_{(-\rho_N,\rho_N)}e^{-N(\frac{r^2}{2}+o(r^3))}\Delta_N(r,u_1,u_2)dr\right).\]
	\end{lemma}
	
	This reduces us to producing estimates for the size of $\Delta_N(r,u_1,u_2)$. The first of these will give us a bound on (\ref{eqn:ignore-75}) up to a polynomial order in $N$.
	
	\begin{lemma}
		\label{lem:moments of M}
		Fix $\tau\in(0,1)$, even $\ell$, $u_1,u_2\in \R$, $r\in (-1,1)$ and $k=1,2$. Then there is $C$ such that if $|u_1|,|u_2|<\Einf$
		\[\frac{\E[|\det(\bar{M}_N^k(r,u_1,u_2))|^\ell]}{\E[|\det(A_{N-1}-\hat{u}_{k,N}I)|]^\ell}\le C N^{\ell(\ell-1)/4},\]
		and if $|u_1|,|u_2|>\Einf$
		\[\frac{\E[|\det(\bar{M}_N^k(r,u_1,u_2))|^\ell]}{\E[|\det(A_{N-1}-\hat{u}_{k,N}I)|]^\ell}\le C.\]
		Moreover, these bounds are uniform over compact subsets of $(r,u_1,u_2)\in (-1,1)\times (\R\setminus\{\pm \Einf\})^2$.
	\end{lemma}
	
	The proof of Lemma \ref{lem:moments of M}, given in Section \ref{section:theorem-O1-section}, is primarily reliant on the following bounds on moments of the absolute characteristic polynomial for the Gaussian elliptic ensemble.
	
	\begin{theorem}
		\label{theorem:moments of character main}
		Fix $\tau\in(0,1)$, $\ell\in \N$, and $x\in \R$. Then there is $C$ such that if $|x|<1+\tau$
		\[\frac{\E[|\det(A_N-xI)|^{\ell}]}{\E[|\det(A_N-xI)|]^{\ell}}\le CN^{\ell(\ell-1)/4},\]
		and such that if $|x|>1+\tau$
		\[\frac{\E[|\det(A_N-xI)|^{\ell}]}{\E[|\det(A_N-xI)|]^{\ell}}\le C.\]
		Moreover, these bounds are uniform in compact subsets of $\R\setminus\{\pm (1+\tau)\}$.
	\end{theorem}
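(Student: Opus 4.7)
The strategy splits according to whether $x$ lies in the bulk interval $(-(1+\tau),1+\tau)$ --- the intersection of the limiting spectral ellipse $\cal{E}_\tau$ with $\R$ --- or in its exterior; these two regimes are handled by rather different means.

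\medskip

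\noindent\textbf{Exterior regime $|x|>1+\tau$.} Here $x$ is at macroscopic distance from the support of $\mu_{A_N}$, so $z\mapsto\log|z-x|$ is smooth and bounded on a neighborhood of $\cal{E}_\tau$. Thus $\log|\det(A_N-xI)|=N\int\log|z-x|\,d\mu_{A_N}(z)$ is a smooth linear statistic of $\mu_{A_N}$, and I would use the concentration and large-deviation estimates for such linear statistics assembled in Section \ref{section:elliptic-ensemble-background} to obtain uniform exponential-moment control on the centered quantity $\log|\det(A_N-xI)|-\E[\log|\det(A_N-xI)|]$. A short Cauchy--Schwarz and exponentiation argument then converts this into $\E[|\det(A_N-xI)|^\ell]\le C\,\E[|\det(A_N-xI)|]^\ell$ uniformly in $x$ on compact subsets of $\R\setminus[-(1+\tau),1+\tau]$.

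\medskip

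\noindent\textbf{Bulk regime $|x|<1+\tau$.} The orthogonal invariance of $GEE(N,\tau)$ places its joint eigenvalue density in Pfaffian form, with explicit skew-orthogonal polynomial kernel given by Forrester \cite{forrester-elliptic} (this is what forces the restriction $\tau\ge 0$). The plan is to expand both $\E[|\det(A_N-xI)|^\ell]$ and $\E[|\det(A_N-xI)|]^\ell$ via an Andr\'eief/de Bruijn-type identity, so that
\[\frac{\E[|\det(A_N-xI)|^\ell]}{\E[|\det(A_N-xI)|]^\ell}\]
becomes a ratio of finite-size Pfaffian integrals against the elliptic kernel, with external weights $\prod_j|\lambda_j-x|^\ell$ in the numerator and $\prod_j|\lambda_j-x|$ in each of the $\ell$ first-moment factors. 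Gaussian tail bounds on the skew-orthogonal kernel truncate the analysis to an $O(N^{-1/2})$ neighborhood of $x$ on the real axis, and the macroscopic tail is absorbed by the exterior-regime argument, reducing matters to a local estimate on at most $\ell$ eigenvalues clustered near $x$.

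\medskip

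\noindent\textbf{Main obstacle.} The polynomial growth $N^{\ell(\ell-1)/4}$ originates from this local cluster. Near a real bulk point of $\cal{E}_\tau$ the typical spacing of real eigenvalues is $N^{-1/2}$ (one order coarser than in the GOE, reflecting the two-dimensional spectral support), so a cluster of $\ell$ real eigenvalues at that scale carries a Vandermonde-type factor of order $N^{-\ell(\ell-1)/4}$ relative to the independent-eigenvalue prediction; this is precisely the factor by which the $\ell$-th moment is inflated compared with the $\ell$-th power of the first moment. Making this heuristic into a rigorous uniform upper bound is the principal technical step: it demands sharp pointwise bounds on the skew-orthogonal kernel of the elliptic ensemble at bulk points on the real axis, uniform in the base point $x$, which I would derive from the integral representations in \cite{forrester-elliptic} together with Gaussian control of the nearby complex eigenvalue pairs. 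The uniformity of these kernel estimates degrades as $|x|\to 1+\tau$, which is consistent with the exclusion of the spectral edges $\pm(1+\tau)$ in the statement.
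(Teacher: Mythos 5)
Your overall skeleton---splitting at $|x|=1+\tau$, using concentration of linear statistics outside the bulk and the Pfaffian/skew-orthogonal structure of \cite{forrester-elliptic} inside it---is the same as the paper's, which derives the theorem from Proposition \ref{prop:moments of character outside bulk} and Proposition \ref{prop:moments of character bulk classical} respectively. However, there are two genuine gaps in the argument as written. First, in the exterior regime you treat $\log|\det(A_N-xI)|=N\int\log|z-x|\,d\mu_{A_N}(z)$ as a smooth linear statistic of the eigenvalue empirical measure and invoke concentration for it. For non-normal matrices this step fails: eigenvalues are not stable under perturbation of the entries (there is no Hoffman--Wielandt inequality for them), so linear eigenvalue statistics are not Lipschitz in the Frobenius norm and the log-Sobolev/Herbst machinery does not apply. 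This is precisely the obstruction the paper flags at the start of Section \ref{section:KR-2 point-old}; the fix is Girko symmetrization, $|\det(A_N-xI)|=\prod_i s_i(A_N-xI)$, working with the singular-value measures $\nu_{A_N,x}$, for which Lemma \ref{lem:lipschitz-norm} gives the Lipschitz bound and Lemma \ref{lem:log-sob bound} the concentration. One must also truncate the logarithm below (the $\log_\epsilon$ device) and control the discrepancy through a Wegner-type bound on the smallest singular value (Lemma \ref{lem:wegner bound}); your sketch ignores the event of an atypically small $s_N(A_N-xI)$, on which $\log|\det(A_N-xI)|$ is unbounded below even for $|x|>1+\tau$.

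Second, in the bulk the proposed reduction---truncating to an $O(N^{-1/2})$ window around $x$ and ``absorbing the macroscopic tail by the exterior argument''---is not a step that can be carried out: the moments are dominated by the global factor $e^{\ell N\phi_\tau(x)}$ contributed by all eigenvalues, and there is no clean way to decouple the local cluster from the rest inside the Pfaffian integral. The paper's route is an exact confluent identity (Lemma \ref{lem:tuca-relation}): $\E[\prod_{i=1}^{\ell}|\det(A_N-\mu_iI)|]$ equals an explicit constant times $e^{\frac{N}{2(1+\tau)}\sum_i\mu_i^2}\,\rho^{\ell}_{N+\ell}(\tilde{\mu}_1,\dots,\tilde{\mu}_\ell)/|\Delta(\mu_1,\dots,\mu_\ell)|$, i.e.\ the $\ell$-th moment is the $\mu_i\to x$ limit of the $\ell$-point \emph{real} correlation function of the $(N+\ell)$-dimensional ensemble divided by the Vandermonde. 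The factor $N^{\ell(\ell-1)/4}$ then comes from Taylor-expanding the Pfaffian of the uniformly approximated kernel at the diagonal (each of the $\ell(\ell-1)/2$ derivatives contributing $\sqrt{N}$, offset by the constant $D_{N,\ell}$). Your spacing heuristic correctly identifies the exponent and the role of the $N^{-1/2}$ real-eigenvalue spacing, but without the confluent identity the heuristic does not become a proof. You also leave implicit the matching lower bound $\E[|\det(A_N-xI)|]\ge c\,e^{N\phi_\tau(x)}$ needed to control the denominator of the ratio in the bulk.
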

	
	In the bulk, this follows directly from Theorem 1 of \cite{PaxComp}. Outside of the bulk, this follows from Proposition \ref{prop:moments of character outside bulk}, which is a main result of Section \ref{section:elliptic-ensemble-background}.
	
	Finally we will need a result which controls the correlations on the $O(1)$-scale when the matrices are almost independent.
	
	\begin{lemma}
		\label{lem:delta-estimates-near-zero}
		Assume $p>9$ and $\tau\in(0,1)$, and fix choices of $C>0$ and $u\in \R\setminus\{\pm \Einf\}$. Then for any $a_N<b_N$ with $a_N,b_N\to u$ we have that
		\[\limsup_{N\to \infty}\sup_{a_N<u_1,u_2<b_N}\sup_{|r|<C\sq{\log(N)/N}}\Delta_N(r,u_1,u_2)\le 1. \]
	\end{lemma}
	
	This lemma will also be proven in Section \ref{section:theorem-O1-section}, and relies on treating the correlation of the two matrices as a perturbation term and then applying coarse perturbative methods.
	
	Equipped with these preliminaries, we may give the proof of Proposition \ref{prop:constant order concentration bulk}.
	
	\begin{proof}[Proof of Proposition \ref{prop:constant order concentration bulk}]
		By Lemma \ref{lem:ratio of second and first moment} we are reduced to showing that
		\[\limsup_{N\to \infty}\sup_{a_N<u_1,u_2<b_N}\left(\sq{\frac{N}{2\pi}}\int_{(-\rho_N,\rho_N)}e^{-N(\frac{r^2}{2}+o(r^3))}\Delta_N(r,u_1,u_2)dr\right)\le 1.\]
		For sufficiently large $N$, we have that $(a_N,b_N)\cap \{\pm E_{\infty;p,\tau}\}=\varnothing$ and $\rho_N\le 1/2$. Thus using Lemma \ref{lem:moments of M} and the Cauchy-Schwarz inequality, we see there is large $C>0$ such that for $a_N\le u_1,u_2\le b_N$ and $|r|\le \rho_N$ we have that
		\[\Delta_N(r,u_1,u_2)\le \frac{\left(\E[|\det(\bar{M}_N^1(r,u_1,u_2))|^2]\E[|\det(\bar{M}_N^2(r,u_1,u_2))|^2]\right)^{1/2}}{\E[|\det(A_{N-1}-\hat{u}_{1,N}I)|]\E[|\det(A_{N-1}-\hat{u}_{2,N}I)|]}\le C\sq{N}.\]
		To use this, let us denote $J_N:=(-\rho_N,\rho_N)\setminus(-10\sq{\log(N)/N},10\sq{\log(N)/N})$. In particular, for $\epsilon\in (0,1/8)$ and $N$ large enough that $|\rho_N|\le 1/2$, we have that
		\[\int_{J_N}e^{-N(\frac{r^2}{2}-\epsilon|r|^3)}\Delta_N(r,u_1,u_2)dr\le C\sq{N}\int_{J_N}e^{-N(\frac{r^2}{2}-\epsilon|r|^3)}dr\]
		\[\le C\sq{N}\int_{J_N}e^{-\frac{Nr^2}{4}}dr\le  C2|\rho_N|N^{1/2-10^2/4},\label{eqn:ignore-141}\]
		where in the last step we have just bounded the integral by its maximal value. This final expression goes to zero uniformly in the choice of $u_1,u_2$. In particular, we are reduced to showing that
		\[\limsup_{N\to \infty}\sup_{a_N<u_1,u_2<b_N}\left(\sq{\frac{N}{2\pi}}\int_{-10\sq{\log(N)/N}}^{10\sq{\log(N)/N}}e^{-N(\frac{r^2}{2}+o(r^3))}\Delta_N(r,u_1,u_2)dr\right)\le 1.\label{eqn:ignore1641}\]
		Bounding this integral by taking the maximal value of $\Delta_N(r,u_1,u_2)$ over the domain of $r$, and then applying Lemma \ref{lem:delta-estimates-near-zero}, we see (\ref{eqn:ignore1641}) is bounded by
		\[\limsup_{N\to \infty}\left(\sq{\frac{N}{2\pi}}\int_{-10\sq{\log(N)/N}}^{10\sq{\log(N)/N}}e^{-N(\frac{r^2}{2}+o(r^3))}dr\right).\]
		
		To contend with this note that for $\epsilon>0$ we have that
		\[\sq{\frac{N}{2\pi}}\int_{-10\sq{\log(N)/N}}^{10\sq{\log(N)/N}}e^{-N(\frac{r^2}{2}-\epsilon|r^3|)}dr\le \]
		\[e^{N\epsilon 10^3\left(\frac{\log(N)}{N}\right)^{3/2}}\sq{\frac{N}{2\pi}}\int_{-\infty}^{\infty}e^{-\frac{Nr^2}{2}}dr=e^{N\epsilon 10^3\left(\frac{\log(N)}{N}\right)^{3/2}}=1+o(1).\]
		
		Altogether this completes the proof.
	\end{proof}
	
	\section{Proof of Lemma \ref{lem:second-moment-exact} \label{section:KR-2 point-intro}}
	
	In this section, we will give a proof of Lemma \ref{lem:second-moment-exact}. This will follow readily from a direct application of the Kac-Rice formula, as well as a variety of computations about the covariance structure of $\b{F}$ and its derivatives, which will be given in Appendix \ref{appendix:covariance-computation}. The exact form of the Kac-Rice formula we will apply is given by Lemma \ref{lem:formal-KR} below, which is proven in Appendix \ref{appendix:Kac-Rice-Proof}. This is a modification of the Kac-Rice formula for vector fields given in Chapter 6 of \cite{azais} to the case of a general Riemannian manifold. It is important to note that, even restricted to the relaxational case, this requires fewer non-degeneracy conditions than the corresponding formula of \cite{AT}, which simplifies the number of conditions we need to verify.
	
	To begin, we perform a normalization on $\b{F}$ to simplify the factors which appear in our analysis. For each $\ell$, let $S^{\ell}:=S^{\ell}(1).$ We define for $x\in S^{N-1}$ \[\b{g}(x):=\b{f}(\sq{N}x),\;\; \b{G}(x):=\b{F}(\sq{N}x),\;\; \zeta(x):=\lam(\sq{N}x).\]
	We observe that $\b{g}$, $\b{G}$ and $\zeta$ are, respectively, a vector-valued function, vector field, and function on $S^{N-1}$. The zero sets of $\b{G}$ and $\b{F}$ are related up to a rescaling, so it will suffice to prove a similar result for $\b{G}$. Indeed we have that
	\[\Crit_N(B)=\#\{\sigma \in S^{N-1}:\b{G}(x)=0,\zeta(x)\in B\},\]
	and similarly for $\Crit_{N,2}(B,I)$, so it will suffice to work with $(\zeta,\b{g},\b{G})$ instead of $(\lam,\b{f},\b{F})$.
	
	Given a (piecewise) smooth orthogonal frame field $E$ on $S^{N-1}$, we define \[\b{G}_{E}(x):=((\b{G}(x),E_i(x)))_{i=1}^{N-1},\;\;\;\D_{E} \b{G}(x):=(E_i(\b{G}(x),E_j(x)))_{i,j=1}^{N-1},\]
	where the inner-product here is the standard Riemannian metric on $S^{N-1}$ inherited from the standard embedding into $\R^N$. Lastly, we denote for $r\in [-1,1]$
	\[\n(r)=(0,\dots 0,\sq{1-r^2},r)\in S^{N-1},\]
	and the north pole as $\n=\n(1)$. 
	
	We now give the result of a direct application of the Kac-Rice formula, after which we will simplify the remaining terms to obtain Lemma \ref{lem:second-moment-exact}.
	\begin{lemma}
		\label{lem:Kac-Rice}
		Let $E$ be an arbitrary (piecewise) smooth orthonormal frame field on $S^{N-1}$. For any nice $B\subseteq \R$ and nice $I\subseteq (-1,1)$,
		\[\E[\Crit_{N,2}(B,I)]=\tilde{C}_N\int_I(1-r^2)^{\frac{N-3}{2}}\varphi_{\b{G}_E(\n),\b{G}_E(\n(r))}(0,0)\]
		\[\times \E\big[|\det \left(\frac{\D_E \b{G}(\n)}{\sq{(N-1)p(p-1)}}\right)||\det \left(\frac{\D_E \b{G}(\n(r))}{\sq{(N-1)p(p-1)}}\right)|\times\]
		\[ I(\zeta(\n),\zeta(\n(r))\in B)\big| \b{G}_E(\n)=\b{G}_E(\n(r))=0\big]dr,\label{eqn:kac-rice-usage-lem}\]
		where here $\varphi_{\b{G}_E(\sigma),\b{G}_E(\sigma')}$ is the density of the random vector $(\b{G}_E(\sigma),\b{G}_E(\sigma'))$ at zero and 
		\[\tilde{C}_N=\vol(S^{N-1})\vol(S^{N-2})((N-1)p(p-1))^{N-1}.\]
	\end{lemma}
	
	Before proving this, we note that the law of the function $(\zeta,\b{g}):S^{N-1}\to \R^{N+1}$ in isotropic in the following sense: for $R\in O(N)$ we have the distributional equality
	\[(\zeta,\b{g})\disteq(\zeta\circ R,R^{-1}\b{g}\circ R)\]
	where $\b{g}\circ R(x)=\b{g}(R(x))$ denotes precomposition by the rotation $R$. In particular, the same holds for $\b{G}$.
	
	\begin{proof}[Proof of Lemma \ref{lem:Kac-Rice}]
		We define $S^2_N(I)=\{(x,y)\in S^{N-1}\times S^{N-1}:(x,y)\in I\}$, considered as an open Riemannian submanifold of $S^{N-1}\times S^{N-1}$. In the notation of Lemma \ref{lem:formal-KR}, $\Crit_{N,2}(B,I)=N_{h,k}(B)$ with vector field on $S^2_N(I)$ given by $h(x,y)=(\b{G}(x),\b{G}(y))$, where the decomposition is with respect to the identification $T_{(x,y)}S^2_N(I)=T_xS^{N-1}\oplus T_yS^{N-1}$, and $k(x,y)=(\zeta(x),\zeta(y))$. 
		
		One may produce an orthonormal frame field on $\bar{E}$ on $S^2_N(I)$, by acting on each coordinate by $E$, which satisfies
		\[\D_{\bar{E}}h(x,y)=\begin{bmatrix}\D_E \b{G}(x)&0\\ 0&\D_E \b{G}(y)\end{bmatrix}\]
		with block structure with respect to the above decomposition of $T_{(x,y)}S^2_N(I)$. 
		
		We note that if $r\neq \pm 1$, then by Lemma \ref{lem:conditioning-density-statement} below $(\b{G}(\n),\b{G}(\n(r)))$ is non-degenerate. Employing the above invariance, we see that the vector $(\b{G}(x),\b{G}(y))$ is non-degenerate as long as $x\neq \pm y$. In particular, we may apply Lemma \ref{lem:formal-KR} to obtain that for any orthonormal frame field on $S^{N-1}$
		\begin{gather}
			\E[\Crit_{N,2}(B,I)]=\int_{S^2_N(I)}\varphi_{\b{G}_E(x),\b{G}_E(y)}(0,0) \E\big[|\det(\D_E \b{G}(x))||\det(\D_E \b{G}(y))|\times\\
			I(\zeta(x),\zeta(y)\in B)\big| \b{G}_E(x)=\b{G}_E(y)=0\big]\omega_{S^{N-1}}(dx)\omega_{S^{N-1}}(dy),
			\label{eqn:ignore-KR-temp}
		\end{gather}
		where $\omega_{S^{N-1}}$ is the standard surface measure on $S^{N-1}$. By the above invariance, the integrand of (\ref{eqn:ignore-KR-temp}) is invariant under applying a rotation to $S^{N-1}\times S^{N-1}$. By the co-area formulae (see as well pg. 10 of \cite{pspin-second}) for any isotropic function on $S^{N-1}\times S^{N-1}$
		\[\int_{S^2_N(I)} f(x,y)\omega_{S^{N-1}}(dx)\omega_{S^{N-1}}(dy)=\]
		\[\vol(S^{N-1})\vol(S^{N-2})\int_I (1-r^2)^{\frac{N-3}{2}}f(\n,\n(r))dr.\]
		Applying this identity to (\ref{eqn:ignore-KR-temp}) completes the proof.
	\end{proof}
	
	We now need the following results which describe the various terms appearing in (\ref{eqn:kac-rice-usage-lem}), beginning with the following expression for the conditional density.
	
	\begin{lemma}
		\label{lem:conditioning-density-statement}
		For any $r\in (-1,1)$, there is an orthonormal frame field $E=(E_i)_{i=1}^{N-1}$ such that the following holds: The Gaussian vector $(\b{G}_E(\n),\b{G}_E(\n(r)))$ is non-degenerate, with density at the origin given by
		\[\varphi_{\b{G}_E(\n),\b{G}_E(\n(r))}(0,0)=(2\pi p)^{-(N-1)}\left(1-r^{2p-2}\right)^{-\frac{N-2}{2}}\times\]\[(1-(r^p-\tau (p-1)r^{p-2}(1-r^2))^2)^{-1/2}=(2\pi p)^{-(N-1)}\left(1-r^{2p-2}\right)^{-\frac{N-1}{2}}\cal{G}(r),\]
		where $\cal{G}(r)$ is the function introduced in Lemma \ref{lem:Kac-Rice}.
		\\
		Moreover, conditional on the event $(\b{G}_E(\n)=\b{G}_E(\n(r))=0)$, $\sq{N}(\zeta(\n),\zeta(\n(r)))$ is a non-degenerate centered Gaussian vector, with $N$-independent covariance matrix $\Sigma_U(r)$, whose inverse is given by (\ref{eqn:def:U-external field form}) below.
	\end{lemma}
	
	The proof of this result is given in Appendix \ref{appendix:covariance-computation}. We will now describe the law of the conditional Jacobian, which we will break into two statements, the first giving the result after one conditions on the event that the values of $\b{G}$ vanish, and the second after conditioning additionally on the values of $\zeta$. This is done as the first is more convenient for the proof of Theorem \ref{theorem:KR-2point}, while in the proof of Proposition \ref{prop:constant order concentration bulk} we will find it convenient to employ the latter.
	
	\begin{lemma}
		\label{lem:jacobian-covariance-statement-no-energy}
		For $r\in (-1,1),$ and with the same choice of $(E_i)_{i=1}^{N-1}$ as in Lemma \ref{lem:conditioning-density-statement}, the following holds: Conditional on the event \[(\b{G}_E(\n)=0,\b{G}_E(\n(r))=0)\] let us denote the law of \[\left(\frac{\D_E \b{G}(\n)}{\sq{(N-1)p(p-1)}},\frac{\D_E \b{G}(\n(r))}{\sq{(N-1)p(p-1)}},\zeta(\n),\zeta(\n(r))\right)\]
		as \[(M^1_N(r),M^2_N(r),U^1_N(r),U^2_N(r)).\]
		For $k=1,2$, let us define 
		\[\hat{U}^k_N(r):=\left(\sq{\frac{N}{p(p-1)(N-1)}}\right)U^k_N(r),\]
		and write $M^k_N(r)$ in terms of square $(N-1)$-by-$1$ blocks as 
		\[M^k_N(r)=\begin{bmatrix}
			H^k_N(r)&  W^k_N(r)\\
			V^k_N(r)^t&S^k_N(r)\\
		\end{bmatrix}-\hat{U}^k_N(r) I.\]
		Then $(H^k_N(r),W^k_N(r),V^k_N(r),S^k_N(r),U^k_N(r))_{k=1,2}$ is a Gaussian random vector that has a covariance structure which satisfies the following properties:
		\begin{itemize}
			\item The variables $(S^k_N(r),U^k_N(r))_{k=1,2}$, $(V^k_N(r),W^k_N(r))_{k=1,2}$, and $(H^k_N(r))_{k=1,2}$ are independent and centered.
			\item We have the distributional equality
			\[\sqrt{\frac{N-1}{N-2}}\begin{bmatrix} H^1_N(r)\\ H^2_N(r)\end{bmatrix}\disteq\begin{bmatrix}\sq{1-|r|^{p-2}}\hat{G}^1_N+\sign(r)^p\sq{|r|^{p-2}}G_N\\
				\sq{1-|r|^{p-2}}\hat{G}^2_N+\sq{|r|^{p-2}}G_N\end{bmatrix},\]
			where $\hat{G}^1_N,\hat{G}^2_N,G_N$ are i.i.d matrices sampled from $GEE(\tau,N-2)$.
			\item The vectors $(V^k_N(r)_i,W^k_N(r)_i)_{k=1,2}$ for $1\le i<N-1$ are i.i.d and for fixed $i$ \[\sq{N-1}(V^1_N(r)_i,W^1_N(r)_i,V^2_N(r)_i,W^2_N(r)_i)\]
			has an $N$-independent covariance function given by the matrix $\Sigma_Z(r)$ defined in (\ref{eqn:def:SigmaZ}).
			\item $(\sq{N-1}S^k_N(r),\sq{N}U^k_N(r))_{k=1,2}$ has an $N$-independent covariance function.
		\end{itemize}
	\end{lemma}
	We note that as we will at no point need any properties of the covariance function of $(\sq{N-1}S^k_N(r),\sq{N}U^k_N(r))_{k=1,2}$ we do not derive a explicit formula for them.
	\begin{lemma}
		\label{lem:jacobian-covariance-statement-with-energy}
		Fix $r\in (-1,1)$, and $u_1,u_2\in \R$. With notation as in Lemma \ref{lem:jacobian-covariance-statement-no-energy}, let us denote \[(\Mb^1_N(r,u_1,u_2),\Mb^2_N(r,u_1,u_2))=\left(M^1_N(r),M^2_N(r)\right)\big|(U^1_N(r)=u_1,U^2_N(r)=u_2).\]
		For $k=1,2$, let us write $\Mb^k_N(r,u_1,u_2)$ in terms of square $(N-1)$-by-$1$ blocks as 
		\[\Mb^k_N(r,u_1,u_2)=\begin{bmatrix}
			\Gb^k_N(r)&  \Wb^k_N(r)\\
			\Vb^k_N(r)^t&\Sb^k_N(r,u_1,u_2)\\
		\end{bmatrix}-\hat{u}_{N,k}I.\]
		Then $(\Gb^k_N(r),\Wb^k_N(r),\Vb^k_N(r),\Sb^k_N(r,u_1,u_2))_{k=1,2}$ is the Gaussian vector determined by the following properties:
		\begin{itemize}
			\item$(\Gb^k_N(r), \Wb^k_N(r),\Vb^k_N(r))_{k=1,2}$ is independent of $(\Sb^k_N(r,u_1,u_2))_{k=1,2}$ and coincides in law with the random variable $(H^k_N(r),W^k_N(r),V^k_N(r))_{k=1,2}$ given in Lemma \ref{lem:jacobian-covariance-statement-no-energy}.
			\item  For $1\le k,l\le 2$
			\[(N-1)\E[\Sb^k_N(r,u_1,u_2)]=m^k_{1}(r)\hat{u}_{N,1}+m^k_{2}(r)\hat{u}_{N,2},\]
			\[(N-1)\E[\Sb^k_N(r,u_1,u_2)\Sb^l_N(r,u_1,u_2)]=[\Sigma_{S}(r)]_{k,l},\]
			where $m^k_l$ and $\Sigma_{S}(r)$ are given by (\ref{eqn:m-def}) and (\ref{eqn:S-def}).
		\end{itemize}
	\end{lemma}
	
	The proof of these lemmas will also be given in Appendix \ref{appendix:covariance-computation}. The proof of Lemma \ref{lem:second-moment-exact} follows immediately from Lemmas \ref{lem:Kac-Rice}, \ref{lem:conditioning-density-statement}, and \ref{lem:jacobian-covariance-statement-no-energy}.
	
	We remark that while the omission of the notation $(u_1,u_2)$ in various factors in Lemma \ref{lem:jacobian-covariance-statement-with-energy} is an abuse of notation, we have chosen this omission for terms that are unaffected by this conditioning, which will significantly simplify notation in latter sections.
	
	\section{Proof of Theorem \ref{theorem:KR-2point}\label{section:KR-2 point-old}}
	
	The primary purpose of this section is to give the proof of Theorem \ref{theorem:KR-2point}, which is our upper bound on $\E[\Crit_{N,2}(B,I)]$ on the exponential scale. This will follow analysis of the terms occurring in Lemma \ref{lem:second-moment-exact}, with the primary obstacle being the terms involving the absolute determinants.
	
	As outlined above, our method will be to rely on general concentration results, but this introduces an important technical distinction between the relaxational and non-relaxational cases, as we are not able to directly use the empirical spectral measure. The reason for this is that the eigenvalues of a general matrix are significantly less stable under perturbation than in the symmetric case (see, for example, the discussion in \cite{circular-law-exposition}).
	
	Instead we must rely on singular values, and in particular the following relationship which holds for any $n$-by-$n$ matrix $A$
	\[|\det(A-zI)|=\prod_{i=1}^{n}s_i(A-zI)=\exp\left(n\int_0^\infty \log(x)\nu_{A,z}(dx)\right).\label{eqn:girko-symmetrization}\]
	The measures $\nu_{A,z}$ satisfy few relations for distinct $z$, unlike in the case of eigenvalues where the corresponding measures are just translations of the empirical spectral measure. This complicates the analysis of $\nu_{A,z}$, and in particular their uniformity, which is explored further in Section \ref{section:elliptic-ensemble-background}.
	
	After this replacement though, treating the perturbation term $E^k_N(r,u_1,u_2)$ will be possible through the following deterministic matrix inequality. To state it, let $\epsilon>0$ and define the functions
	\[h_{\epsilon}(x)=\max(x,\epsilon),\;\;\;\log_{\epsilon}(x)=\log(h_{\epsilon}(x)).\]
	\begin{lemma}
		\label{lem:upper bound using G and W}
		Let $M$ be an $n$-by-$n$ matrix which may be written in terms of square $(n-1)$-by-$1$ blocks as
		\[M=\begin{bmatrix}
			H & W \\ V^t & S
		\end{bmatrix}.\]
		Then for any $\epsilon>0$
		\[|\det(M)|\le \epsilon^{-1}\sq{\|W\|^2+S^2}\left(\|V\|+\epsilon\right)\prod_{i=1}^{n-1}h_{\epsilon}(s_i(H)),\label{eqn:ignore-1020}\]
	\end{lemma}
	The proof of this result is delayed till the end of this section. We now note that
	\[\prod_{i=1}^{n}h_{\epsilon}(s_i(A-zI))=\exp \left(\int_0^{\infty} \log_{\epsilon}(x)\nu_{A,z}(dx)\right).\]
	We will show in Section \ref{section:elliptic-ensemble-background} that $\int_0^{\infty} \log_{\epsilon}(x)\nu_{A_N,z}(dx)$ satisfies a sub-exponential concentration estimate (see Lemma \ref{lem:log-sob bound}). Applying this, as well as some tightness results for the elliptic law, we will prove in Section \ref{section:elliptic-ensemble-background} the following result.
	
	\begin{lemma}
		\label{lem:main elliptic ensemble bound}
		For $|\tau|<1$, and any choice of $\ell,K>0$ we have that
		\[\lim_{\epsilon\to 0}\limsup_{N\to \infty}\sup_{|z|<K}\left|(N\ell)^{-1}\log(\E[\exp(\ell N \int_0^\infty \log_{\epsilon}(x)\nu_{A_N,z}(dx))])-\phi_\tau(z)\right|=0.\]
		In addition, for $0<\epsilon<1$ we have that
		\[\limsup_{N\to \infty}\sup_{|z|\ge 1}\bigg((N\ell)^{-1}\log(\E[\exp(\ell N \int_0^\infty \log_{\epsilon}(x)\nu_{A_N,z}(dx))])\]\[-(2+\log(|z|))\bigg)\le 0.\]
	\end{lemma}
	
	With these preliminaries stated, we now prepare for the proof of Theorem \ref{theorem:KR-2point}. From Lemma \ref{lem:upper bound using G and W} we see that for any $\epsilon>0$ we have that
	\[|\det(M^k_N(r))|\le \epsilon^{-1}\sq{\|W^k_N(r)\|^2+S^k_N(r)^2}\left(\|V^k_N(r)\|+\epsilon\right)\times\]\[\exp \left((N-2)\int_0^{\infty}\log_{\epsilon}(x)\nu_{H^k_N(r),\hat{U}_{k,N}(r)}(dx)\right).\label{eqn:ignore-440}\]
	
	We recall that $\sq{\frac{N-2}{N-1}}H^k_N(r)$ is distributed as $GEE(\tau,N-2)$. Thus Lemma \ref{lem:main elliptic ensemble bound} implies that for any $1>\delta>0$ and $\ell>0$
	\[\limsup_{\epsilon\to 0}\limsup_{N\to \infty}\sup_{u\in \R}\bigg(\frac{1}{N\ell }\log(\E[\exp(\ell(N-2)\int_0^{\infty}\log_{\epsilon}(x)\nu_{H^k_N(r),u}(dx))])\]
	\[-\phi_\tau^\delta(u)\bigg)\le 0,\label{eqn:main usage of elliptic}\]
	where here
	\[\phi_\tau^{\delta}(u)=\begin{cases}
		\phi_\tau(u);\;\;\;|u|\le \delta^{-1}\\
		2+\log(|u|);\;\;\; |u|>\delta^{-1}
	\end{cases}.\]
	The following result allows us to control the remaining terms of (\ref{eqn:ignore-440}), particularly as $r\to 1$.
	
	\begin{lemma}
		\label{lem:bound of edge elements, chi-stuff}
		For any $m\in \N$ there is $C$ such that for all $r\in (-1,1)$ and $N$
		\[\E\left[\|W^k_N(r)\|^{2m}\right]^{1/2m}\le C\sq{(1-r^2)},\;\;\;\E\left[|S^k_N(r)|^{2m}\right]^{1/2m}\le C\sq{(1-r^2)},\]
		\[\E\left[\|V^k_N(r)\|^{2m}\right]^{1/2m}\le C.\]
	\end{lemma}
	
	\begin{proof}[Proof of Lemma \ref{lem:bound of edge elements, chi-stuff}]
		Let $(X_i)_{i=1}$ denote a family of i.i.d standard Gaussian random variables. Then by Lemma \ref{lem:jacobian-covariance-statement-no-energy} we have that
		\[\E \left[\|W^k_N(r)\|^{2m}\right]^{1/2m}=\sigma_1(r)^{1/2}\E\left[\left(\frac{1}{N-1}\sum_{i=1}^{N-2}X_i^2\right)^{m}\right]^{1/2m},\]
		where $\sigma_1(r)$ is the covariance function of any entry of $\sq{N-1}W^k_N(r)$. An expression for this is derived in Appendix \ref{appendix:covariance-computation}, namely in equation (\ref{eqn:def:sigma1}) (see as well the computation of $\Sigma_Z$ following (\ref{eqn:ignore-plane})). We observe that
		\[\E[(\sum_{i=1}^{N-2}X_i^2)^{m}]^{1/2m}=\E[(\chi_{N-2}^2)^m]^{1/2m},\]
		where $\chi_{N-2}^2$ denotes a $\chi^2$-random variable of parameter $(N-2)$. Recalling that for $k>3$ \[\E[(\chi_{k}^2)^m]=(k-1)(k+1)\cdots (k-3+2m)\le (k+2m)^m\le  (2m)^m k^m,\label{eqn:chi-bound}\]
		we see that for $N>5$
		\[\E[\|W^k_N(r)\|^{2m}]^{1/2m}\le \sigma_1(r)^{1/2} (2m)^{1/2}.\]
		Thus to show the desired claim for $W^k_N(r)$ it suffices to show there is $C$ such that $\sigma_1(r)\le C\sq{1-r^2}$, which is among the statements of Lemma \ref{lem:appendix:covariance around 1}. A similar argument demonstrates the claims for $S^k_N(r)$ and $V^k_N(r)$.
	\end{proof}
	
	With these preliminaries we are ready to give the proof of Theorem \ref{theorem:KR-2point}.
	
	\begin{proof}[Proof of Theorem \ref{theorem:KR-2point}]	
		Let us fix $m\ge 2$ and $\epsilon>0$ and denote $\ell=\ell(m)=m/(m-1)$. By (\ref{eqn:ignore-440}) and H\"{o}lder's inequality we have that
		\[\E\left[\prod_{k=1,2}|\det(M^k_N(r))|I(U^k_N(r)\in B)\right]\le  \cal{F}_{N,\epsilon,m}(r)\cal{E}_{N,\epsilon,m}(r),\]
		where here $\cal{E}_{N,\epsilon,m}(r):=$
		\[\E\left[\prod_{k=1,2}\exp\left(\ell(N-2)\int_0^{\infty}\log_{\epsilon}(x)\nu_{H^k_N(r),\hat{U}_{k,N}(r)}(dx)\right)I(U^k_N(r)\in B)\right]^{1/\ell},\]
		\[\cal{F}_{N,\epsilon,m}(r):=\epsilon^{-1}\prod_{k=1,2}\E[(\|W^k_N(r)\|^2+S^k_N(r)^2)^{m}(\|V^k_N(r)\|+\epsilon)^{2m}]^{1/2m}.\]
		
		We observe as an immediate corollary of Lemma \ref{lem:bound of edge elements, chi-stuff} and the Cauchy-Schwarz inequality  there is $C:=C(\epsilon,m)$ such that for $r\in (-1,1)$
		\[\cal{F}_{N,\epsilon,m}(r)\le C\sq{1-r^2}.\]
		Noting as well that
		\[\lim_{r\to \pm 1}(r^p-\tau (p-1)r^{p-2}(1-r^2))^2=1,\]
		we see that the function $\cal{G}$ specified in Lemma \ref{lem:second-moment-exact} satisfies $\cal{G}(r)\le C(1-r^2)^{-1}$ for some $C$. Combining all of these facts, and possibly increasing $C$, Lemma \ref{lem:second-moment-exact} yields that
		\[\E[\Crit_{N,2}(B,I)]\le C C_N\int_{I}\left(\frac{1-r^2}{1-r^{2p-2}}\right)^{(N-2)/2}\cal{E}_{N,m,\epsilon}(r)\frac{dr}{\sq{1-r^2}}. \label{eqn:ignore-348}\]
		Moreover by H\"{o}lder's inequality (with respect to the finite measure $dr/\sqrt{1-r^2}$), and again possibly increasing $C$, this is less than
		\[CC_N\left(\int_{I}\left(\frac{1-r^2}{1-r^{2p-2}}\right)^{\ell(N-2)/2}\cal{E}_{N,m,\epsilon}(r)^{\ell}\frac{dr}{\sq{1-r^2}}\right)^{1/\ell}\]
		Conditioning on $(U^1_N(r),U^2_N(r))$, we see by (\ref{eqn:main usage of elliptic})  that for any fixed $1>\delta>0$, sufficiently small $\epsilon$ and large $N$, we have that
		\[\cal{E}_{N,m,\epsilon}(r)\le e^{N\delta}\E \left[\prod_{k=1,2}\exp\left(\ell(N-2)\phi_{\tau}^\delta(\hat{U}^k_N(r))\right)I(U^k_N(r)\in B)\right]^{1/\ell}.\label{eqn:ignore-349}\]
		With this we define the integral
		\[I_{N,\ell,\epsilon,\delta}:=\int_I \exp(\ell N h(r))\E[\prod_{k=1,2}\exp\left(\ell N\phi_{\tau}^\delta(\hat{U}^k_{N+2}(r))\right)I(U^k_{N+2}(r)\in B)]\frac{dr}{\sq{1-r^2}},\]
		where $h$ was defined in (\ref{def:h-def}) above. With this we see that
		\[\limsup_{N\to \infty}N^{-1}\log(\E[\Crit_{N,2}(B,I)])\le \]
		\[\delta+\limsup_{N\to \infty}N^{-1}\log(C_N)+\limsup_{N\to \infty}(N\ell)^{-1}\log(I_{N-2,\ell,\epsilon,\delta}).\label{eqn:ignore-upp-1}\]
		We note that by Stirling's approximation
		\[\lim_{N\to \infty}N^{-1}\log(C_N)=\log(p-1)+1.\label{eqn:ignore-stir-weak}\]
		Thus we need only obtain asymptotics for $I_{N,\ell,\epsilon,\delta}$, which will follow by Varadhan's Lemma (see Theorem 4.3.1 and Lemma 4.3.6 of \cite{Varadhan}). Let us denote by $(X_1,X_2)$ a pair of standard Gaussian random variables and $R$ an independent random variable on $[-1,1]$ with density $\pi^{-1}(1-r^2)^{-1/2}$. We define
		\[X_{i,N}=(N+2)^{-1/2}X_i,\;\;\hat{X}_{i,N}=(N+2)^{1/2}((N+1)p(p-1))^{-1/2}X_i.\]
		and observe the distributional equality
		\[(U^1_{N+2}(r),U^2_{N+2}(r))\stackrel{d}{=}\left(X_{1,N},X_{2,N}\right)\Sigma_U(r)^{1/2},\]
		and where $\Sigma_U(r)$ is defined in (\ref{eqn:def:U-external field form}). A similar result holds for $\hat{U}^k_{N+2}$. If we further denote
		\[W:=\{(u_1,u_2,r)\in \R^2\times I:(u_1,u_2)\Sigma_U(r)^{1/2}\in B^2\},\]
		then we see that
		\[I_{N,\ell,\epsilon,\delta}=\E\big[\exp\left(\ell N[h(R)+ \sum_{k=1,2}\phi_{\tau}^\delta\left(\left[(\hat{X}_{1,N},\hat{X}_{2,N})\Sigma_U(R)^{1/2}\right]_k\right)]\right)\times\]
		\[I\left((X_{1,N},X_{2,N},R)\in W\right)\big].\]
		We now observe that the random variable $(\hat{X}_{1,N},\hat{X}_{2,N},X_{1,N},X_{2,N},R)$ satisfies a LDP on $\R^4\times [-1,1]$ with good rate function \[J(v_1,v_2,u_1,u_2,r)=\begin{cases}
			\frac{u_1^2}{2}+\frac{u_2^2}{2};\;\;\;v_1=\sq{p(p-1)}u_1,\; v_2=\sq{p(p-1)}u_2\\
			\infty;\;\;\;\; \text{otherwise}
		\end{cases}.\]
		To apply Varadhan's Lemma, we will need to first verify a moment condition. For this we note that there is $C>0$ such that for $x\in \R$
		\[\phi^{\delta}_\tau(x)\le C(1+\log(1+|x|)).\]
		As the entries of $\Sigma_U(r)$ are uniformly bounded in $r\in [-1,1]$, we see that there is $C$ such that for any $(u_1,u_2,r)\in \R^2\times [-1,1]$
		\[\sum_{k=1,2}\phi_{p,\tau}^\delta([(u_1,u_2)\Sigma_U(r)^{1/2}]_k)\le C(1+\log(1+\|(u_1,u_2)\|)).\]
		Observing as well that $h(r)$ is bounded on $[-1,1]$, we see that there is $C$ such for any $\ell'>0$
		\[\limsup_{N\to \infty}N^{-1}\log(\E[\exp(\ell' N[ h(R)+\sum_{k=1,2}\phi_{\tau}^\delta\left(\left[(\hat{X}_{1,N},\hat{X}_{2,N})\Sigma_U(R)^{1/2}\right]_k\right)]\le\]\[ \limsup_{N\to \infty}N^{-1}\log(\E[\exp(\ell' CN(1+2 \log(1+\|(X_1,X_2)\|/\sq{N})))])<\infty.\]
		With this moment condition verified, we may now apply Varadhan's Lemma to conclude that
		\[\limsup_{N\to \infty}N^{-1}\log(I_{N,\ell,\epsilon,\delta})\le\]\[ \sup_{(u_1,u_2,r)\in W} \ell h(r)-\frac{u_1^2}{2}-\frac{u_2^2}{2}+\ell \sum_{k=1,2}\phi_{p,\tau}^\delta([(u_1,u_2)\Sigma_U(R)^{1/2}]_k)=\]
		\[\sup_{(u_1,u_2)\in B_{\epsilon}(B),r\in I} \ell h(r)-\frac{1}{2}(u_1,u_2)\Sigma_U(r)^{-1}\begin{bmatrix}u_1\\ u_2 \end{bmatrix}+\ell \phi_{p,\tau}^{\delta}(u_1)+\ell \phi_{p,\tau}^{\delta}(u_2).\label{eqn:ignore-upper-2}\]
		Thus taking $m\to \infty$ (so $\ell\to 1$) then $\epsilon\to 0$, and finally $\delta\to 0$ we see that equations (\ref{eqn:ignore-upp-1}), (\ref{eqn:ignore-stir-weak}) and (\ref{eqn:ignore-upper-2}) complete the proof.
	\end{proof}
	
	With the proof of Theorem \ref{theorem:KR-2point} completed, we finish this section with the proof of Lemma \ref{lem:upper bound using G and W}.
	
	\begin{proof}[Proof of Lemma \ref{lem:upper bound using G and W}]
		Let us denote
		\[Y=\begin{bmatrix}
			H & 0\\ V^t & 0
		\end{bmatrix}.\]
		The difference $M-Y$ has a single nonzero singular value given by $\sq{\|W\|^2+S^2}$. We note as well that $s_n(Y)=0$. Thus by applying Corollary \ref{corr:determinant of sum bound} we see that
		\[|\det(M)|\le \sq{\|W\|^2+S^2}\prod_{i=1}^{n-1}s_i(Y).\]
		We observe that
		\[\prod_{i=1}^{n-1}s_i(Y)^2=\lim_{\delta\to 0}\frac{1}{\delta}\det(Y^tY+\delta I)=\lim_{\delta\to 0}\det(H^tH+VV^t+\delta I)=\det(H^tH+VV^t).\]
		By applying Corollary \ref{corr:determinant of sum bound} again we see that 
		\[|\det(H^tH+VV^t)|\le (s_{n-1}(H)+\|V\|)\prod_{i=1}^{n-2}s_i(H)\le \epsilon^{-1}(\|V\|+\epsilon)\prod_{i=1}^{n-1}h_{\epsilon}(s_i(H)).\]
		Where in the last inequality we have used that for $x,y\in (0,\infty)$
		\[x+y\le h_{\epsilon}(x)\epsilon^{-1}(\epsilon+y).\]
		Together these statements establish (\ref{eqn:ignore-1020}).
	\end{proof}
	\section{Proof of Lemmas \ref{lem:stable points}, \ref{lem:ratio of second and first moment}, \ref{lem:moments of M} and \ref{lem:delta-estimates-near-zero}\label{section:theorem-O1-section}}
	
	In this section we will give the results needed to prove Proposition \ref{prop:constant order concentration bulk}, which involve the computation of the second moment at the constant order scale. We will begin by giving the proof of Lemmas \ref{lem:moments of M} and \ref{lem:delta-estimates-near-zero}, and then proceed to the proofs of Lemmas \ref{lem:stable points} and \ref{lem:ratio of second and first moment}.
	
	To begin we need the following basic concentration result, whose proof we defer to the end of the section.
	\begin{lemma}
		\label{lem:concentration lemma XY}
		Fix a Gaussian pair $(X,Y)\disteq \cal{N}(0,\Sigma)$. For each $N$, let $(X_N,Y_N)\in \R^N\times \R^N$ be the random vector whose entries are i.i.d copies of $N^{-1/2}(X,Y)$. Let $f:\R^3\to \R$ be any continuous function of polynomial growth. Then we have that
		\[\lim_{N\to \infty}\E[f(\|X_N\|,\|Y_N\|,(X_N,Y_N)^2)]=f(\Sigma_{11},\Sigma_{22},\Sigma_{12}^2).\]
	\end{lemma}
	
	We now proceed to the proof of Lemma \ref{lem:moments of M}. As in the proof of Lemma 8 in \cite{subag-ofer-new} in the relaxational case, we first separate the perturbation term by taking a cofactor expansion, and then simplify these terms by using rotation invariance. In the symmetric case, rotation invariance is quite strong, and this gives a direct recurrence in terms of the characteristic polynomials of smaller size. In our case however, we instead rely on direct comparison of the terms in this expansion for both $\bar{M}^k_N(r,u_1,u_2)$ and $A_{N-1}-\hat{u}_{N,k}I$.
	
	\begin{proof}[Proof of Lemma \ref{lem:moments of M}]
		We will fix and omit the choice of $(k,r,u_1,u_2)$ from the notation for clarity. Additionally, we denote $v=\hat{u}_{N,k}$. With this notation, the decomposition of $\Mb_N$ in Lemma \ref{lem:jacobian-covariance-statement-with-energy} in terms of square $(N-1)$-by-$1$ blocks is given as
		\[\Mb_N=\begin{bmatrix}
			\Hb_N& \Wb_N\\
			\Vb_N^t & \Sb_N
		\end{bmatrix}-vI.\]
		Let $A_{N-1}$ be sampled from $GEE(\tau,N-1)$. To compare the two, we will also write $A_{N-1}$ in terms of square $(N-1)$-by-$1$ blocks as
		\[A_{N-1}=\begin{bmatrix}
			\hat{H}_N& \hat{W}_N\\
			\hat{V}_N^t & \hat{S}_N
		\end{bmatrix}.\label{eqn:ignore-mad}\]
		By Lemma \ref{lem:jacobian-covariance-statement-with-energy} we see that $\Hb_N\disteq \hat{H}_N$, though the laws for the other elements are in general different. To compare these we will expand both in similar terms starting with $\Mb_N$. We first note that a.s.
		\[\det(\Mb_N-vI)=\det(\Hb_N-vI)\left(\Sb_N-v-\Vb_N^t(\Hb_N-vI)^{-1}\Wb_N\right).\]
		For convenience, let us denote $\Gb_N=(\Hb_N-vI)^{-1}$. Then
		using the inequality $(x+y)^\ell\le 2^\ell (x^\ell+y^\ell)$, and noting that $\Sb_N$ is independent of $(\Hb_N,\Vb_N,\Wb_N)$, we see that
		\[
		2^{-\ell}\E[|\det(\Mb_N-vI)|^{\ell}]\le \E[|\det(\Hb_N-vI)|^{\ell}]\E[|\Sb_N-v|^{\ell}]\]\[+\E[|\det(\Hb_N-vI)|^{\ell}|\Vb_N^t\Gb_N\Wb_N|^{\ell}].\label{eqn:ignore-plane-2}\]
		The first term is bounded by $C\E[|\det(\Hb_N-vI)|^{\ell}]$, so we focus on the second. Let us denote 
		\[\bar{R}_N=(\Vb_N,\Wb_N),\;\;\;\bar{Q}_N=\sq{\|\Vb_N\|^2\|\Wb_N\|^2-(\Vb_N,\Wb_N)^2}.\]
		As the law of $\Hb_N$ is invariant under orthogonal conjugation, and $(\Vb_N,\Wb_N)$ is independent of $\Hb_N$, we see we may choose $U\in O(N)$, independent of $\Hb_n$, such that
		\[U\Vb_N=(\|\Vb_N\|,0,\dots 0),\;\;\;\; U\Wb_N=\|\Vb_N\|^{-1}(\bar{R}_N,\bar{Q}_N,0,\dots 0).\]
		Applying $U$ and conjugation invariance of $\Hb_N$ (and thus $\Gb_N$) we see that
		\[\E[|\det(\Hb_N-vI)|^{\ell}|\Vb_N^t\Gb_N\Wb_N|^{\ell}]=\E[|\det(\Hb_N-vI)|^{\ell}|(U\Vb_N)^t\Gb_NU\Wb_N|^{\ell}]=\]
		\[\E[|\det(\Hb_N-vI)|^{\ell}|[\Gb_N ]_{11}\bar{R}_N+[\Gb_N]_{12}\bar{Q}_N|^\ell|].\]
		We will now use that $\ell$ is even, so that we may expand this using the binomial theorem as
		\[
		\sum_{k=0}^{\ell}\binom{\ell}{k}\E[|\det(\Hb_N-vI)|^{\ell}|[\Gb ]_{11}^{\ell-k}[\Gb]_{12}^{k}]\E[\bar{R}_N^{\ell-k}\bar{Q}_N^{k}],\label{eqn:ignore-diamond}
		\]
		where we have used the independence of $(\bar{V}_N,\bar{U}_N)$, and thus $(\bar{R}_N,\bar{Q}_N)$, from $\bar{G}_N$. As $(\bar{V}_N,\bar{W}_N)\disteq (-\bar{V}_N,\bar{W}_N)$ we see that the terms when $k$ is odd must vanish, so we may write (\ref{eqn:ignore-diamond}) as	\[\sum_{k=0}^{\ell/2}\binom{\ell}{2k}\E[|\det(\Hb_N-vI)|^{\ell}|[\Gb_N ]_{11}^{\ell-2k}[\Gb_N]_{12}^{2k}]\bar{\Delta}_N^k,\;\;\;\;\;\bar{\Delta}_N^k:=\E[\bar{R}_N^{\ell-2k}\bar{Q}_N^{2k}].\]
		Now the argument used above to show (\ref{eqn:ignore-plane-2}) may be easily modified to show
		\[2^{-\ell}\E[|\det(\hat{H}_N-vI)|^{\ell}|\hat{V}_N^t\hat{G}_N\hat{W}_N|^{\ell}]\le \]
		\[\E[|\det(A_{N-1}-vI)|^{\ell}]+\E[|\det(\hat{H}_N-vI)|^\ell]\E[(\hat{S}_N-u)^\ell]\label{eqn:ignore-104},\]
		where $\hat{G}_N$ is defined analogously. Then proceeding similarly again, we achieve as well
		\[
		\E[|\det(\hat{H}_N-vI)|^{\ell}|\hat{V}_N^t\hat{G}_N\hat{W}_N|^{\ell}]=\]\[
		\sum_{k=0}^{\ell/2}\binom{\ell}{2k}\E[|\det(\hat{H}_N-vI)|^{\ell}|[\hat{G}_N ]_{11}^{\ell-2k}[\hat{G}_N]_{12}^{2k}|]\hat{\Delta}_N^k,
		\]
		where again $\hat{\Delta}_N$ is defined analogously. Now as $\Hb_N\disteq\hat{H}_N$ we see that
		\[\E[|\det(\hat{H}_N-vI)|^{\ell}|[\hat{G}_N ]_{11}^{\ell-2k}[\hat{G}_N]_{12}^{2k}]=\E[|\det(\Hb_N-vI)|^{\ell}|[\Gb_N ]_{11}^{\ell-2k}[\Gb_N]_{12}^{2k}|].\]
		Moreover, as $\ell$ is even, we have that $\bar{\Delta}_N^k,\hat{\Delta}_N^k\ge 0$. Thus comparing the determinants has essentially been reduced to comparing $\bar{\Delta}_N^k$ and $\hat{\Delta}_N^k$. By applying Lemma \ref{lem:concentration lemma XY} we see that $\lim_{N\to \infty}\hat{\Delta}_N^k=\tau^{\ell-2k}(1-\tau^2)^{k}$ and 
		\[\lim_{N\to \infty}\bar{\Delta}_N^k=[\Sigma_Z(r)]_{12}^{\ell-2k}([\Sigma_Z(r)]_{11}[\Sigma_Z(r)]_{22}-[\Sigma_Z(r)]_{12}^2)^{k}.\]
		We note that $\Sigma_Z(r)$ is bounded in $r$, and so as $\tau\in(0,1)$ we may find $C>0$ such that
		\[\max_{1\le k\le \ell}\bar{\Delta}_N^k/\hat{\Delta}_N^k\le C.\]
		Enlarging $C$ we may also assume that 
		\[\E[|\bar{S}_N-v|^{\ell}]+\E[|\hat{S}_N-v|^{\ell}]+1\le C.\]
		Combing both of the expressions with the expression for $\det(\bar{M}_N)$ and $\det(A_{N-1}-vI)$ we see that
		\[4^{-\ell}\E[|\det(\bar{M}_N)|^\ell]\le C^2\left(\E[|\det(\hat{H}_N-vI)|^{\ell}]+\E[|\det(A_{N-1}-vI)|^{\ell}]\right).\label{eqn:ignore-109}\]
		The submatrix $\hat{H}_N$ is proportional to a matrix sampled from $GEE(\tau,N-2)$, so that
		\[\E[|\det(\hat{H}_N-vI)|^{\ell}]=\left(\frac{N-2}{N-1}\right)^{\ell(N-2)/2}\E[|\det(A_{N-2}-\sq{(N-1)/(N-2)}vI)|^\ell].\]
		By applying Theorem 1 of \cite{PaxComp} in the case where $|v|<\Einf$ and Proposition \ref{prop:moments of character outside bulk} in the case where $|v|>\Einf$, we see that there is $C$ such that
		\[\E[|\det(A_{N-2}-\sq{(N-1)/(N-2)}vI)|^\ell]\le C\E[|\det(A_{N-1}-vI)|^\ell].\]
		Applying this and Theorem \ref{theorem:moments of character main} to (\ref{eqn:ignore-109}) completes the proof.
	\end{proof}
	
	Before proceeding to the proof of Lemma \ref{lem:delta-estimates-near-zero}, we will need the following precise form of the decomposition given in (\ref{eqn:decom-E}). 
	
	\begin{lemma}
		\label{lem:perturbation lemma for M}
		Let us fix three i.i.d matrices sampled from $GEE(\tau,N-1)$ and denote them \[(A_{N-1}^1,A_{N-1}^2,A_{N-1}).\] Define for $r\in (-1,1)$ and $k=1,2$
		\[A_{N-1}^k(r):=\sq{1-|r|^{p-2}}A_{N-1}^k+\sign(r)^{pk}\sq{|r|^{p-2}}A_{N-1}.\]
		Then there are $(E_N^k(r,u_1,u_2))_{k=1,2}$, correlated to $(A_{N-1},A_{N-1}^1,A_{N-1}^2)$, such that a.s. $\rank(E_N^k(r,u_1,u_2))\le 2$ and which satisfy
		\[(\bar{M}_N^k(r,u_1,u_2))_{k=1,2}\disteq (A_{N-1}^k(r)+E_N^k(r,u_1,u_2)-\hat{u}_{N,k}I)_{k=1,2}.\label{eqn:E-perp-exact}\]
		Additionally there are $c,C>0$ such that
		\[\P \left(\|E_N^k(r,u_1,u_2)\|\ge C(|r|^{p-3}+N^{-1/2}|r|^{p-4})^{1/2}\right)\le Ce^{-cN^{1/2}},\label{eqn:E-van-rate}\]
		and moreover $c,C$ can be chosen so that this holds over any compact subset of $(r,u_1,u_2)\in (-1,1)\times \R^2$.
	\end{lemma}
	
	The construction of $E_N^k(r,u_1,u_2)$ is given at the end of this section, as is the proof of this lemma. It is primarily reliant on the technical computations from Appendix \ref{appendix:covariance-computation}.
	
	We now give the proof of Lemma \ref{lem:delta-estimates-near-zero}. As this provides asymptotics of the second moment up to vanishing order, we must quite carefully control the interplay between the fluctuations of the determinant and the strength of the correlations between the two matrices. The general strategy will be to introduce a series of rare events, and show that away from these events the determinant may be bounded by a perturbative argument using Corollary \ref{corr:determinant of sum bound}. We then show that the contributions coming from each of these events are negligible by estimating their probabilities, and then applying moment estimates. The primary tool will be Lemma \ref{lem:moments of M} as before.
	
	\begin{proof}[Proof of Lemma \ref{lem:delta-estimates-near-zero}]
		As in the proof of Lemma \ref{lem:moments of M}, it suffices to show this for a fixed sequence $(r_N,u_N^1,u_N^2)$, such that $u_N^1,u_N^2\to u$ and such that $|r_N|<C\sq{\log(N)/N}$, which we will omit from the notation. Through the proof we will take constants $C,c>0$, with $C$ assumed to be large, and $c$ taken to be sufficiently small, and both of which are allowed to become larger (or smaller) line by line, but always independently of $N$.
		
		With the notation of Lemma \ref{lem:perturbation lemma for M}, we first define the event
		\[\cal{E}_{\text{large}}=\{s_1(A_{N-1})>C\}\cup \]\[\bigcup_{k=1,2}\left\{\|E_N^k\|>C\left(|r_N|^{(p-3)}+N^{-1/2}|r_N|^{p-4}\right)^{1/2}\right\}\cup \{s_1(A_{N-1}^k)>C\}.\]
		We also define for $k=1,2$ the events
		\[\cal{E}_{\text{small},k}=\{s_{N-1}(A_{N-1}^k-\hat{u}_{N,k}I)<c |r_N|^{(p-3)/2-c}\}.\]
		Noting that
		\[s_{N-1}(A_{N-1}^k(r_N)-\hat{u}_{N,k}I)\ge s_{N-1}(A_{N-1}^k-\hat{u}_{N,k}I)\]
		\[-\left(1-(1-|r_N|^{p-2})^{1/2}\right)s_1(A_{N-1}^k)-|r_N|^{(p-2)/2}s_1(A_{N-1}),\]
		we see that on $\cal{E}_{\text{large}}^c\cap \cal{E}_{\text{small},k}^c$, and for large enough $N$, that
		\[s_{N-1}(A_{N-1}^k(r_N)-\hat{u}_{N,k}I)>c|r_N|^{(p-3)/2-c}.\]
		By Corollary \ref{corr:determinant of sum bound} we have that for $k=1,2$
		\[|\det(\bar{M}_N^k)|\le |\det(A_{N-1}^k(r_N)-\hat{u}_{N,k}I)|\left(1+\frac{\|E_N^k\|}{s_{N-1}(A_{N-1}^k(r)-\hat{u}_{N,k}I)}\right)^2,\]
		Thus conditional on the event $\cal{E}_{\text{large}}^c\cap \cal{E}_{\text{small},1}^c\cap \cal{E}_{\text{small},2}^c$ we have that
		\[|\det(\bar{M}_N^k)|\le |\det(A_{N-1}^k(r_N )-\hat{u}_{N,k}I)|\left(1+C|r_N|^{c/2}\right)^2=\]
		\[|\det(A_{N-1}^k(r_N)-\hat{u}_{N,k}I)|(1+o(1)).\]
		To further simplify the term on the right, we will introduce our final event. Let us denote $K=[2/c]^{-1}$ and $\epsilon=K^{-1}$ so that $\epsilon<c$. We define
		\[\cal{E}_{\text{medium}}=\bigcup_{k=1,2}\bigcup_{m=0}^{K-1}\{s_{N-[N^{\epsilon m}]+1}(A_{N-1}^k-\hat{u}_{N,k}I)>cN^{\epsilon m-1}\}.\]
		We observe that for $1\le m\le K$, the number of $i$ such that $N-[N^{\epsilon (m-1)}]<i\le  N-[N^{\epsilon m}]$ is less than $N^{\epsilon m}$. Then applying Corollary \ref{corr:determinant of sum bound} again we see that
		\[\frac{|\det(A_{N-1}^k(r_N)-\hat{u}_{N,k}I)|}{|\det(A_{N-1}^k-\hat{u}_{N,k}I)|}\le\prod_{i=1}^{N-1}\left(1+\frac{s_i(A_{N-1}^k(r_N)-A_{N-1}^k)}{s_{N-i}(A_{N-1}^k-\hat{u}_{N,k}I)}\right)\]\[ \prod_{i=1}^{N-1}\left(1+\frac{s_1(A_{N-1}^k(r_N)-A_{N-1}^k)}{s_{N-i}(A_{N-1}^k-\hat{u}_{N,k}I)}\right)\le\]\[\prod_{m=0}^{K-1}\left(1+\frac{s_1(A_{N-1}^k(r_N)-A_{N-1}^k)}{s_{N-[N^{\epsilon m}]+1}(A_{N-1}^k-\hat{u}_{N,k}I)}\right)^{N^{\epsilon (m+1)}}.\]
		We note that on $\cal{E}_{\text{large}}^c$ we have that
		\[s_1(A_{N-1}^k(r_N)-A_{N-1}^k)\le \left(1-(1-|r_N|^{p-2}_N)^{1/2}\right)s_1(A_{N-1}^k)\]
		\[+|r_N|^{(p-2)/2}s_1(A_{N-1})\le C|r_N|^{(p-2)/2}.\]
		Thus conditional on the event $\cal{E}_{\text{large}}^c\cap \cal{E}_{\text{medium}}^c\cap \cal{E}_{\text{small},k}^c$ we have that
		\[\frac{|\det(A_{N-1}^k(r_N)-\hat{u}_{N,k}I)|}{|\det(A_{N-1}^k-\hat{u}_{N,k}I)|}\le \]\[C\left(1+C|r_N|^{1/2+c}\right)^{N^{\epsilon}}\times \prod_{m=1}^{K-1} (1+C|r_N|^{(p-2)/2}N^{1-\epsilon m})^{N^{\epsilon (m+1)}}.\]
		As $|r_N|<C\sqrt{\log(N)/N}$, we see that the first term is $1+o(1)$. Similarly, as $(p-2)/4>1+\epsilon$, we see that $|r_N|^{(p-2)/2}N^{1+\epsilon}\to 0$, so that each term in the product on the right is $1+o(1)$. Combining these claims we see that for $k=1,2$ and on $\cal{E}_{\text{large}}^c\cap \cal{E}_{\text{medium}}^c \cap\cal{E}_{\text{small},1}^c\cap \cal{E}_{\text{small},2}^c$ 
		\[|\det(\bar{M}_N^k)|\le |\det(A_{N-1}^k-\hat{u}_{N,k}I)|(1+o(1)).\]
		In particular, we see that to prove both of our desired claims it is sufficient to bound the contribution to the determinant coming from each of these events. 
		
		To begin we will recall some bounds on the probabilities for these events coming from Section \ref{section:elliptic-ensemble-background}. By Lemma \ref{lem:misc tail results on elliptic} we see that $\P(s_1(A_{N-1})>C)\le Ce^{-Nc}$, and similarly for $A_{N-1}^k$. Combining this with the final claim of Lemma \ref{lem:perturbation lemma for M} we conclude that $\P(\cal{E}_{\text{large}})\le Ce^{-cN^{1/2}}$. By applying Lemma \ref{lem:appendix:weird rare outside} we see as well that $\P(\cal{E}_{\text{medium}})\le Ce^{-cN^\epsilon}$. Finally by Lemma \ref{lem:naomov smallest eigenvalue bound}, for $\ell=1,2$ we have that
		\[\P(\cal{E}_{\text{small},\ell})\le C\left(Nr^{(p-3)/2-c}_N+e^{-cN}\right).\label{eqn:ignore-1229}\]
		
		Now to first let us bound the contribution from an arbitrary event $\cal{A}$. We see from the Cauchy-Schwarz inequality that 
		\[\E\left[\prod_{k=1,2}|\det(\bar{M}_N^k)|I_{\cal{A}}\right]\le\P(\cal{A})^{1/2}\E[\prod_{k=1,2}|\det(\bar{M}_N^k)|^2]^{1/2}\le \]\[\P(\cal{A})^{1/2}\prod_{k=1,2}\E\left[|\det(\bar{M}_N^k)|^4\right]^{1/4}.\]
		Employing Lemma \ref{lem:moments of M} we have that
		\[\prod_{k=1,2}\E[|\det(\bar{M}_N^k)|^4]^{1/4}\le CN^{3/2}\prod_{k=1,2}\E[|\det(A_{N-1}^k-\hat{u}_{N,k}I)|].\]
		In particular, if $N^{3/2}\P(\cal{A})^{1/2}\to 0$ we have that
		\[\limsup_{N\to \infty}\left(\frac{\E[\prod_{k=1,2}|\det(\bar{M}_N^k)|I_{\cal{A}}]}{\prod_{k=1,2}\E[|\det(A_{N-1}^k-\hat{u}_{N,k}I)|]}\right)=0.\]
		This holds for both $\cal{A}=\cal{E}_{\text{large}}$ and $\cal{A}=\cal{E}_{\text{medium}}$ in either case, so all that remains is to treat the case of $\cal{E}_{\text{small},\ell}$ for $\ell=1,2$. For this note that for an event $\cal{A}$, which is only dependant on $A_{N-1}^1$, we have that
		\[\E\left[\prod_{k=1,2}|\det(\bar{M}_N^k)|I_{\cal{A}}\right]\le \E[|\det(\bar{M}_N^1)|^2]^{1/2}\E[|\det(\bar{M}_N^2)|^2I_{\cal{A}}]^{1/2}=\]
		\[\left(\prod_{k=1,2}\E[|\det(\bar{M}_N^k)|^2]^{1/2}\right)\P(\cal{A})^{1/2}\le\]\[ CN^{1/4}\P(\cal{A})^{1/2}\prod_{k=1,2}\E[|\det(A_{N-1}^k-\hat{u}_{N,k}I)|]\]
		where in the second to last equality we have employed the independence of $A_{N-1}^1$ from $\bar{M}_N^2$. A similar inequality holds for events only involving $A_{N-1}^2$. In particular, when the event only depends on a single $A_{N-1}^k$, it suffices to show that $N^{1/2}\P(\cal{A})\to 0$. In view of (\ref{eqn:ignore-1229}) this holds for $\cal{E}_{\text{small},k}$ as $\frac{1}{2}+1-\frac{(p-3)}{4}<0$ for $p>9$.
	\end{proof}
	
	With this done, we will now proceed to the proof of Lemma \ref{lem:ratio of second and first moment}, which gives the expansion of the expressions for the first and second moments around $r=0$.
	
	\begin{proof}[Proof of Lemma \ref{lem:ratio of second and first moment}]
		By employing the expression for $\E[\Crit_{N,2}(B,I)]$ given in Lemma \ref{lem:second-moment-exact} and the expression for $\E[\Crit_N(B)]$ given in Lemma \ref{lem:first-moment-exact} we see that
		\[\frac{\E[\Crit_{N,2}(B,I)]}{\E[\Crit_N(B)]^2}=\bar{C}_N^{-2}C_N\int_I\left(\frac{1-r^2}{1-r^{2p-2}}\right)^{\frac{N-2}{2}}\cal{G}(r) R_N(r)dr,\label{eqn:ignore-606-2},\]
		\[R_N(r):=\frac{\E\big[\prod_{k=1,2}|\det(M_N^k(r))|I(U_N^k(r)\in B)]}{\E[|\det(A_{N-1}-\hat{X}_N I)|I(X_N\in B)]^2}.\]
		We note \[\bar{C}_N^{-2}C_N=\vol(S^{N-2})/\vol(S^{N-1}).\]
		By Stirling's approximation we see that
		\[\vol(S^{N-2})/\vol(S^{N-1})=(1+O(N^{-1}))\sq{N/2\pi}.\]
		We also note that as $p\ge 3$, $\cal{G}(r)=1+O(r^2)$. Thus (\ref{eqn:ignore-606-2}) can be rewritten as
		\[(1+O(N^{-1}))\sq{\frac{N}{2\pi}}\int_{(-\rho_N,\rho_N)} e^{-N(\frac{r^2}{2}+O(r^3))+O(r^2)}R_N(r)dr.\label{eqn:ignore-720}\]
		To compute $R_N(r)$ we recall that $\sq{N}(U_N^1(r),U_N^2(r))$ is centered Gaussian vector with covariance matrix $\Sigma_U(r)$, whose inverse is computed in (\ref{eqn:def:U-external field form}). We note that $k_2(r)=O(r^p)$, $k_1(r)-b(r)=O(r^{2p-2})$, and $b(r)=1+O(r^{2p-2})$ so that \[\Sigma_U(r)^{-1}=\frac{1}{p\alpha b(r)}\begin{bmatrix}
			k_1(r)& k_2(r)\\ 
			k_2(r)& k_1(r)
		\end{bmatrix}=(p\alpha)^{-1}I+O(r^{p}).\]
		In particular the density of the event $(U_N^1(r)=u_1,U_N^2(r)=u_2)$ is given by
		\[\frac{1}{\sq{(2\pi)^2\det(\Sigma_U(r))}}\exp\left(-\frac{N}{2}(u_1,u_2)\Sigma_U(r)^{-1}\begin{bmatrix}u_1\\u_2\end{bmatrix}\right)=\]\[\frac{1+O(r^p)}{\sq{(2\pi)^2(p\alpha)^2}}\exp\left(-\frac{N}{2p\alpha}(u_1^2+u_2^2)(1+O(r^p))\right).\]
		So we see conditioning on the energy value in both the numerator and the denominator in $R_N(r)$ that
		\[R_N(r)= e^{-NO(r^3)}\frac{\int_{B\times B}\E\big[\prod_{k=1,2}|\det(\bar{M}_N^k(r,u_1,u_2))|]e^{-\frac{N}{2p\alpha}u_k^2}du_1du_2}{\int_{B\times B}\prod_{k=1,2}\E[|\det(A_{N-1}-\hat{u}_{N,k}I)|]e^{-\frac{N}{2p\alpha}u_k^2}du_1du_2}.\label{eqn:ignore-1800}\]
		We recall the fact that for two arbitrary positive functions $f$ and $g$ on some Euclidean subset $B$, we have that
		\[\frac{\int_{B}f(u)du}{\int_{B}g(u)du}\le \sup_{u\in B}\frac{f(u)}{g(u)}.\label{eqn:ignore-1801}\]
		Thus inserting (\ref{eqn:ignore-1800}) into (\ref{eqn:ignore-720}) and maximizing over the values of $(u_1,u_2)$ using (\ref{eqn:ignore-1801}) completes the proof.
	\end{proof}
	
	Now give a proof of Lemma \ref{lem:stable points}, which is relatively straightforward given Theorem \ref{theorem:moments of character main}.
	
	\begin{proof}[Proof of Lemma \ref{lem:stable points}]
		For the duration of this proof we will adopt the notation \[\Crit_{N,unst}(B)=\Crit_{N}(B)-\Crit_{N,st}(B)\]
		for the total number of unstable points, so that we must show that for $u>\Einf$
		\[\limsup_{N\to \infty}N^{-1}\log\left(\frac{\E[\Crit_{N,unst}((u,\infty))]}{\E[\Crit_{N}((u,\infty))]}\right) <0.\]
		We first observe that
		\[\frac{\E[\Crit_{N,unst}((u,\infty))]}{\E[\Crit_{N}((u,\infty))]}\le \frac{\E[\Crit_{N,unst}((u,\infty))]}{\E[\Crit_{N}((u,u+1))]}\le\]
		\[\frac{\E[\Crit_{N}([u+1,\infty))]+\E[\Crit_{N,unst}((u,u+1))]}{\E[\Crit_{N}((u,u+1))]}.\]
		To study the first term in the right-most expression, we note that as $\Sigma^{p,\tau}$ is strictly decreasing on $[0,\infty)$ (see Remark \ref{remark:strict increasing of phi}) we have by Theorem \ref{theorem:fyodorov 1st moment} that
		\[\liminf_{N\to \infty}N^{-1}\log\left(\frac{\E[\Crit_{N}((u,u+1))]}{\E[\Crit_{N}([u+1,\infty)]}\right)>0.\]
		This shows the first term is exponentially small, so in particular it suffices to show that
		\[\limsup_{N\to \infty}N^{-1}\log\left(\frac{\E[\Crit_{N,unst}((u,u+1))]}{\E[\Crit_{N}((u,u+1))]}\right)<0.\label{eqn:ignore-1218}\]
		By conditioning on $X_N$ in Lemma \ref{lem:first-moment-exact} we have that
		\[\E[\Crit_N((u,u+1))]=\bar{C}_N\sq{\frac{N}{p\alpha}}\int_{u}^{u+1} \E[|\det(A_{N-1}-\hat{v}_{N}I)|]e^{-N\frac{v^2}{2p\alpha}}dv.\]
		A similar application of the Kac-Rice formula for stable points gives
		\[\E[\Crit_{N,st}((u,u+1))]=\]
		\[\bar{C}_N\sq{\frac{N}{p\alpha}}\int_{u}^{u+1}\E[|\det(A_{N-1}-\hat{v}_{N}I)|I(\Re(\lam_{N-1}(A_{N-1}))\le   \hat{v}_N)]e^{-N\frac{v^2}{2p\alpha}}dv,\]
		where as above, $\lam_{N-1}(A_{N-1})$ is the eigenvalue with the largest real part. In particular, we may write 
		\[\E[\Crit_{N,unst}((u,u+1))]=\]
		\[\bar{C}_N\sq{\frac{N}{p\alpha}}\int_{u}^{u+1}\E[|\det(A_{N-1}-\hat{v}_{N}I)|I(\Re(\lam_{N-1}(A_{N-1}))>   \hat{v}_N)]e^{-N\frac{v^2}{2p\alpha}}dv.\]
		The event on the right hand side may equivalently be stated as saying that at least one eigenvalue of $A_{N-1}-\hat{v}_NI$ has positive real part.
		
		Taking the ratio these integrands we see that the left hand side of (\ref{eqn:ignore-1218}) is bounded by
		\[\sup_{u<v<u+1}\frac{\E[|\det(A_{N-1}-\hat{v}_{N}I)|I(\Re(\lam_{N-1}(A_{N-1}))>  \hat{v}_N)]}{\E[|\det(A_{N-1}-\hat{v}_{N}I)|]}\le \]
		\[\sup_{u<v<u+1}\P(\Re(\lam_{N-1}(A_{N-1}))>  \hat{v}_N)^{1/2}\left(\frac{\E[|\det(A_{N-1}-\hat{v}_{N}I)|^2]^{1/2}}{\E[|\det(A_{N-1}-\hat{v}_{N}I)|]}\right).\]
		The second term on the right is polynomial, so we focus on bounding the first. We note that for $u<v<u+1$
		\[\P(\Re(\lam_{N-1}(A_{N-1}))> \hat{v}_N)\le \P(\Re(\lam_{N-1}(A_{N-1}))>\hat{u}_N).\]
		Now using the inequalities 
		\[\Re(\lam_{N-1}(A_N-1))\le |\lam_{N-1}(A_{N-1})|\le \sigma_1(A_{N-1})\] 
		(recall that $\sigma_1(A_{N-1})$ is the largest singular value) we have that
		\[\P(\Re(\lam_{N-1}(A_{N-1}))>\hat{u}_N)\le \P(s_{1}(A_{N-1})>\hat{u}_N).\]
		Now as $u>\Einf$ we see that $\lim_{N\to \infty}\hat{u}_N>1+\tau$. In particular, by Lemma \ref{lem:misc tail results on elliptic} we see that there are $C,c>0$ such that
		\[\P(s_{1}(A_{N-1}))>\hat{u}_N)\le Ce^{-cN}.\]
		Combining this with Theorem \ref{theorem:moments of character main} completes the proof.
	\end{proof}
	
	We now give a proof of Lemma \ref{lem:concentration lemma XY}, which will be followed by basic concentration estimates.
	
	\begin{proof}[Proof of Lemma \ref{lem:concentration lemma XY}]
		We note that
		\[\E[(X_N,Y_N)^2]=N^{-1}(N-1)\Sigma_{12}^2+N^{-1}(\Sigma_{12}^2+2\Sigma_{11}\Sigma_{22})=\Sigma_{12}^2+O(N^{-1}),\]
		and similarly $\E[(X_N,Y_N)^4]=\Sigma_{12}^4+O(N^{-1})$, so that $\Var((X_N,Y_N)^2)=O(N^{-1})$. Similarly we have that $\Var(\|X_N\|^2),\Var(\|Y_N\|^2)=O(N^{-1})$, so if we define the event 
		\[\cal{A}_N=\left\{\min(|(X_N,Y_N)^2-\Sigma_{12}^2|,|\|X_N\|^2-\Sigma_{11}^2|,|\|Y_N\|^2-\Sigma_{22}^2|)>N^{-1/4}\right\}\]
		Chebysev's inequality shows that $\P(\cal{A}_N)\to 0$ as $N\to \infty$. We observe that
		\[\lim_{N\to \infty}\E[f(\|X_N\|^2,\|Y_N\|^2,(X_N,Y_N)^2)I_{\cal{A}^c_N}]=f(\Sigma_{11}^2,\Sigma_{22}^2,\Sigma_{12}^2).\]
		To show the contribution from $\cal{A}_N$ is negligible, we note that
		\[\E[f(\|X_N\|^2,\|Y_N\|^2,(X_N,Y_N)^2)I_{\cal{A}_N}]\le \]\[\P(\cal{A}_N)^{1/2}\E[f(\|X_N\|^2,\|Y_N\|^2,(X_N,Y_N)^2)^2]^{1/2},\]
		so it suffices to show the final term is bounded in $N$. For this, note that as $|(X_N,Y_N)|\le \|X_N\|\|Y_N\|$, so as $f^2$ is of polynomial growth, it suffices to show that for each $\ell\in \N$
		\[\limsup_{N\to \infty}\left(\E[\|X_N\|^{2\ell}]+\E[\|Y_N\|^{2\ell}]\right)<\infty.\]
		Observing that $\|X_N\|^2\disteq \chi_{N}^2$, where $\chi_N^2$ is a $\chi^2$-random variable, this follows from the estimate above (\ref{eqn:chi-bound}).
	\end{proof}
	
	We complete this section by providing the construction of $E^{k}_N(r,u_1,u_2)$ and a proof of Lemma \ref{lem:perturbation lemma for M}. To begin, we fix three i.i.d matrices sampled from $GEE(\tau,N-1)$ and denote them $A_{N-1}^1,A_{N-1}^2$ and $A_{N-1}$. For $k=1,2$ (using the matrix $\Sigma_S(r)$ defined around (\ref{eqn:S-def})), we define
	\[
	Z_{N-1,N-1}^k(r):=\]
	\[\left(\sign([\Sigma_S(r)]_{12})^k|[\Sigma_S(r)]_{12}|^{1/2}-\sign(r)^{pk}\sq{|r|^{p-2}}\right)[A_{N-1}]_{(N-1)(N-1)}
	\]
	\[
	+\left(\left([\Sigma_S(r)]_{11}-[\Sigma_S(r)]_{12}\right)^{1/2}-\sq{1-|r|^{p-2}}\right)[A_{N-1}^k]_{(N-1)(N-1)}.
	\]
	Note that as $\Sigma_S(r)$ is positive semi-definite, with $[\Sigma_{S}(r)]_{11}=[\Sigma_{S}(r)]_{22}$, we must have that $[\Sigma_S(r)]_{11}-[\Sigma_S(r)]_{12}\ge 0$, so all expressions here make sense.
	
	Now for $1\le i\le N-2$ we will be using the $4$-by-$4$ the matrix $\Sigma_Z(r)$ defined around (\ref{eqn:def:SigmaZ}). This is composed of square $2$-by-$2$ block matrices $\Sigma_Z^{k,l}(r)$. As such the definition will be structurally the same as $Z_{N-1,N-1}^k(r)$, but involving matrix-valued quantities, which we understand in the spectral sense. Primarily, we need note that as $\Sigma_Z(r)$ is positive semi-definite and $\Sigma^{1,1}_Z(r)=\Sigma^{2,2}_Z(r)$, one sees by evaluating on vectors of the form $(u,v,-u,-v)$ that $\Sigma^{1,1}_Z(r)-\Sigma^{1,2}_Z(r)$ is positive semi-definite. With these prerequisites, we define
	\[
	\begin{bmatrix}Z_{N-1,i}^k(r)\\Z_{i,N-1}^k(r) \end{bmatrix}:=\]\[
	\left(\sign(\Sigma^{1,2}_Z(r))^k|\Sigma^{1,2}_Z(r)|^{1/2}
	\begin{bmatrix}
		1&\tau\\
		\tau& 1
	\end{bmatrix}^{-1/2}
	-\sign(r)^{pk}\sq{|r|^{p-2}}I\right)\begin{bmatrix}[A_{N-1}]_{(N-1)i}\\ [A_{N-1}]_{i(N-1)}\end{bmatrix}\]
	\[
	+\left((\Sigma^{1,1}_Z(r)-\Sigma^{1,2}_Z(r))^{1/2}
	\begin{bmatrix}
		1&\tau\\
		\tau& 1
	\end{bmatrix}^{-1/2}
	-\sq{1-|r|^{p-2}}I\right)\begin{bmatrix}[A_{N-1}^k]_{(N-1)i}\\ [A_{N-1}^k]_{i(N-1)}\end{bmatrix}.
	\]
	We finally define an $(N-1)$-by-$(N-1)$ matrix $E_N^k(r,u_1,u_2)$ by defining for $1\le i\le N-2$
	\[[E_N^k(r,u_1,u_2)]_{i(N-1)}=Z^k_{i,N-1}(r),\;\;\;[E_N^k(r,u_1,u_2)]_{(N-1)i}=Z^k_{N-1,i}(r),\]
	\[[E_N^k(r,u_1,u_2)]_{(N-1)(N-1)}=m^k_{1}(r)\hat{u}_{N,1}+m^k_{2}(r)\hat{u}_{N,2}+Z^k_{N-1,N-1}(r),\]
	and letting all other entries be zero.
	
	We first verify the claim (\ref{eqn:E-perp-exact}). As both sides are Gaussian, it will suffices to verify that the Gaussian vector $(A_{N-1}^k(r)+E_N^k(r,u_1,u_2)-\hat{u}_{N,k}I)_{k=1,2}$ has the mean and covariance structure specified by Lemma \ref{lem:jacobian-covariance-statement-with-energy}. The nontrivial claims are those involving the covariance matrices of $(V_N^k(r),W_N^k(r))_{k=1,2}$ and $(S_N^k(r,u_1,u_2))_{k=1,2}$. For this, we note that for $1\le i\le N-2$
	\[\begin{bmatrix}
		1&\tau\\
		\tau& 1
	\end{bmatrix}^{-1/2}\begin{bmatrix}[A_{N-1}]_{(N-1)i}\\ [A_{N-1}]_{i(N-1)}\end{bmatrix}\disteq \cal{N}(0,I),\]
	and similarly for entries involving $A_{N-1}^k$. From this we see that the covariance matrix of $([A_{N-1}^k(r)+E_N^k(r,u_1,u_2)]_{i(N-1)},[A_{N-1}^k(r)+E_N^k(r,u_1,u_2)]_{(N-1)i})_{k=1,2}$ for $1\le i\le N-2$ is given by
	\[\begin{bmatrix}
		\Sigma^{1,1}_Z(r)& \Sigma^{1,2}_Z(r)\\
		\Sigma^{1,2}_Z(r)^t& \Sigma^{1,1}_Z(r)\\
	\end{bmatrix}.\]
	Thus recalling that $\Sigma_Z^{1,1}(r)=\Sigma_Z^{2,2}(r)$ and $\Sigma_Z^{1,2}(r)=\Sigma_Z^{2,1}(r)^t$ verifies the first claim. The claim for $(S_N^k(r,u_1,u_2))_{k=1,2}$ is similar, which establishes (\ref{eqn:E-perp-exact}).
	
	To complete the proof of Lemma \ref{lem:perturbation lemma for M}, we need to prove the statements about $E_N^k(r,u_1,u_2)$. The claim involving the rank is obvious, so we focus on the latter. The primary tool is the following result, proven in Appendix \ref{appendix:covariance-computation}, which quantifies the rate at which $E_N^k(r,u_1,u_2)$ vanishes as $r\to 0$.
	
	\begin{lemma}
		\label{lem:vanishing rate of E} 
		For $\epsilon>0$ there is $C>0$ such that for $|r|<1-\epsilon$, $k,l=1,2$, and $1\le i\le N-2$ 
		\[\E[(Z_{N-1,i}^k(r))^2]\le CN^{-1}r^{p-3},\;\;\E[(Z_{i,N-1}^k(r))^2]\le CN^{-1}r^{p-3},\label{eqn:ignore-druid-1}\]
		\[\E[(Z_{N-1,N-1}^k(r))^2]\le CN^{-1}r^{p-4},\;\; |m^k_{l}(r)|\le Cr^{p-3}.\label{eqn:ignore-druid-2}\]
	\end{lemma}
	
	\begin{proof}[Proof of Lemma \ref{lem:perturbation lemma for M}]
		
		As noted, we only need to show (\ref{eqn:E-van-rate}). We note that
		\[\|E^k_N(r,u_1,u_2)\|^2_F=\sum_{i=1}^{N-2}\left(Z_{i,N-1}^k(r)^2+Z_{N-1,i}^k(r)^2\right)+\]
		\[(m_1^k(r)\hat{u}_{N,1}+m_2^k(r)\hat{u}_{N,2}+Z^k_{N-1,N-1}(r))^2.\]
		By Lemma \ref{lem:vanishing rate of E} and the independence of these elements, we see that for large enough $C$, $\|E^k_N(r)\|^2_F$ is stochastically dominated by
		\[CN^{-1}\chi_{N-1}^2r^{p-3}+C(\hat{u}_{N,1}+\hat{u}_{N,2})^2r^{p-2}+CN^{-1}r^{p-4}X^2\]
		where $\chi^2_N$ is a $\chi^2$-random variable of parameter $N$ and $X$ is a standard Gaussian random variable. By appealing to Cram\'{e}r's Theorem, we see that for $C>1$ there is $c>0$ such that $\P(N^{-1}\chi_{N-1}^2\ge C)\le Ce^{-cN}$. More simply note that there are $c,C$ such that $\P(X^2\ge N^{1/2})\le Ce^{-cN^{1/2}}$. Combining these statements yields the final claim.
		
	\end{proof}
	\section{Proofs of Lemmas \ref{lem:diagonal-lemma-intro} and \ref{lem:analytic equality result}\label{section:lemmas about functions}}
	
	The purpose of this section is to give proofs of the main analytic lemmas through which we understand the complexity functions. This will primarily involve understanding the points which achieve their supremums, and as such will primarily be shown through explicit but non-trivial calculus.
	
	The results are analogs of results of \cite{pspin-second,subag-ofer-new} for the case $\tau=1$. Our main method to establish them for general $\tau$ will be to reduce to the cases of $\tau=1$ and $\tau=-(p-1)^{-1}$, with the latter often being surprisingly simple. We will begin with the following prerequisite for the proof of Lemma \ref{lem:diagonal-lemma-intro}
	\begin{lemma}
		\label{lem:diagonal-lemma-old}
		Define $\Sigma_2^{p,\tau}(r,u):=\Sigma_2^{p,\tau}(r,u,u)$. Then if $B$ is a connected open subset of either $(-\infty, -\Einf)$, $(-\Einf,\Einf)$, or $(\Einf,\infty)$, we have for any $r\in (-1,1)$ that
		\[\sup_{u_1,u_2\in B}\Sigma_2^{p,\tau}(r,u_1,u_2)= \sup_{u\in B}\Sigma_2^{p,\tau}(r,u).\]
	\end{lemma}
	
	\begin{proof}
		This will follow from studying the behavior of the function $\Sigma_2^{p,\tau}(r,u_1,u_2)$ in terms of the variable $\nu=(u_1-u_2)$. Let us first treat the case of $B\subset (-\infty,-\Einf)$. Note that $\phi_\tau$ is concave on $(-\infty,-1-\tau)$, as this set is outside of $\cal{E}_\tau$. Noting as well that $\Sigma_U(r)$ is positive-definite by construction, we see that $\Sigma_2^{p,\tau}(r,u_1,u_2)$ is concave in $(u_1,u_2)$ in the region $(-\infty,-\Einf)^2$, and in particular, it is concave in the variable $\nu=u_1-u_2$, restricted to this region. Now as the function $\Sigma_2^{p,\tau}(r,u_1,u_2)$ is symmetric in $(u_1,u_2)$, we see that $\frac{d}{d\nu}\Sigma_2^{p,\tau}(r,u,u)=0$, so that for $u_1,u_2\in (-\infty,-\Einf)$, we have that \[\Sigma_2^{p,\tau}(r,u_1,u_2)\le \Sigma_2^{p,\tau}\left(r,\frac{u_1+u_2}{2},\frac{u_1+u_2}{2}\right).\]
		This demonstrates the claim in the case that $B\subset (-\infty,-\Einf)$, and an identical argument demonstrates the claim for $B\subset (\Einf,\infty)$.
		
		In the case where $B\subset (-\Einf,\Einf)$, we see by the above argument that it is sufficient to show for $u_1,u_2\in (-\Einf,\Einf)$ that
		\[\frac{d^2}{d\nu^2}\Sigma_2^{p,\tau}(r,u_1,u_2)< 0.\label{eqn:ignore-416}\]
		We note that
		\[\frac{d^2}{d\nu^2}\Sigma_2^{p,\tau}(r,u_1,u_2)=-[\Sigma_U(r)^{-1}]_{11}-[\Sigma_U(r)^{-1}]_{22}+2[\Sigma_U(r)^{-1}]_{12}+\]\[\frac{1}{p(p-1)}\phi_\tau''\left(\frac{u_1}{\sq{p(p-1)}}\right)+\frac{1}{p(p-1)}\phi_\tau''\left(\frac{u_2}{\sq{p(p-1)}}\right)=\]
		\[\frac{2}{p\alpha b(r)}(k_2(r)-k_1(r))+\frac{2}{p(p-1)(1+\tau)}\]
		where in the last step we have used (\ref{eqn:def:U-external field form}). The function $b$ occurs as the determinant of the matrix (\ref{eqn:ignore-vic}), which itself is the inverse of a matrix block in the inverse of the matrix (\ref{eqn:ignore-vector}) by the Schur complement formula. As this is a covariance matrix, we see that $b$ is positive. Thus by the positivity of $b$, (\ref{eqn:ignore-416}) is equivalent to the claim
		\[j_{\tau,p}(r):=\alpha b(r)+(1+\tau)(p-1)(k_2(r)-k_1(r))< 0.\]
		We first show that for $|\tau|<1$ and $|r|<1$ we have that
		\[1-r^{2p-2}+\tau (p-1) r^{p-2}(1-r^2)>0.\label{eqn:ignore-423}\]  
		For this, we observe that
		\[1-r^{2p-2}-(p-1)|r|^{p-2}(1-r^2)=(1-r^2)\bigg(\sum_{i=0}^{p-2}r^{2i}-(p-1)|r|^{p-2}\bigg)>0,\label{eqn:ignore-939}\]
		as we have that 
		\[\frac{1}{p-1}\sum_{i=0}^{p-2}r^{2i}> |r|^{p-2},\label{eqn:AM-GM}\]
		by the AM-GM inequality applied to the sequence $\{r^{2i}\}_{i=0}^{p-2}$. This establishes (\ref{eqn:ignore-423}).
		
		Now we observe that
		\[k_1(r)-k_2(r)=b(r)+r^p(1+r^{p-2})(1-r^{2p-2}+\tau(p-1)r^{p-2}(1-r^2)),\]
		\[b(r)=(1-r^{2p-2}-\tau (p-1)r^{p-2}(1-r^2))(1-r^{2p-2}+\tau (p-1)r^{p-2}(1-r^2)).\label{eqn:ignore-699}\]
		We have just shown that the common factor of these two terms is positive, so it suffices to show that
		\[\bar{j}_{\tau,p}(r):=(1-r^{2p-2}+\tau (p-1)r^{p-2}(1-r^2))^{-1}j_{\tau,p}(r)=\]
		\[-(p-2)(1-r^{2p-2}-\tau (p-1)r^{p-2}(1-r^2))-(1+\tau)(p-1)r^p(1+r^{p-2})<0.\]
		Now we observe that $\bar{j}_{\tau,p}(r)$ is linear in $\tau$, so it suffices to verify this claim for $\tau\in \{-(p-1)^{-1},1\}$. For $\tau=-(p-1)^{-1}$ we have that
		\[\bar{j}_{-(p-1)^{-1},p}(r)=-(p-2)(1+r^{p-2})< 0.\]
		The case of $\tau=1$ is treated by \cite{subag-ofer-new}. Specifically, $-\bar{j}_{1,p}(r)=h(r)$, where $h$ is introduced in (2.10) of \cite{subag-ofer-new}, which they then show is positive.
	\end{proof}
	\begin{remark}
		\label{remark:strict increasing of phi}
		We note that when $r=0$ we have that $\Sigma_2^{p,\tau}(0,u_1,u_2)=\Sigma^{p,\tau}(u_1)+\Sigma^{p,\tau}(u_2)$. In particular, the concavity in $u_1-u_2$ established in the above proof shows that the function $\Sigma^{p,\tau}(u)$ is strictly concave for $u\in (\Einf,\infty)$ and $u\in (-\Einf,\Einf)$. As the function is even, this implies it is strictly decreasing on $[0,\Einf)$. As in addition $\phi_\tau'(1+\tau)=1$, we see that
		\[(\Sigma^{p,\tau})'(\Einf)=-\frac{(1+\tau)\sq{p(p-1)}}{p\alpha}+\frac{\phi_\tau'(1+\tau)}{\sq{p(p-1)}}=-\frac{(p-2)}{\alpha\sq{p(p-1)}}<0,\]
		so that we see that in fact $\Sigma^{p,\tau}$ is strictly decreasing on $(0,\infty)$. In addition, as the function is even, it must be strictly increasing on $(-\infty,0)$.
	\end{remark}
	
	\begin{proof}[Proof of Lemma \ref{lem:diagonal-lemma-intro}]
		We observe that as $\Crit_N(\{\pm\Einf\})=0$ a.s, it suffices to treat the case where we can write $B=\coprod_{i=1}^{m}B_i$ as a disjoint union of intervals which satisfy the conditions of Lemma \ref{lem:diagonal-lemma-old}. We note that as 
		\[\Crit_N(B)=\sum_{i=1}^m\Crit_N(B_i),\]
		we may apply the Cauchy-Schwartz inequality to obtain that
		\[\E[\Crit_{N}(B)^2]\le m^2\sum_{i=1}^m\E[\Crit_{N}(B_i)^2],\]
		so that 
		\[\limsup_{N\to \infty}N^{-1}\log \E[\Crit_{N}(B)^2]\le \max_{1\le i\le m}\limsup_{N\to \infty}N^{-1}\log\E[\Crit_{N}(B_i)^2].\]
		We also note that $\Crit_{N,2}(B_i,\{\pm 1\})\le 2\Crit_{N}(B_i)$, so by Theorems \ref{theorem:fyodorov 1st moment} and \ref{theorem:KR-2point}, as well as Lemma \ref{lem:diagonal-lemma-old}
		\[\limsup_{N\to \infty} N^{-1}\log(\E[\Crit_{N}(B_i)^2])\le \max\left(\sup_{u\in B_i,\; r\in (-1,1)}\Sigma_2^{p,\tau}(r,u),\sup_{u\in B_i}\Sigma^{p,\tau}(u))\right).\]
		Thus we see that
		\[\limsup_{N\to \infty}N^{-1}\log(\E[\Crit_{N}(B)^2])\le \]\[\max_{1\le i\le m}(\sup_{u\in B_i,\; r\in (-1,1)}\max(\Sigma_2^{p,\tau}(r,u),\Sigma^{p,\tau}(u)))=\]
		\[\sup_{u\in B,\;r\in (-1,1)}\max(\Sigma_2^{p,\tau}(r,u),\Sigma^{p,\tau}(u)).\]
		Now we note that $\Sigma^{p,\tau}_2(0,u)=2\Sigma^{p,\tau}(u)$, which is positive somewhere on $B$ by assumption. In particular,
		\[\sup_{u\in B,\; r\in (-1,1)}\max(\Sigma_2^{p,\tau}(r,u),\Sigma^{p,\tau}(u))=\sup_{u\in B,\; r\in (-1,1)}\Sigma_2^{p,\tau}(r,u),\]
		which completes the proof.
	\end{proof}
	
	To prove Lemma \ref{lem:analytic equality result}, we need the following identification for the maximizers of $\Sigma_2^{p,\tau}(r,u)$.
	\begin{lemma}
		\label{lem:sup-iden}
		Let us define 
		\[\uth:=\sq{(1+\tau)\frac{\log(p-1)(p-1)p\alpha}{(p-2)}}.\]
		Then we may extend $\Sigma_2^{p,\tau}(r,u)$, for fixed $u$, to a continuous function of $r\in (-1,1]$, which we denote by $\bar{\Sigma}_2^{p,\tau}(r,u)$, and such that:
		\begin{itemize}
			\item For $|u|=\uth$ and $r\in (-1,1)\setminus\{0\}$ we have that
			\[\Sigma_2^{p,\tau}(r,u)<\Sigma_2^{p,\tau}(0,u)=2\Sigma^{p,\tau}(0,u).\]
			\item For $|u|<\uth$ and $\epsilon>0$, we have $\sup_{|r|\ge \epsilon}\Sigma_2^{p,\tau}(r,u)<\Sigma_2^{p,\tau}(0,u)$.
			\item For $|u|>\uth$ and $\epsilon>0$, we have $\sup_{|r|\le 1-\epsilon}\Sigma_2^{p,\tau}(r,u)<\bar{\Sigma}_2^{p,\tau}(1,u)$.
		\end{itemize}
	\end{lemma}
	
	\begin{proof}[Proof of Lemma \ref{lem:sup-iden}]
		We begin by simplifying the expressions in $\Sigma_2^{p,\tau}$. Employing (\ref{eqn:def:U-external field form}) we may write
		\[\frac{1}{2}(1,1)\Sigma_U(r)^{-1}\begin{bmatrix}1 \\ 1 \end{bmatrix}=\frac{k_1(r)+k_2(r)}{p\alpha b(r)}.\label{eqn:ignore-353}\]
		Using the expression for $k_1,k_2$ and $b$ given after (\ref{eqn:def:U-external field form}) we see that
		\[k_1(r)+k_2(r)-b(r)=(r^{2p-2}-r^p)(1-r^{2p-2}-(p-1)\tau r^{p-2}(1-r^2)).\]
		By this and (\ref{eqn:ignore-699}) we see we may cancel a common factor in the expression (\ref{eqn:ignore-353}), so that
		\[\frac{k_1(r)+k_2(r)}{b(r)}=1-g^{p,\tau}(r),\;\; g^{p,\tau}(r):=\frac{r^{p}(1-r^{p-2})}{1-r^{2p-2}+(p-1)\tau r^{p-2}(1-r^2)}.\]
		If we define then
		\[\;\;Q^{p,\tau}(r,u):=h(r)+\frac{u^2}{p\alpha}g^{p,\tau}(r),\]
		we see that
		\[\Sigma_2^{p,\tau}(r,u)=Q^{p,\tau}(r,u)+1+\log(p-1)-\frac{u^2}{p\alpha}+2\phi_{\tau,p}\left(u\right).\]
		We see it suffices to prove the corresponding statements for $Q^{p,\tau}(r,u)$, as the other terms are constant in $r$. To prove the existence of the desired continuous extension, we note that
		\[\lim_{r\to 1}Q^{p,\tau}(r,u)=-\frac{1}{2}\log(p-1)\left(1-\frac{u^2}{\uth^2}\right).\label{eqn:ignore-Qp}\]
		For notational ease, we will denote the extension of $Q^{p,\tau}$ to $r\in (-1,1]$ again by $Q^{p,\tau}$. 
		
		We next verify that it suffices to show the claims when $r\in [0,1)$. When $p$ is even, this follows from the fact that $Q^{p,\tau}$ is even in $r$. When $p$ is odd, it will suffice to show that for $r\in (0,1)$, $g^{p,\tau}(r)>0$ and for $r\in (-1,0)$, $g^{p,\tau}(r)<0$. On the other hand by (\ref{eqn:AM-GM}) the denominator is positive, so this follows by inspection of the numerator. In either case, we may now restrict to $r\in [0,1)$.
		
		Next we will show that $g^{p,\tau}$ is increasing in $r\in (0,1)$. We first note that
		\[\frac{d}{dr}g^{p,\tau}(r)=\frac{r^{p-1}\bar{h}^{p,\tau}(r)}{(1-r^{2p-2}+(p-1)\tau r^{p-2}(1-r^2))^2},\]
		where here
		\[\bar{h}^{p,\tau}(r):=(p-(2p-2)r^{p-2})(1-r^{2p-2}+(p-1)\tau r^{p-2}(1-r^2))+\]\[(r-r^{p-1})((2p-2)r^{2p-3}-(p-1)\tau r^{p-3}((p-2)-pr^2)).\]
		To show that $g^{p,\tau}$ is increasing it suffices to show that $\bar{h}^{p,\tau}(r)>0$ for $r\in (0,1)$. As $\bar{h}^{p,\tau}(r)$ is linear in $\tau$, it suffices to verify this in the cases that $\tau=1$ and $\tau=-(p-1)^{-1}$. Routine calculation gives that
		\[\bar{h}^{p,1}(r)=p(1-r^{2p-2}-(p-1)r^{2p-4}(1-r^2))\ge p(1-r^{2p-2}-(p-1)|r|^{p-2}(1-r^2))>0,\]
		and similarly that
		\[\bar{h}^{p,-(p-1)^{-1}}(r)=p(1-r^{p-2})^2>0\]
		This completes the verification that $g^{p,\tau}$ is increasing.
		
		We note from (\ref{eqn:ignore-Qp}) that $Q^{p,\tau}(1,\uth)=0=Q^{p,\tau}(0,\uth)$. We will now reduce to the case of $u=\uth$. Indeed, given the result in this case, we note that for $|u|<\uth$, and $r\in (0,1)$, we have that
		\[Q^{p,\tau}(r,u)<Q^{p,\tau}(r,\uth)\le Q^{p,\tau}(0,\uth)= Q^{p,\tau}(0,u),\]
		where in the first inequality we have used that $g^{p,\tau}(r)>0$. Similarly if $|u|>\uth$ then for $r\in (0,1)$
		\[Q^{p,\tau}(1,u)=Q^{p,\tau}(1,\uth)+\frac{(u^2-\uth^2)}{p\alpha}g^{p,\tau}(1)\ge\]\[ Q^{p,\tau}(r,\uth)+\frac{(u^2-\uth^2)}{p\alpha}g^{p,\tau}(1)=\]
		\[Q^{p,\tau}(r,u)+\frac{(u^2-\uth^2)}{p\alpha}(g^{p,\tau}(1)-g^{p,\tau}(r))\ge Q^{p,\tau}(r,u),\]
		where in the last step we have used that $g^{p,\tau}(r)$ is increasing in $r$. This completes the reduction to the case where $u=\uth$.

		Let us denote $\bar{Q}^{p,\tau}(r):=Q^{p,\tau}(r,\uth)$, so that we now only need to show that for $r\in (0,1)$, we have that $\bar{Q}^{p,\tau}(r)<0$. Our final reduction will be to the case where $\tau=1$. To do this, we may compute
		\[\frac{d}{d\tau}\bar{Q}^{p,\tau}(r)=\frac{d}{d\tau}\bigg(\frac{\uth^2}{p\alpha}g^{p,\tau}(r)\bigg)=\frac{(p-1)\log(p-1)}{(p-2)}\frac{d}{d\tau}\bigg((1+\tau)g^{p,\tau}(r)\bigg)=\]
		\[\frac{(p-1)\log(p-1)}{(p-2)}\bigg(\frac{r^p(1-r^{p-2})(1-r^{2p-2}-\tau(p-1)r^{p-2}(1-r^2))}{(1-r^{2p-2}+(p-1)\tau r^{p-2}(1-r^2))^2}\bigg).\]
		In particular, we see from this and (\ref{eqn:ignore-939}) we see that $\frac{d}{d\tau}\bar{Q}^{p,\tau}(r)>0$, so it suffices to verify the case where $\tau=1$. This coincides with the case addressed in \cite{pspin-second}. In particular, our $\bar{Q}^{p,1}(r)$ coincides with their $Q_p^{u_{th}(p)}$, which they define in their Lemma 7 and Eqn. (6.12). In particular, the verification that $\bar{Q}^{p,1}(r)<0$ for $r\in (0,1)$ now follows from their verification that $Q^{u_{th}(p)}_p(r)<0$, given in their proof of Lemma 7 (more specifically, following their Eqn. 6.15).
	\end{proof}
	
	We are now able to give the proof of Lemma \ref{lem:analytic equality result}.
	
	\begin{proof}[Proof of Lemma \ref{lem:analytic equality result}]
		To begin, we treat the values outside of $(-\Ezero,\Ezero)$ beginning with those outside $(-\uth,\uth)$. We see by Lemma \ref{lem:sup-iden} and evenness of $\bar{\Sigma}^{p,\tau}_2$ that
		\[\sup_{|v|\ge \uth ,r\in (-1,1), }\Sigma_2^{p,\tau}(r,v)=\sup_{v\ge\uth }\bar{\Sigma}^{p,\tau}_2(1,v).\]
		We will show for $v\ge \uth$ that
		\[\bar{\Sigma}^{p,\tau}_2(1,v)<0.\label{eqn:ignore-239}\]
		Before establishing this fact, we will complete the proof assuming (\ref{eqn:ignore-239}). Using (\ref{eqn:ignore-239}) we see that
		\[\sup_{|v|\ge\uth, r\in (-1,1)}\bar{\Sigma}^{p,\tau}_2(r,v)\le 0.\label{eqn:ignore-alex}\]
		By applying Lemma \ref{lem:sup-iden} again, and the fact that $\Sigma_2^{p,\tau}(0,v)=2\Sigma^{p,\tau}(v)$ is negative when $|v|>\Ezero$, we further see that 
		\[\sup_{|v|\ge \Ezero, r\in (-1,1) }\Sigma^{p,\tau}_2(r,v)\le 0. \label{eqn:ignore-alex-2}\]
		Now a final application of Lemma \ref{lem:sup-iden} shows that if $0\in \bar{I}$ than
		\[\sup_{v\in B\cap (-\Ezero,\Ezero),r\in I}\Sigma_2^{p,\tau}(r,v)=\sup_{v\in B\cap (-\Ezero,\Ezero) }\Sigma_2^{p,\tau}(0,v),\]
		and as $B$ intersects $(-\Ezero,\Ezero)$, we see further that this is a strict inequality if $0\notin \bar{I}$. Moreover, as $\Sigma_2^{p,\tau}(0,v)$ is positive if $v\in (-\Ezero,\Ezero)$, we see if $0\in \bar{I}$ that
		\[\sup_{v\in B,r\in I}\Sigma_2^{p,\tau}(r,v)=\sup_{v\in B\cap (-\Ezero,\Ezero),r\in I}\Sigma_2^{p,\tau}(r,v),\]
		and if $0\notin \bar{I}$ that
		\[\sup_{v\in B,r\in I}\Sigma_2^{p,\tau}(r,v)\le \max\left(\sup_{v\in B\cap (-\Ezero,\Ezero),r\in I}\Sigma_2^{p,\tau}(r,v),0\right)<2\sup_{v\in B}\Sigma^{p,\tau}(v).\]
		In particular, this and the previous result completes the proof once we establish (\ref{eqn:ignore-239}).
		
		To do this we first need a preliminary fact about $\phi_\tau$. We note that the function $\phi_\tau$ is concave for $u\in (1+\tau,\infty)$. We note as well that $\phi_\tau(1+\tau)=\tau/2$ and $\phi_\tau'(1+\tau)=1$, so by concavity we have that for $u>1+\tau$
		\[\phi_\tau(u)\le \frac{\tau}{2}+(u-(1+\tau))= \frac{u^2}{2(1+\tau)}-\frac{1}{2}-\frac{1}{2(1+\tau)}(u-(1+\tau))^2\le \frac{u^2}{2(1+\tau)}-\frac{1}{2}.\]
		As this bound is an equality for $|u|\le 1+\tau$, and the function $\phi_\tau$ is even, we see that for any $u\in \R$
		\[\phi_\tau(u)\le \frac{u^2}{2(1+\tau)}-\frac{1}{2}.\label{eqn:ignore-430}\]
		Employing (\ref{eqn:ignore-Qp}) we see that we may write
		\[\bar{\Sigma}_2^{p,\tau}(1,u)=\frac{\log(p-1)}{2}\left(\frac{u^2}{\uth^2}-1\right)+1+\log(p-1)-\frac{u^2}{p\alpha}+2\phi_{\tau,p}\left(u\right).\]
		By (\ref{eqn:ignore-430}) we have that
		\[2\phi_{\tau,p}\left(u\right)\le-1+\frac{u^2}{p(p-1)(1+\tau)}=-1+\frac{\log(p-1)\alpha}{(p-2)}\frac{u^2}{\uth^2},\]
		so we see that
		\[\bar{\Sigma}_2^{p,\tau}(1,u)\le \frac{1}{2}\log(p-1)+\frac{u^2}{\uth^2}\bigg(\frac{\log(p-1)}{2}-\frac{\log(p-1)(p-1)(1+\tau)}{(p-2)}+\]\[\frac{\log(p-1)\alpha}{(p-2)}\bigg)=\frac{\log(p-1)}{2}\left(1-\frac{u^2}{\uth^2}\right),\]
		which verifies (\ref{eqn:ignore-239}).
	\end{proof}
	
	\section{Random Matrix Theory and Elliptic Ensembles \label{section:elliptic-ensemble-background}}
	
	In this section we will prove the majority of the random matrix results we need concerning the elliptic ensembles. Our first goal will be to give a proof of Lemma \ref{lem:main elliptic ensemble bound}, which gives the behavior of the characteristic polynomial on the exponential scale. Our other main result will be Proposition \ref{prop:moments of character outside bulk}, which gives our bounds on the characteristic polynomial outside of the bulk. In the course of proving these, we will also need an overcrowding result to control the behavior of the smallest singular value. This is given by Lemma \ref{lem:naomov smallest eigenvalue bound}. The majority of the results in this section will follow from applying standard Gaussian concentration inequalities to the forms of the elliptic law given in \cite{naumov,rourke-elliptic-2}. 
	
	To begin we recall some general facts from linear algebra. The first of these is the Hoffman-Wielandt inequality (see Lemma 2.1.19 of \cite{cupbook}). This states that for $n$-by-$n$ symmetric matrices $A$ and $B$ we have that
	\[\sum_{i=1}^n |\lam_i(A)-\lam_i(B)|^2\le \Tr (A-B)^2=\|A-B\|^2.\]
	We also recall that for an $n$-by-$n$ matrix $M$, the eigenvalues of the matrix
	\[\Mb:=\begin{bmatrix}0& M\\ M^t & 0 \end{bmatrix}\label{eqn:def-symmetrization-matrix}\]
	are given by $-s_1(M),\dots ,-s_n(M),s_n(M),\dots, s_1(M)$. Combining these two statements we see that for general $n$-by-$n$ matrices $A$ and $B$ we have that
	\[\sum_{i=1}^n|s_i(A)-s_i(B)|^2\le \|A-B\|^2.\label{eqn:hoffman-2}\]
	From this statement, we conclude the following result, which may be compared to Lemma 2.3.1 of \cite{cupbook}.
	\begin{lemma}
		\label{lem:lipschitz-norm} \cite{cupbook}
		Let $f:[0,\infty)\to \R$ be a Lipschitz function of norm less than $C$. Then for any $z\in \R$ the function on $N$-by-$N$ matrices given by
		\[A\mapsto \int_0^\infty f(x)\nu_{A,z}(dx)\]
		has Lipschitz norm bounded by $C/\sq{N}$ (with respect to the Frobenius norm)
	\end{lemma} 
	\begin{proof}
		If $A$ and $B$ are two $N$-by-$N$ matrices then
		\[|\int_0^\infty f(x)\nu_{A,z}(dx)-\int_0^\infty f(x)\nu_{B,z}(dx)|\le  \frac{C}{N}\sum_{i=1}^N |s_i(A-zI)-s_i(B-zI)|\le\]\[ \frac{C}{\sq{N}}\left(\sum_{i=1}^N |s_i(A-zI)-s_i(B-zI)|^2\right)^{1/2}\le \frac{C}{\sq{N}}\|A-B\|^2.\]
	\end{proof}
	We will also need a bound on the determinant of the sum of two matrices. In the case of symmetric matrices this follows from the main result of \cite{bounds-on-det-sum}, and for non-symmetric case it follows by applying the symmetric case to (\ref{eqn:def-symmetrization-matrix}).
	\begin{corollary}
		\label{corr:determinant of sum bound} \cite{bounds-on-det-sum}
		Let $A$ and $B$ be $n$-by-$n$ matrices. Then
		\[|\det(A+B)|\le \prod_{i=1}^{n}(s_i(A)+s_{n+1-i}(B)).\]
		In particular, if $\rank(B)\le d$ then
		\[|\det(A+B)|\le |\det(A)|\left(1+\frac{s_1(B)}{s_n(A)}\right)^d.\]
	\end{corollary}
	
	Next we will need to recall some results for the empirical measures $\nu_{A_N,z}$. In the course of proving a general form for the elliptic law, it was shown in \cite{naumov} (more specifically, in their proof of Theorem 5.2) that for fixed $z$, the sequence $\nu_{A_N,z}$ converges a.s. to some deterministic limit $\nu_z$. This measure is studied more extensively in Appendix C of \cite{rourke-elliptic-2}. We will collect some of their results that we will need as the following lemma.
	\begin{lemma}
		\label{lem:limit measure of v_N}
		\cite{naumov,rourke-elliptic-2}
		For any $|\tau|<1$, and fixed $z\in \R$ we have that
		\[\E[\nu_{A_N,z}]\To \nu_z\label{eqn:convergence of nu_z}\]
		where $\nu_z$ is a certain ($\tau$-dependent) measure, characterized by Lemma C.6 of \cite{rourke-elliptic-2}. Moreover, the convergence in (\ref{eqn:convergence of nu_z}) is uniform for $z$ in any compact subset of $\R$, and the measure $\nu_z$ is compactly-supported with a continuous density. Finally, this measure is uniformly bounded in the following sense: for any $K>0$ we may find $C>0$ such that if we denote the density of $\nu_z$ as $\bar{\nu}_z$ then
		\[\sup_{|z|\le K,x\in \R}|\bar{\nu}_z(x)|\le C.\label{eqn:bound of nu}\]
	\end{lemma}
	\begin{proof}
		Except for the uniformity claims, all of these results directly follow from Lemma C.7 of \cite{rourke-elliptic-2} and the discussion immediately prior. Uniform boundedness of the density follows from observing that all bounds used in their proof of Lemma C.7 hold uniformly in compact subsets of $z$. Finally, to establish the uniformity claim for the convergence in (\ref{eqn:convergence of nu_z}) we observe that for $x,y\in\R$
		\[\frac{1}{N}\sum_{i=1}^N\left| s_i(A_N-xI)-s_i(A_N-yI)\right|\le\]
		\[\frac{1}{\sq{N}}\left(\sum_{i=1}^N(s_i(A_N-xI)-s_i(A_N-yI))^2\right)^{1/2}\le |x-y|\frac{\|I\|}{\sq{N}}=|x-y|,\label{eqn:hoff-to}\]
		where in the last step we have used (\ref{eqn:hoffman-2}). Taking expectations, this statement easily gives the desired uniformity.
	\end{proof}
	
	We now note that the limiting analog of the identity
	\[|\det(A-zI)|=\sq{|\det((A-zI)^t(A-zI))|},\]
	yields the following continuum relation (see as well Lemma 2.3 of \cite{naumov})
	\[\int_0^{\infty} \log(x)\nu_z(dx)=\phi_\tau(z).\label{eqn:continuum-girko}\]
	We will need the following result which will follow from this and the uniform boundedness of $\nu_z$.
	\begin{lemma}
		\label{lem:uniformity of v_z in epsilon}
		For any $|\tau|<1$ and $K>0$ we have that
		\[\lim_{\epsilon\to 0}\sup_{|z|\le K}\left|\int_0^{\infty} \log_{\epsilon}(x)\nu_z(dx)- \phi_\tau(z)\right|=0.\label{eqn:uniformity of v_z in epsilon}\]
	\end{lemma}
	\begin{proof}
		We observe that by (\ref{eqn:continuum-girko}) we have that
		\[|\int_0^{\infty} \log_{\epsilon}(x)\nu_z(dx)- \phi_\tau(z)|=|\int_0^{\epsilon}\log(x/\epsilon)\nu_z(dx)|.\]
		By (\ref{eqn:bound of nu}) we see that there is $C>0$ such that for $|z|<K$
		\[|\int_0^{\epsilon}\log(x/\epsilon)\nu_z(dx)|\le C\epsilon\int_0^1\log(|x|)dx.\]
		Together these establish (\ref{eqn:uniformity of v_z in epsilon}).
	\end{proof}
	
	We will now give a concentration estimate for $\int_0^{\infty}\log_{\epsilon}(x)\nu_{A_N,z}(dx)$.
	\begin{lemma}
		\label{lem:log-sob bound}
		For any $\epsilon>0$ and $\ell>0$ we have that
		\[
		\E[\exp(N\ell \int_0^{\infty}\log_{\epsilon}(x)\nu_{A_N,z}(dx))]\le \exp\left(N\ell\int_0^\infty \log_{\epsilon}(x)\E[\nu_{A_N,z}](dx)+8\ell^2\epsilon^{-2}\right).\label{eqn:ignore-proof-311}
		\]
	\end{lemma}
	\begin{proof}
		We observe that all entries of $A_N$ are independent except for the pairs $([A_N]_{ij},[A_N]_{ji})$. For each $i,j$, the pair $([A_N]_{ij},[A_N]_{ji})$ satisfies the log-Sobolev inequality with constant $4/N$, by the Bakry-Emery criterion \cite{Log-Sobolev}. Thus $A_N$, considered as a random vector in $\R^{N^2}$, also satisfies the log-Sobolev inequality with constant $4/N$. We also note that $\log_{\epsilon}$ has Lipschitz constant $1/(2\epsilon)$, so by Lemma \ref{lem:lipschitz-norm} the function
		\[A\mapsto \int_0^\infty \log_{\epsilon}(x)\nu_{A,z}(dx)\]
		has Lipschitz constant bounded by $1/(2\epsilon\sq{N})$.
		Thus (\ref{eqn:ignore-proof-311}) now follows by Herbst's argument (see section 2.3 of \cite{Log-Sobolev}).
	\end{proof}
	
	To make use of this, we will need some tail estimates for the largest singular value. By Corollary 2.3 of \cite{rourke-elliptic-2} $s_1(A_N)$ converges to $1+|\tau|$ a.s.. From this, we are able to obtain the following result by the Borell-TIS inequality (see Theorem 2.1.1 of \cite{AT}).
	
	\begin{lemma}
		\label{lem:misc tail results on elliptic}
		For $|\tau|<1$ and any $\epsilon>0$ we have that
		\[\lim_{N\to \infty}\E[s_1(A_N)]=1+|\tau|,\]	
		\[\lim_{N\to \infty}\E[I(s_1(A_N)>1+|\tau|+\epsilon)s_1(A_N)]e^{N\epsilon^2/32}=0.\]
		In particular, we have that
		\[\lim_{N\to \infty}\P(s_1(A_N)>1+|\tau|+\epsilon)e^{N\epsilon^2/32}=0.\]
	\end{lemma}
	\begin{proof}
		To begin we note that
		\[s_1(A_N)=\sup_{x,y\in S^{N-1}}|x^tA_Ny|.\]
		The function $x^tA_Ny$ is a centered Gaussian process on $S^{N-1}\times S^{N-1}$ with covariance
		\[\E[(x_1^tA_Ny_1)(x_2^tA_Ny_2)]=N^{-1}\left[(x_1,x_2)(y_1,y_2)+\tau(x_1,y_1)(x_2,y_2)\right].\]
		We observe that on $S^{N-1}\times S^{N-1}$ this is bounded by $2/N$. Thus by applying the Borell-TIS inequality we see that for any $u>0$
		\[\P(|s_1(A_N)-\E[s_1(A_N)]|\ge u)\le 2\exp(-Nu^2/4).\]
		In particular by the Borel-Cantelli lemma, for any $\epsilon>0$ we have a.s.
		\[\limsup_{N\to \infty}|s_1(A_N)-\E[s_1(A_N)]|<\epsilon.\]
		As we also know that $s_1(A_N)$ converges to $(1+\tau)$ a.s. we obtain the first claim.
		
		Now fix $\epsilon>0$ and take $N$ sufficiently large that $\E[s_1(A_N)]\le 1+|\tau|+\epsilon/2$. Then
		\[\E[I(s_1(A_N)>1+|\tau|+\epsilon)s_1(A_N)]=\int_{1+|\tau|+\epsilon}^{\infty}\P(s_1(A_N)>x)dx\le\]
		\[ \int_{\epsilon/2}^{\infty}\P(|s_1(A_N)-\E[s_1(A_N)]|>x)dx\le 2\int_{\epsilon/2}^\infty \exp(-Nx^2/4)dx,
		\]
		from which the second bound immediately follows. Lastly the final claim follows by Markov's inequality.
	\end{proof}
	
	With these two results, we are able to obtain the following uniform result.
	
	\begin{lemma}
		\label{lem:weaker elliptic bound}
		For any choices of $|\tau|<1$, $\ell>0$, $\epsilon>0$ and $K>0$ we have that
		\[\lim_{N\to \infty}\sup_{|z|<K}\bigg|(N\ell)^{-1}\log(\E[\exp(N\ell \int_0^{\infty}\log_{\epsilon}(x)\nu_{A_N,z}(dx))])\]
		\[-\int_0^{\infty}\log_{\epsilon}(x)\nu_{z}(dx)\bigg|= 0.\label{eqn:lemma:stronger upper bound of det-new}\]
		Moreover for $0<\epsilon<1$ we have that
		\[\lim_{N\to \infty}\sup_{|z|\ge 1}\left((N\ell)^{-1}\log(\E[\exp(N\ell
		\int_0^{\infty}\log_{\epsilon}(x)\nu_{A_N,z}(dx))])-(2+\log(|z|))\right)\le 0.\label{eqn:lemma:weak upper bound of det}\]
	\end{lemma}
	\begin{proof}
		By Jensen's inequality we have that
		\[\int_0^{\infty}\log_{\epsilon}(x)\E[\nu_{A_N,z}](dx)\le (N\ell)^{-1}\log(\E[\exp(N\ell \int_0^{\infty}\log_{\epsilon}(x)\nu_{A_N,z}(dx))]).\]
		Combining this with Lemma \ref{lem:log-sob bound} we see that
		\[\lim_{N\to \infty}\sup_{|z|<K}\bigg|(N\ell)^{-1}\log(\E[\exp(N\ell \int_0^{\infty}\log_{\epsilon}(x)\nu_{A_N,z}(dx))])\]\[-\int_0^{\infty}\log_{\epsilon}(x)\E[\nu_{A_N,z}](dx)\bigg|= 0.\]
		Thus to show (\ref{eqn:lemma:stronger upper bound of det-new}) it suffices to show that
		\[\limsup_{N\to \infty}\sup_{|z|<K}\left|\int_0^{\infty}\log_{\epsilon}(x)\E[\nu_{A_N,z}](dx)-\int_0^{\infty}\log_{\epsilon}(x)\nu_{z}(dx)\right|=0.\label{eqn:lemma:stronger upper bound of det-old}\]
		For this we define for $C>\epsilon$,
		\[\log_\epsilon^C(x)=\min(\log_\epsilon(x),\log(C)).\]
		We observe that	
		\[\left|\int_0^{\infty}\log_{\epsilon}(x)\E[\nu_{A_N,z}](dx)-\int_0^\infty \log_{\epsilon}^C(x)\E[\nu_{A_N,z}](dx)\right|\le\] \[\E[I(s_1(A_N-zI)>C)\log(s_1(A_N-zI)/C)]\le\]
		\[\E[I(s_1(A_N)>C-|z|)(s_1(A_N)+|z|-\log(C))],\]
		where in the last line we have used that $s_1(A_N-zI)\le s_1(A_N)+|z|$ and that $\log(x)\le x$. Taking $C$ sufficiently large we see by Lemma \ref{lem:misc tail results on elliptic} that
		\[\limsup_{N\to \infty}\sup_{|z|<K}\left|\int_0^{\infty}\log_{\epsilon}(x)\E[\nu_{A_N,z}](dx)-\int_0^{\infty}\log_{\epsilon}^C(x)\E[\nu_{A_N,z}](dx)\right|=0.\label{eqn:ignore-111}\]
		This as well as the uniform convergence in Lemma \ref{lem:limit measure of v_N} completes the proof of (\ref{eqn:lemma:stronger upper bound of det-old}), and thus of the first claim.
		
		To show (\ref{eqn:lemma:weak upper bound of det}) we note that for $|z|>\max(1,\epsilon)$,
		\[\int_0^\infty \log_{\epsilon}(x)\E[\nu_{A_N,z}](dx)\le \E[\log_{\epsilon}(s_1(A_N-zI))]\le \E[\log_{\epsilon}(s_1(A)+|z|)]=\]\[\E[\log(s_1(A_N)+|z|)]\le \E[s_1(A_N)]+\log(|z|).\]
		where in the last inequality we have used that for $x,y\in (0,\infty)$
		\[\log(x+y)=\log(x)+\log(1+y/x)\le y/x+\log(x).\]
		Using \ref{lem:misc tail results on elliptic} we may conclude (\ref{eqn:lemma:weak upper bound of det}) as before.
	\end{proof}
	
	The proof of Lemma \ref{lem:main elliptic ensemble bound} now immediately follows from Lemmas \ref{lem:limit measure of v_N} and \ref{lem:weaker elliptic bound}.
	
	Our next step will be to provide an overcrowding result. The method is essentially Theorem 3.1 of \cite{naumov}, which is based on the ideas of Mark Rudelson and Roman Vershynin \cite{overcrowding1, overcrowding2}. As we are in the case where the entries are Gaussian though, we are able to get a significantly stronger result by modifying the proof at a key point.
	
	\begin{lemma}
		\label{lem:naomov smallest eigenvalue bound}
		For any $\tau\in (0,1)$, there are fixed $C,c>0$ such that for any $\epsilon>0$ and $u\in \R$
		\[\P\left(s_N(A_N-uI)\le \epsilon \right)\le C (\epsilon N+e^{-cN}).\label{eqn:ignore-129292}\]
	\end{lemma}
	
	\begin{proof}
		In this proof $C,c>0$ will be, respectively, a large and small constant, which is allowed to be increased (resp. decreased) line by line, but always assumed to be independent of the choice of $\epsilon$, $u$, and $N$.
		
		We first deal with the case of $u$ is far outside of the bulk. For this note that
		\[s_N(A_{N}-uI)\ge s_N(uI)-s_1(A_N)=|u|-s_1(A_N). \label{ignore-1}\]
		Thus if $|u|>4$, we have that
		\[\P(s_N(A_N-uI)\le 1)\le \P(|u|-s_1(A_N)\le 1)\le \P(3\le s_1(A_N))\le Ce^{-cN}\]
		where the last inequality follows from Lemma \ref{lem:misc tail results on elliptic}. This demonstrates (\ref{eqn:ignore-129292}) for $\epsilon\in (0,1]$ and $u\in \R\setminus [-4,4]$. However, as long as $C\ge 1$, (\ref{eqn:ignore-129292}) is trivially true for $\epsilon>1$ and any $u$. Thus we are reduced to reduced to showing the case for $u\in [-4,4]$.
		
		Next we will recall a number of results from \cite{naumov} which they use to obtain their small singular value result (their Theorem 3.1). To ease translation between our notations, we note that in their proof of Theorem 3.1 in section 3 they consider a matrix $A=X-zI$. Their $X$ corresponds to $\sqrt{N}A_N$ in our notation (note also, we are in the Gaussian case, and they are considering the general Wigner case). They then obtain results for $z=O(\sqrt{N})$. So considering $z=\sqrt{N}u$, we may write their $A$ as $\sqrt{N}(A_N-uI)$. We also note that their $n$ is our $N$.
		
		With this noted, we first combine Lemmas 3.7 and 3.8 of \cite{naumov}. For this, let us denote the $i$-th column vector of $\sqrt{N}(A_N-uI)$ as $A_i$, and denote by $H_i\subset \R^N$ the subspace spanned by all columns of $\sqrt{N}(A_N-uI)$ except the $i$-th. Then following the proofs in Section 3 of \cite{naumov} \footnote{Note that the term $n^{-1}$ occurring in the first term of (3.2) in Lemma 3.8 is a typo and should be $n^{-1/2}$, as this is what the original Lemma 3.5 of \cite{overcrowding1} claims, which is what they are citing.} up to Lemma 3.8 implies that for any $K>1$ we have that
		\[\P \left(\sqrt{N}s_N(A_N-uI)\le \epsilon N^{-1/2},\sqrt{N}s_1(A_N-uI)\le 3K\sqrt{N}\right)\le\]\[C\left(e^{-cN}+\sum_{i=1}^{N}\P(d(A_i,H_i)\le C\epsilon)\right)\label{eqn:ingore-29332}\]
		where $d(A_i,H_i)$ denotes the distance of the vector $A_i$ to the subspace $H_i$. By Lemma \ref{lem:misc tail results on elliptic}, and for any $u\in [-4,4]$, we have that
		\[\P(s_1(A_N-uI)\ge 7)\le \P(s_1(A_N)\ge 7-|u|)\le \P(s_1(A_N)\ge 3)\le  Ce^{-cN},\label{eqn:ignorefk-fs}\]
		so we may obtain from (\ref{eqn:ingore-29332}) that
		\[\P(s_N(A_N-uI)\le \epsilon N^{-1})\le C\left(e^{-cN}+\sum_{i=1}^{N}\P(d(A_i,H_i)\le C\epsilon)\right).\label{eqn:masdinf}\]
		
		We now bound the second term on the right, taking $i=1$ for simplicity. If we denote a unit vector orthogonal to $H_1$ as $h$, they note that
		\[d(A_1,H_1)\ge |(h,A_1)|.\]
		To understand better $h$, we notate the components of $\sqrt{N}(A_N-uI)$ in terms of $1$-by-$(N-1)$ blocks as
		\[\sqrt{N}(A_N-uI)=\begin{bmatrix} a_{11}& V^T\\ U & B  \end{bmatrix}.\]
		It is here our proof diverges from \cite{naumov}. Let us condition on the event $(V=v,B=b)$ for some $v\in \R^{N-1}$ and matrix $b\in \R^{(N-1)^2}$, and denote the conditional law of $(A_1,a_{11},U)$ as $(\bar{A}_1,\bar{a}_{11},\bar{U})$. First note that the entries of the vector $\bar{A}_1=(\bar{a}_{11},\bar{U})$ remain independent Gaussian random variables. Moreover, we note that $a_{11}$ is unaffected by this conditioning, so that $\bar{a}_{11}\disteq \cal{N}(-\sqrt{N}u,1+\tau)$. As are correlated $U_i$ and $V_i$, an easy calculation shows that $\bar{U}_i\disteq \cal{N}(\tau v_i, 1-\tau^2)$. We also observe that after we have made this conditioning, $h$ is a deterministic unit vector. Thus in particular, writing out the inner-product as
		\[(\bar{A}_1,h)=a_{11}h_1+\sum_{i=1}^{N-1}\bar{U}_ih_{i+1}\]
		we see that $(\bar{A}_1,h)$ is a Gaussian random variable. In particular, if we define $\mu(h):=-\sqrt{N}u+\sum_{i=1}^{N-1}\tau v_ih_{i+1}$ and $\sigma(h)^2:=(1+\tau)h_1^2+(1-\tau^2)\sum_{i=2}^{N}h_{i}^2$, then 
		$(\bar{A}_1,h)\disteq \cal{N}(\mu(h),\sigma(h)^2)$. In particular, if we denote the probability with respect to our conditioning as $\P_{v,b}$, and let $X$ be in independent standard Gaussian random variable, then 
		\[\P_{v,b}(|(\bar{A}_1,h)|\le \epsilon)=\P(|\sigma(h)X+\mu(h)|\le \epsilon).\]
		This is a small ball probability that is easily computed. As $h$ is a unit vector, we see that $\sigma(h)^2\ge (1-\tau^2)$. Moreover, as $X$ has a density bounded by $(2\pi)^{-1/2}\le 2^{-1}$, we see that
		\[\P(|\sigma(h)X+\mu(h)|\le \epsilon)\le \sup_{y\in \R}\P\left(|X-y|\le \frac{\epsilon}{\sigma(h)}\right)\le  \frac{\epsilon}{\sigma(h)}\le \frac{\epsilon}{(1-\tau^2)^{1/2}}.\]
		In particular, we have that
		\[\P_{v,b}(|(\bar{A}_1,h)|\le \epsilon)\le \frac{\epsilon}{(1-\tau^2)^{1/2}}.\]
		As the right is simply a constant, taking expectations over $(V,B)$, we get the bound
		\[\P(d(A_1,H_1)\le \epsilon)\le \P(|(A_1,h)|\le \epsilon)\le \frac{\epsilon}{(1-\tau^2)^{1/2}}.\]
		Combined with (\ref{eqn:masdinf}) this completes the proof.
	\end{proof}
	
	With all of these results at hand, we may obtain the following bound on the characteristic polynomials outside of the bulk.
	\begin{proposition}
		\label{prop:moments of character outside bulk}
		For $\tau\in (0,1)$, $\ell>0$, and $|x|>1+\tau$, there is $C>0$ such that
		\[C^{-1}\le \frac{\E[|\det(A_N-xI)|^{\ell}]}{\E[|\det(A_N-xI)|]^{\ell}}\le C.\label{eqn:first-bound}\]
		Moreover, for any fixed $k\in \N$, we can also find $C>0$ such that
		\[C^{-1}\le \frac{\E[|\det(A_N-xI)|^{\ell}]}{\E[|\det(A_{N-k}-xI)|^{\ell}]}\le C.\label{eqn:second-bound}\]
		Lastly, both $C$ may be chosen to be uniform in compact subsets of $x$.
		
	\end{proposition}
	\begin{proof}
		To begin we take a fixed $\epsilon>0$ and recall that
		\[\E[|\det(A_N-xI)|^{\ell}]\le \E \exp\left(N\ell\int_0^\infty \log_{\epsilon}(y)\nu_{A_N,x}(dy)\right).\]
		By Jensen's inequality and Lemma \ref{lem:log-sob bound} for $C\ge \exp(8\ell^2\epsilon^{-2})$, we have that
		\[\exp\left(N\ell\int_0^\infty \log_{\epsilon}(y)\E[\nu_{A_N,x}](dy)\right)\le \E \exp\left(N\ell\int_0^\infty \log_{\epsilon}(y)\nu_{A_N,x}(dy)\right)\le\]
		\[ C \exp\left(N\ell\int_0^\infty \log_{\epsilon}(y)\E[\nu_{A_N,x}](dy)\right).\]
		Now using (\ref{ignore-1}),
		\[
		\P(\nu_{A_N,x}([0,\epsilon])\neq 0)=\P(s_N(A_N-xI)\le \epsilon)\le \P(|x|-\epsilon \le s_1(A_N)).
		\]
		Thus if we choose $\epsilon<|x|-1-\tau$, we see by Lemma \ref{lem:misc tail results on elliptic} there is $C,c>0$ such that
		\[\E[\nu_{A_N,x}]([0,\epsilon])\le \P(\nu_{A_N,x}([0,\epsilon])\neq 0)\le \P(|x|-\epsilon \le s_1(A_N))\le Ce^{-cN}.\]
		In particular, for the limiting measure we see that
		\[\nu_x([0,\epsilon])=\lim_{N\to \infty}\E[\nu_{A_N,x}]([0,\epsilon])=0.\] 
		Thus by the above results we have that
		\[\lim_{N\to \infty}\int_0^\infty \log_{\epsilon}(y)\E[\nu_{A_N,x}](dy)=\int_0^\infty \log(y)\nu_{x}(dy)=\phi_\tau(x).\]
		In particular, this shows that on the exponential scale, making the replacement of $\log$ by $\log_\epsilon$ without taking $\epsilon\to 0$ should yield a tight upper-bound, which makes sense as no singular values should greatly exceed $1+\tau$. However to push this to the constant scale will be more technical, and we first must replace $\epsilon$ with a much smaller value. 
		
		For this first note that for $\delta\in (0,\epsilon)$, we have the crude bound
		\[\left|\int_0^\infty \log_{\epsilon}(y)\E[\nu_{A_N,x}](dy)-\int_0^\infty \log_{\delta}(y)\E[\nu_{A_N,x}](dy)\right|\le \log(\epsilon/\delta)\E[\nu_{A_N,x}]([0,\epsilon]).\]
		In particular, if we take $\delta_N:=e^{-N^{10}}$, then we have (potentially for larger $C$) that
		\[\log(\epsilon/\delta_N)\E[\nu_{A_N,x}]([0,\epsilon])\le CN^{10}e^{-cN}.\]
		In particular, adjusting $C,c>0$, we have that
		\[\left|\int_0^\infty \log_{\epsilon}(y)\E[\nu_{A_N,x}](dy)-\int_0^\infty \log_{\delta_N}(y)\E[\nu_{A_N,x}](dy)\right|\le Ce^{-cN}, \label{eqn:krie-2}\]
		so enlarging $C>0$ again we have that
		\[\exp\left(N\ell\int_0^\infty \log_{\delta_N}(y)\E[\nu_{A_N,x}](dy)\right)\le \exp\left(N\ell\int_0^\infty \log_{\epsilon}(y)\E[\nu_{A_N,x}](dy)\right)\le\]
		\[ C \exp\left(N\ell\int_0^\infty \log_{\delta_N}(y)\E[\nu_{A_N,x}](dy)\right).\label{eqn:krie}\]
		In particular for larger $C>0$ we have that
		\[\E[|\det(A_N-xI)|^{\ell}]\le C \exp\left(N\ell\int_0^\infty \log_{\delta_N}(y)\E[\nu_{A_N,x}](dy)\right).\]
		We will show that we also have that
		\[\E[|\det(A_N-xI)|^{\ell}]\ge C^{-1} \exp\left(N\ell\int_0^\infty \log_{\delta_N}(y)\E[\nu_{A_N,x}](dy)\right). \label{ignore-viet}\]
		For this we define the event 
		\[\cal{E}_N=\{s_N(A_N-uI)>\delta_N\}.\]
		By Lemma \ref{lem:naomov smallest eigenvalue bound} there is $C,c>0$ such that $\P(\cal{E}_N^c)\le Ce^{-cN}$. We note that by excluding the event $\cal{E}_N^c$, we get the bound
		\[\E[|\det(A_N-xI)|^{\ell}]\ge \E [\exp\left(N\ell \int_0^\infty \log_{\delta_N}(y)\nu_{A_N,x}(dy)\right)I_{\cal{E}_N}].\]
		We may rewrite the right hand side as 
		\[\E \left[\exp\left(N\ell \int_0^\infty \log_{\delta_N}(y)\nu_{A_N,x}(dy)\right)\right]-\E \left [\exp\left(N\ell\int_0^\infty \log_{\delta_N}(y)\nu_{A_N,x}(dy)\right)I_{\cal{E}_N^c}\right].\label{eqn:icofof}\]
		We show that the second term is negligible compared to the first. Indeed, by Jensen's inequality we have that
		\[\E \left[\exp\left(N\ell \int_0^\infty \log_{\delta_N}(y)\nu_{A_N,x}(dy)\right)\right]\ge \exp\left(N\ell\int_0^\infty \log_{\delta_N}(y)\E[\nu_{A_N,x}](dy)\right),\label{eqn:fwiuw1}\] and by the Cauchy-Schwarz inequality we have that
		\[\E \left [\exp\left(N\ell\int_0^\infty \log_{\delta_N}(y)\nu_{A_N,x}(dy)\right)I_{\cal{E}_N^c}\right]\le \]\[\E \left [\exp\left(N2\ell\int_0^\infty \log_{\delta_N}(y)\nu_{A_N,x}(dy)\right)\right]^{1/2}\P(\cal{E}_N^c)^{1/2}\le \]
		\[\E \left [\exp\left(N2\ell\int_0^\infty \log_{\epsilon}(y)\nu_{A_N,x}(dy)\right)\right]^{1/2}\P(\cal{E}_N^c)^{1/2}.\label{eqn:ifjijfe} \]
		Proceeding as in the above work, we see that
		\[\lim_{N\to \infty}N^{-1}\log \E \left [\exp\left(N2\ell\int_0^\infty \log_{\epsilon}(y)\nu_{A_N,x}(dy)\right)\right]^{1/2}=\ell\int_0^\infty \log(y)\nu_{x}(dy),\]
		and similarly that
		\[\lim_{N\to \infty} N^{-1}\log \exp\left(N\ell\int_0^\infty \log_{\delta_N}(y)\E[\nu_{A_N,x}](dy)\right)=\ell\int_0^\infty \log(y)\nu_{x}(dy).\]
		In particular (\ref{eqn:fwiuw1}), (\ref{eqn:ifjijfe}), and the bound $\P(\cal{E}_N^c)\le Ce^{-cN}$ together show that
		\[\limsup_{N\to \infty}\frac{\E \left [\exp\left(N\ell \int_0^\infty \log_{\delta_N}(y)\nu_{A_N,x}(dy)\right)I_{\cal{E}_N^c}\right]}{\exp\left(N\ell \int_0^\infty \log_{\delta_N}(y)\E[ \nu_{A_N,x}](dy)\right)}=0.\]
		Thus with (\ref{eqn:icofof}) we have shown (\ref{ignore-viet}), so altogether,
		\[C^{-1}\le \frac{\E[|\det(A_N-xI)|^{\ell}]}{ \exp\left(N\ell \int_0^\infty \log_{\delta_N}(y)\E [\nu_{A_N,x}](dy)\right)}\le C.\label{eqn:12345}\]
		Note this suffices to establish (\ref{eqn:first-bound}) as employing (\ref{eqn:12345}) in the case of $\ell=1$ we obtain that
		\[C^{-\ell}\le \frac{\E[|\det(A_N-xI)|]^{\ell}}{ \exp\left(N\ell \int_0^\infty \log_{\delta_N}(y)\E [\nu_{A_N,x}](dy)\right)}\le C^\ell,\]
		so using (\ref{eqn:12345}) again in the case of our chosen $\ell$ gives (\ref{eqn:first-bound}).
		
		Next, we show how the second claim, (\ref{eqn:second-bound}), follows if we show that for fixed $0<\epsilon<1-|\tau|$,
		\[|N\int_0^{\infty}\log_{\epsilon}(y)\E[\nu_{A_N,x}](dy)-(N-k)\int_0^{\infty}\log_{\epsilon}(y)\E[\nu_{A_{N-k},x}](dy)|\le C. \label{eqn:ignore-2}\]
		Note that by using (\ref{eqn:12345}) twice, we see that (\ref{eqn:second-bound}) is equivalent to the claim that
		\[C^{-1}\le \frac{\exp\left(N\ell \int_0^\infty \log_{\delta_{N}}(y)\E [\nu_{A_N,x}](dy)\right)}{ \exp\left((N-k)\ell \int_0^\infty \log_{\delta_{N-k}}(y)\E [\nu_{A_{N-k},x}](dy)\right)}\le C,\]
		or equivalently that
		\[|N\int_0^{\infty}\log_{\delta_{N}}(y)\E[\nu_{A_N,x}](dy)-(N-k)\int_0^{\infty}\log_{\delta_{N-k}}(y)\E[\nu_{A_{N-k},x}](dy)|\le C.\]
		However, by using (\ref{eqn:krie-2}) twice, we see that 
		\[|N\int_0^{\infty}\log_{\delta_{N}}(y)\E[\nu_{A_N,x}](dy)-(N-k)\int_0^{\infty}\log_{\delta_{N-k}}(y)\E[\nu_{A_{N-k},x}](dy)|\le\]
		\[2C+|N\int_0^{\infty}\log_{\epsilon}(y)\E[\nu_{A_N,x}](dy)-(N-k)\int_0^{\infty}\log_{\epsilon}(y)\E[\nu_{A_{N-k},x}](dy)|.\]
		Thus (\ref{eqn:second-bound}) follows immediately from this if we show (\ref{eqn:ignore-2}).
		
		To prepare to show (\ref{eqn:ignore-2}), note that by employing the tails estimates of Lemma \ref{lem:misc tail results on elliptic}, we see that for $K>0$ sufficiently large
		\[\lim_{N\to \infty}|N\int_K^{\infty}\log_{\epsilon}(y)\E[\nu_{A_N,x}](dy)|=0.\]
		Let us denote the principal $(N-k)$-by-$(N-k)$ submatrix of $A_N$ by $\bar{A}_N$. We note by the interlacing property for the singular values of a submatrix, we have for any $a\in \R$
		\[|N\nu_{A_N,x}([0,a])-(N-k)\nu_{\bar{A}_N,x}([0,a])|\le k.\]
		In particular, by bounding $\log_{\epsilon}(y)$ by its maximum, we see that
		\[\left|N\int_0^{K}\log_{\epsilon}(y)\E[\nu_{A_N,x}](dy)-(N-k)\int_0^{K}\log_{\epsilon}(y)\E[\nu_{\bar{A}_N,x}](dy)\right|\le \]
		\[2k\left|\log_{\epsilon}(0)+\log_{\epsilon}(K)\right|.\]
		Now we note the distributional equality
		\[\eta_{N,k}\bar{A}_N\disteq A_{N-k},\;\;\; \eta_{N,k}=\sq{\frac{N}{N-k}}.\]
		In particular, we see that
		\[\int_0^{K}\log_{\epsilon}(y)\E[\nu_{\bar{A}_N,x}](dy)=\int_0^{\eta_{N,k}K}\log_{\epsilon}(\eta_{N,k}^{-1}y)\E[\nu_{A_{N-k},\eta_{N,k}x}](dy).\]
		Using (\ref{eqn:hoff-to}) and noting that $\eta_{N,k}=1+O(N^{-1})$ one may check that for large $C$
		\[N\left|\int_0^{\eta_{N,k}K}\log_{\epsilon}(\eta_{N,k}^{-1}y)\E[\nu_{A_{N-k},\eta_{N,k}x}](dy)-\int_0^{K}\log_{\epsilon}(y)\E[\nu_{A_{N-k},x}](dy)\right|\le C.\]
		Combining these observations completes the proof.
	\end{proof}
	
	We end this section by noting the following result which follows immediately from the proof of Lemma 4.4 of \cite{naumov}.
	
	\begin{lemma}
		\label{lem:appendix:weird rare outside}\cite{naumov}
		For any $|\tau|<1$, $0<\gamma<1$, and $u\in \R$, as well as any choice of $u_N\to u$, there is $C,c>0$ such that
		\[\P \left(s_{N-i}(A_{N}-u_NI)>cN^{\gamma-1} \text{ for all }N^{\gamma}\le i<N\right)\le Ce^{-cN^\gamma}.\]
	\end{lemma}
	\appendix
	
	\section{Computation of Covariances and Conditional Laws \label{appendix:covariance-computation}}
	
	The primary purpose of this appendix will be to study the covariance structure of the centered Gaussian vector \[(\zeta(\n),\b{G}_E(\n),\D_E \b{G}(\n),\zeta(\n(r)),\b{G}_E(\n(r)),\D_E \b{G}(\n(r))).\label{eqn:appendix-main-vector}\]
	The main result is given by Lemma \ref{lem:main-covariance-appendix}. The remainder of this section will then consist of computing various conditional expectations and densities involving (\ref{eqn:appendix-main-vector}), as well as establishing some basic results about them.
	
	We define functions 
	\[\Phi_1(x)=px^{p-1},\;\;\;\Phi_2(x)=\tau p(p-1) x^{p-2},\]
	so that
	\[\E[g_i(x)g_j(y)]=\delta_{ij}\Phi_1((x,y))+x_jy_i\Phi_2((x,y)).\label{eqn:appendix-general-model}\]
	The proof we give for Lemma \ref{lem:main-covariance-appendix} will actually hold for any centered Gaussian random field with covariance given by (\ref{eqn:appendix-general-model}), and with $\b{G}$ and $\zeta$ defined identically. In particular, it holds for the more general models considered in \cite{complexity-fyodorov-non-gradient,complexity-xavier}. We believe in addition that this generality may assist in the readability and verification of Lemma \ref{lem:main-covariance-appendix}. 
	
	We further define the function $\Phi_3(r)=\Phi_1(r)+r\Phi_2(r)$. With $\delta$ denoting the standard Kronecker $\delta$, we will further denote $\delta_{ijk}=\delta_{ij}\delta_{jk}$, $\delta_{ijkl}=\delta_{ijk}\delta_{kl}$, and $\delta_{ij\neq N-1}=\delta_{ij}(1-\delta_{i(N-1)})$. We will also use the short-hand $x_*=\sq{1-x^2}$. Finally, we fix and omit the choice of $E$ given by the following lemma from the notation.
	
	\begin{lemma}
		\label{lem:main-covariance-appendix}
		For any $r\in [-1,1]$, there exists a choice of orthonormal frame field $E=(E_i)_i$, such that for $1\le i,j,\alpha,\beta\le N-1$
		\[\E[G_\alpha(\n)G_\beta(\n(r))]=\delta_{\alpha\beta\neq N-1}\Phi_{1}(r)+\delta_{\alpha \beta (N-1)}(r\Phi_1(r)-r_*^2\Phi_2(r))\]
		\[\E[G_{\alpha}(\n)\zeta(\n(r))]=-\E[\zeta(\n)G_{\alpha}(\n(r))]=\delta_{\alpha(N-1)}r_*\Phi_3(r)\]\[\E[\zeta(\n)\zeta(\n(r))]=r\Phi_3(r)\]
		\[\E[E_iG_\alpha(\n)G_{\beta}(\n(r))]=-\E[G_\beta(\n)E_iG_\alpha(\n(r))]=\]\[r_*[\delta_{\alpha\beta\neq N-1}\delta_{i(N-1)}\Phi_1'(r)+\delta_{\beta i\neq N-1}\delta_{\alpha(N-1)}\Phi_2(r)+\delta_{\beta(N-1)}\delta_{\alpha i}\Phi_3(r)\]
		\[\delta_{\alpha\beta i(N-1)}(r\Phi_3'(r)-\Phi_2'(r))]\]
		\[\E[E_iG_\alpha(\n)\zeta(\n(r))]=\E[\zeta(\n)E_iG_\alpha(\n(r))]=\delta_{\alpha i(N-1)}r_*^2\Phi_3'(r)-\delta_{\alpha i}r\Phi_3(r)\]
		\[\E[E_iG_{\alpha}(\n)E_jG_\beta(\n(r))]=r_*^4\delta_{ij\alpha\beta (N-1)}\Phi_2''(r)\]
		\[-\Phi'_2(r)r_*^2(\delta_{ij\alpha\beta(N-1)}5r+\delta_{i\alpha\neq (N-1)}\delta_{\beta j(N-1)}r+\delta_{i\alpha (N-1)}\delta_{\beta j \neq (N-1)}r\]
		\[+\delta_{\alpha j (N-1)}\delta_{i\beta \neq (N-1)}+\delta_{\alpha j \neq (N-1)}\delta_{i\beta (N-1)}+\delta_{\alpha \beta (N-1)}\delta_{ij\neq (N-1)})\]
		\[-r_*^2\Phi_1''(r)(\delta_{ij\alpha\beta (N-1)}r+\delta_{\alpha\beta \neq (N-1)}\delta_{ij (N-1)})\]
		\[-(\delta_{j\beta(N-1)}\delta_{\alpha i}+\delta_{j\beta}\delta_{\alpha i(N-1)})r_*^2(\Phi_2(r)+\Phi_1'(r))\]
		\[+\Phi_1'(r)(\delta_{ij\neq N-1}+r\delta_{ij(N-1)})(\delta_{\alpha \beta\neq N-1}+r\delta_{\alpha \beta(N-1)})\]
		\[+(\delta_{i\beta\neq N-1}+r\delta_{i\beta(N-1)})(\delta_{\alpha j\neq N-1}+r\delta_{\alpha j(N-1)})\Phi_2(r)+r\Phi_3(r)\delta_{i\alpha}\delta_{j\beta}.\]
	\end{lemma}
	
	Before proceeding to the proof we observe that in the purely relaxational case  (i.e. $\Phi_1'=\Phi_2$) we recover Lemma 54 of \cite{complexity-second-moment-general} with $\Phi_1=\nu'$ and $q_1=q_2=1$ (up to constant differences resulting from their normalization $\nu(1)=1$), and in the case of $r=1$ we recover the covariance structure described on pg. 13 of \cite{complexity-xavier}.
	\begin{proof}
		We will take the orthonormal frame field constructed in the proof of Lemma 30 of \cite{pspin-second}, though we will need to recall its construction to establish the precise properties of it that we will use.
		
		We let $P_N:S^{N-1}\to \R^{N-1}$ denote projection given by omitting the last coordinate. We let $R_{r,N}:S^{N-1}\to S^{N-1}$ denote the rotation mapping
		\[R_{r,N}(x_1,\dots x_N)=(x_1,\dots x_{N-2},rx_{N-1}-r_*x_N,r_*x_{N-1}+rx_N).\]
		We observe that $P_N(\n)=P_N\circ R_{r,N}(\n(r))=0$. Then taking small enough neighborhoods of $\n$ and $\n(r)$ in $S^{N-1}$ that the restrictions of $P_N$ and $P_N\circ R_{r,N}$ are diffeomorphisms onto their image, we define $\hat{g}_{i}^1=g_{i}\circ P_N^{-1}$ and $\hat{g}_{i}^r=g_{i}\circ (P_N\circ R_{r,N})^{-1}$, and similarly for $\hat{\zeta}^1$ and $\hat{\zeta}^r.$
		
		We now claim that there is an orthonormal frame field $E=(E_i)_{i=1}^{N-1}$ such that for any $1\le i,j\le N-1$,
		\[(E_i\zeta(\n),E_iG_j(\n))=(\partial_i \hat{\zeta}^1(0),\partial_{i}\hat{g}_{j}^1(0)-\delta_{ij}\hat{g}^1_N(0))\label{eqn:local-frame-1}\]
		and such that for $1\le j<N-1$ and $1\le i\le N-1$,
		\[(E_i\zeta(\n(r)),E_iG_j(\n(r)))=(\partial_i \hat{\zeta}^r(0),(\partial_{i}\hat{g}_{j}^r)(0)-\delta_{ij}(r_*\hat{g}_{N-1}^r(0)+r\hat{g}_N^r(0))),\label{eqn:local-frame-2}\]
		\[E_{i}G_{N-1}(\n(r))=r\partial_i\hat{g}_{N-1}^r(0)-r_*\partial_i \hat{g}_N^r(0)-\delta_{i(N-1)}(r_*\hat{g}_{N-1}^r(0)+r\hat{g}_N^r(0)),\label{eqn:local-frame-3}\]
		where in all the expressions above, $\partial_i$ denotes the $i$-th standard partial derivative on $\R^{N-1}$.
		
		As stated above, we may infact take the orthonormal frame field constructed in the proof of Lemma 30 in \cite{pspin-second}, or more precisely, in footnote 5 on their pg. 36. For clarity, we now recall it's construction. Let $(\frac{\partial}{\partial x_i})_{i=1}^{N-1}$ denote the pullback of the standard frame field $(\frac{d}{dx_i})_{i=1}^{N-1}$ on $\R^{N-1}$ by $P_N\circ R_{r,N}$. If we now denote the standard frame field on $\R^N$ as $(\frac{\partial^{euc}}{\partial^{euc} x_i})_{i=1}^{N}$, we see that with respect to the inclusion $T_{\n(r)}S^{N-1}\subseteq T_{\n(r)}\R^N$ induced by the standard embedding $S^{N-1}\subseteq \R^N$, we have that for $1\le i<N-1$,
		\[\frac{\partial}{\partial x_i}(\n(r))=\frac{\partial^{euc}}{\partial^{euc} x_i},\;\;\;\;\; \frac{\partial}{\partial x_{N-1}}(\n(r))=r\frac{\partial^{euc}}{\partial^{euc} x_{N-1}}-r_*\frac{\partial^{euc}}{\partial^{euc} x_N},\]
		In particular, we observe that $(\frac{\partial}{\partial x_i}(\n(r)))_{i=1}^{N-1}$ is an orthonormal basis of $T_{\n(r)}S^{N-1}$. Thus we may form an orthonormal frame field on a neighborhood of $\n(r)$ by taking its value at any point $x$ to be given by the parellel transport of $(\partial_i)_{i=1}^{N-1}$ along the unique geodesic of $S^{N-1}$ connecting $\n(r)$ to $x$. If $r=1$, we extend this orthonormal frame field arbitrarily to the remainder of the sphere to obtain $E$. Otherwise, perform the same construction with $r=1$ to obtain a field around $\n$, and potentially shrinking our neighborhood, we may extend these to obtain our desired $E$. We must now verify the claims that $E$ satisfies (\ref{eqn:local-frame-1}-\ref{eqn:local-frame-3}). We observe that by our construction (\ref{eqn:local-frame-1}) is a special case of (\ref{eqn:local-frame-2}-\ref{eqn:local-frame-3}), so we are reduced to studying our frame field around $\n(r)$.
		
		For this, we note that by routine computation, and $x\in \R^{N-1}$ sufficiently small we have that \[\frac{\partial}{\partial x_i} ((P_N\circ R_{r,N})^{-1}(x))=\partial_i^{euc}-x_i( r_*\partial_{N-1}^{euc}+r\partial_N^{euc})+O(\|x\|^2),\label{eqn:ignore-temp-frame-1}\]
		\[\frac{\partial}{\partial x_{N-1}}((P_N\circ R_{-\theta})^{-1}(x))=r\partial_{N-1}^{euc}-r_*\partial_N^{euc}-x_{N-1}( r_*\partial_{N-1}^{euc}+r\partial_N^{euc})+O(\|x\|^2),\label{eqn:ignore-temp-frame-2}\]
		where we have abbreviated $\partial^{euc}_i=\frac{\partial^{euc}}{\partial^{euc} x_i}$. Now working in the coordinate system provided by $P_N\circ R_{r,N}$, we may write $E_i=\sum_{j=1}^{N-1}a_{ij}(x)\frac{\partial}{\partial x_i}$ for some smooth functions $a_{ij}$ defined on this neighborhood in $S^{N-1}$. It is shown in \cite{pspin-second} that for any $1\le i,j,k\le N-1$, $\frac{\partial}{\partial x_k}a_{ij}(\n(r))=0$, by using that the relevant Christoffel symbols vanish at $\n(r)$, and we note that by construction $a_{ij}(\n(r))=\delta_{ij}$. In particular, $E_i$ coincides with $\frac{\partial}{\partial x_i}$ to first order in $x$, so that (\ref{eqn:ignore-temp-frame-1}) and (\ref{eqn:ignore-temp-frame-2}) hold with $\frac{\partial}{\partial x_i}$ replaced by $E_i$. Additionally, we see that
		\[E_i(G(x),E_j(x))|_{x=\n(r)}=\sum_{k=1}^{N-1}\frac{\partial}{\partial x_i}(G(x),a_{jk}(\n(r))\frac{\partial}{\partial x_k})\bigg|_{x=\n(r)}=(E_iG(x),E_j(x))|_{x=\n(r)}.\]
		Combining these results we obtain (\ref{eqn:local-frame-2}) and (\ref{eqn:local-frame-3}).
		
		Now we observe that denoting
		\[\rho_N(x,y)=\sum_{i=1}^{N-2}x_iy_i+rx_{N-1}y_{N-1}+r_*x_{N-1}\|y\|_*-r_*y_{N-1}\|x\|_*+r\|x\|_*\|y\|_*,\]
		that we have
		\[\E[\hat{g}_{i}^1(x)\hat{g}_{j}^r(y)]=\delta_{ij}\Phi_1(\rho_N(x,y))+(P_N^{-1}(x))_j((P_N\circ R_{r,N})^{-1}(y))_i\Phi_2(\rho_N(x,y)),\]
		for $x,y\in \R^{N-1}$ such that the functions appearing on the left-hand side are well defined. We observe similarly that for $1\le i\le N$ and $x,y\in \R^{N-1}$ around zero,
		\[P_N^{-1}(x)_i=x_i\delta_{i\neq N}+\delta_{iN}+O(\|x\|^2),\]
		\[(P_N\circ R_{r,N})^{-1}(y)_i=y_i(\delta_{i\neq (N-1)}+r\delta_{i(N-1)})+\delta_{i(N-1)}r_*+\delta_{iN}(r-r_*y_{N-1})+O(\|y\|^2),\]
		\[\rho_N(x,y)=\sum_{i=1}^{N-2}x_iy_i+rx_{N-1}y_{N-1}+r_*x_{N-1}-r_*y_{N-1}+r+O(\|x\|^2+\|y\|^2).\]
		
		We now simply recall eqn. 5.5.4 of \cite{AT}, which states that for an arbitrary centered Gaussian field, $h$, defined on an open subset of $\R^{m}$, and with smooth covariance function $C$, we have that
		\[\E[\frac{d^k}{dx_{i_1}\dots dx_{i_k}}h(x)\frac{d^l}{dy_{j_1}\dots dy_{j_l}}h(y)]=\frac{d^k}{dx_{i_1}\dots dx_{i_k}}\frac{d^l}{dy_{j_1}\dots dy_{j_l}}C(x,y). \label{covariance of derivative formula}\]
		Using this formula, (\ref{eqn:local-frame-1}-\ref{eqn:local-frame-3}), and the above expressions for $x,y\sim 0$, the lemma now follows from a long but routine computation.
	\end{proof}
	
	From this result, we observe the following corollary, which allows us to partition the components of (\ref{eqn:appendix-main-vector}) into independent blocks.
	\begin{corollary}
		\label{corr:appendix:independence}
		Let us take $r\in (-1,1)$, and the choice of orthonormal frame field $(E_i)_{i=1}^{N-1}$ given in Lemma \ref{lem:main-covariance-appendix}. Then all of the following random variables are independent
		\begin{enumerate}
			\item $(E_{i}G_j(\n),E_{j}G_i(\n),E_{i}G_j(\n(r)),E_{j}G_i(\n(r))),$ for $1\le i<j<N-1$,
			\item $(E_iG_i(\n)+\zeta(\n),E_iG_i(\n(r))+\zeta(\n(r))),$ for $1\le i<N-1$,
			\item $(G_i(\n),E_{N-1}G_i(\n),E_{i}G_{N-1}(\n),G_i(\n(r)),E_{N-1}G_i(\n(r)),E_{i}G_{N-1}(\n(r)))$ for $1\le i<N-1,$
			\item $(\zeta(\n),\zeta(\n(r)),G_{N-1}(\n),G_{N-1}(\n(r)),E_{N-1}G_{N-1}(\n),E_{N-1}G_{N-1}(\n(r))).$
		\end{enumerate}
	\end{corollary}
	We now go back to our original setting of (\ref{eqn:model-def}). That is, we take
	\[\Phi_1(x)=px^{p-1},\;\;\Phi_2(x)=\tau p(p-1) x^{p-2}, \;\;\Phi_3(x)=p\alpha x^{p-1}.\]
	We now prepare to derive some statements involving the conditional densities. To establish non-degeneracy in some cases, we will often use an observation from (\ref{eqn:model-def-explicit}), which shows that we may write
	\[\b{f}(x)=c_s\b{f}^{s}(x)+c_a\b{f}^a(x),\label{eqn:sym-antonym-decomp}\]
	where $c_s,c_a$ are constants, and $\b{f}^s$ and $\b{f}^a$ are independent Gaussian vector-valued functions which satisfy (\ref{eqn:model-def}) with parameter $\tau=1$ and $\tau=-(p-1)^{-1}$ respectively. Abstractly, such a decomposition is clear by noting that covariance function in (\ref{eqn:model-def}) is linear in $\tau$, but more explicitly we may write
	\[f^{s}_i(x)=\D^{euc}_iH(x),\;\;\; f^{a}_i(x)=\frac{1}{N}\sum_{k=1}^{N}x_k A_{ik}(x),\label{eqn:sym-and-antisymdef}\]
	with notation as in (\ref{eqn:model-def-explicit}). We collect the facts we need about this decomposition in the following lemma, which also verifies the equivalence of (\ref{eqn:model-def}) and (\ref{eqn:model-def-explicit}), as claimed in the introduction.
	
	\begin{lemma}
		\label{lem:model-equivalence}
		For $1\le i,j\le N$ and $x,y\in \SN$
		\[\E[f^a_i(x)f^a_j(y)]=\delta_{ij}(x,y)_N^{p-1}-\left(\frac{x_jy_i}{N}\right)(x,y)_N^{p-2},\label{eqn:ignore-1010}\]
		\[\E[f^s_i(x)f^s_j(y)]=p\delta_{ij}(x,y)_N^{p-1}+p(p-1)\left(\frac{x_jy_i}{N}\right)(x,y)_N^{p-2}.\label{eqn:ignore-1011}\]
		In particular, the random vector-valued function defined by (\ref{eqn:model-def-explicit}) satisfies (\ref{eqn:model-def}). Moreover, we have that $\lam(x)=\frac{p}{N}H(x)$, so in particular $\b{f}^a$ is independent of $\lam$.
	\end{lemma}
	\begin{proof}
		We recall that for a spherical $\ell$-spin model, $\tilde{H}$, we have that for $x,y\in \SN$
		\[\E[\tilde{H}(x)\tilde{H}(y)]=N(x,y)_N^\ell.\]
		In particular, by (\ref{covariance of derivative formula}) above, we that
		\[\E[\D_i^{euc}H(x)\D_j^{euc}H(y)]=\frac{\partial^2}{\partial x_i\partial y_j}N(x,y)_N^p\]
		which immediately gives (\ref{eqn:ignore-1010}). Similarly, we see that may compute that
		\[\E\left[\left(\frac{1}{N}\sum_{k=1}^{N}x_k A_{ik}(x)\right)\left(\frac{1}{N}\sum_{l=1}^{N}y_l A_{jl}(y)\right)\right]=\frac{1}{N}\sum_{k,l=1}^{N}x_ky_l(\delta_{ij}\delta_{kl}-\delta_{il}\delta_{jk})(x,y)_N^{p-2},\]
		which immediately gives (\ref{eqn:ignore-1011}). For the final claim we compute
		\[(x,\b{f}^a(x))_N=\frac{1}{N^2}\sum_{i,j=1}^{N}x_ix_jA_{ij}(x)=0,\]
		\[(x,\b{f}^s(x))_N=\frac{1}{N}\sum_{i=1}^{N}x_i\D_i^{euc}H(x)=pH(x),\]
		where in the first line we have used that $A_{ij}$ and in the second we have used that $H$ is a homogeneous polynomial of degree-$p$.
	\end{proof}
	
	We now proceed to the proof of Lemma \ref{lem:conditioning-density-statement}. We will repeatedly use the formulas for the conditional law of Gaussian distributions (see for example, (1.2.7) and (1.2.8) of \cite{AT}), which reduces us to routine calculation using the covariance computations in Lemma \ref{lem:main-covariance-appendix}.
	
	\begin{proof}[Proof of Lemma \ref{lem:conditioning-density-statement}] 
		We first observe that for $1\le i<N-1$:
		\[\E[(E_{N-1}G_{i}(\n(r)),E_iG_{N-1}(\n(r)),G_{i}(\n(r)))(E_{N-1}G_{i}(\n),E_iG_{N-1}(\n),G_{i}(\n))^t]=\]
		\[p(p-1)\begin{bmatrix}r^{p-3}(r^2-(p-1)r_*^2)&\tau r^{p-3}(r^2-(p-1)r_*^2)&-r^{p-2}r_*\\ 
			\tau r^{p-3}(r^2-(p-1)r_*^2)&r^{p-3}(r^2-\tau (p-1)r_*^2)&-\tau r^{p-2}r_*\\
			r^{p-2}r_*&\tau r^{p-2}r_* &(p-1)^{-1}r^{p-1}\end{bmatrix},\label{eqn:covariance-edge-0}\]
		\[\E[(G_i(\n),G_i(\n(r)))(G_i(\n),G_i(\n(r)))^t]=p\begin{bmatrix}1&r^{p-1}\\ r^{p-1}&1 \end{bmatrix}.\label{eqn:covariance-edge-1}\]
		In addition, we record the covariance matrix of the remaining special entries:
		\[\E[(\zeta(\n(r)),G_{N-1}(\n(r)),E_{N-1}G_{N-1}(\n(r)))(\zeta(\n),G_{N-1}(\n),E_{N-1}G_{N-1}(\n))^t]=\]
		\[\begin{bmatrix}
			pr^{p}\alpha&pr_*r^{p-1}\alpha&-pr^{p-2}(1-pr_*^2)\alpha\\
			-pr_*r^{p-1}\alpha&pr^{p-2}(r^2-r_*^2(p-1)\tau)&a_1(r)\\
			-pr^{p-2}(1-pr_*^2)\alpha& -a_1(r)&a_2(r)
		\end{bmatrix}\label{eqn:covariance-corner-1}\]
		where here
		\[a_1(r):=r_*(r\Phi_3'(r)+\Phi_3(r)-\Phi_2'(r))=pr_*r^{p-3}(p\alpha r^2-(p-1)(p-2)\tau ),\]
		\[a_2(r):=r_*^2[r_*^2\Phi_2''(r)-5r\Phi_2'(r)-r\Phi_1''(r)-2(\Phi_2(r)+\Phi_1'(r))]+(\Phi'_1(r)+\Phi_2(r))r^2+r\Phi_3(r)=\]
		\[r^{p-4}r_*^4p(p-1)(p-2)(p-3)\tau-r^{p-2}r_*^2p(p-1)(p+5p\tau -8\tau)+r^pp(p+2(p-1)\tau).\]
		From Corollary \ref{corr:appendix:independence} we have that for $1\le i\le N-1$, the random vectors $(G_i(\n),G_i(\n(r)))$ are pairwise independent centered Gaussian vectors, with covariance matrix given by the right-hand side of (\ref{eqn:covariance-edge-1}) for $1\le i<N-1$, and for $i=N-1$ (employing (\ref{eqn:covariance-corner-1})) by 
		\[\begin{bmatrix}p&pr^p-\tau p(p-1)r^{p-2}(1-r^2)\\
			pr^p-\tau p(p-1)r^{p-2}(1-r^2)&p\end{bmatrix}.\label{eqn:appendix:ignore-second-covariance}\]
		Thus we see that to verify the claims of Lemma \ref{lem:conditioning-density-statement} involving $(G_{E}(\n),G_{E}(\n(r)))$ it suffices to show that the matrices of (\ref{eqn:covariance-edge-1}) and (\ref{eqn:appendix:ignore-second-covariance}) are invertible when $r\in (-1,1)$. The first is obvious, so we focus on the second. We see that it suffices to verify that with
		\[\hat{g}_\tau(r):=r^p-\tau (p-1)r^{p-2}(1-r^2)\]
		we have that $|\hat{g}_\tau(r)|<1$ for $|r|<1$. As $\hat{g}_\tau(r)$ is linear in $\tau$, it suffices to show that $|\hat{g}_{-(p-1)^{-1}}(r)|<1$ and $|\hat{g}_{1}(r)|\le 1$. Explicitly we have that $\hat{g}_{-(p-1)^{-1}}(r)=r^{p-2}$, and that $\hat{g}_{1}(r)=pr^p-(p-1)r^{p-2}$. The inequality for the first case is obvious and for the second we note that for $r\ge 0$,
		\[r^{p}\le p r^{p}-(p-1)r^{p} \hat{g}_{1}(r)\le p r^{p-2}-(p-1)r^{p-2}=r^{p-2},\]
		and further noting that $\hat{g}_1(-r)=(-1)^p\hat{g}_1(r)$ established the case. In particular, this confirms that the matrix (\ref{eqn:appendix:ignore-second-covariance}) is invertible for all $r\in (-1,1)$, and completes the first claims of Lemma \ref{lem:conditioning-density-statement}.
		
		To demonstrate the remaining claims in Lemma \ref{lem:conditioning-density-statement}, we need to investigate the conditional distribution of $(\zeta(\n),\zeta(\n(r)))$. It is clear that this is a centered Gaussian vector, so we only need to verify that it is non-degenerate and compute its covariance.
		
		To begin we will verify non-degeneracy. To do so let us recall the decomposition $\b{g}=c_s \b{g}^s+c_a \b{g}^a$, from Lemma \ref{lem:model-equivalence}. The field $\b{g}^a$ is independent of $\zeta$ and $c_a\neq 0$. We have just shown that $(\b{G}^a(\n),\b{G}^a(\n(r)))$ is non-degenerate, and independent of $\zeta$, so we see that the conditional law of $(\zeta(\n),\zeta(\n(r)))$ is degenerate if and only if the unconditional law of $(\zeta(\n),\zeta(\n(r)))$ is non-degenerate, which follows easily from (\ref{eqn:covariance-corner-1}).
		
		We now set the notation
		\[\E_{\D,r}[-]=\E_{\D,r}[-|\b{G}(\n)=\b{G}(\n(r))=0],\]
		which is valid for $r\in (-1,1)$ by the non-degeneracy established above. With this notation we define
		\[\Sigma_U(r):=\E_{\D,r}[(\zeta(\n),\zeta(\n(r)))(\zeta(\n),\zeta(\n(r)))^t].\]
		The remainder of the proof will be spent computing an expression for $\Sigma_U(r)$, or more precisely $\Sigma_U(r)^{-1}$, which exists by the non-degeneracy shown above.
		
		To do this we denote the covariance matrix of the random vector \[(\frac{1}{\sq{p\alpha}}\zeta(\n),\frac{1}{\sq{p}}G_{N-1}(\n),\frac{1}{\sq{p\alpha}}\zeta(\n(r)),\frac{1}{\sq{p}}G_{N-1}(\n(r))),\label{eqn:ignore-vector}\] as $\bar{\Sigma}_U(r)$. This choice of normalization is chosen so that all entries have unit variance. We note that $p\alpha\Sigma_U(r)^{-1}$ coincides with the $2$-by-$2$ submatrix composed of the first and third rows and columns of $\bar{\Sigma}_U(r)^{-1}$, so we will begin by computing this inverse.
		
		If we denote
		\[c_1(r)=r^p,\;\; c_2(r)=r_*r^{p-1}\sq{\alpha},\;\; c_3(r)=r^{p-2}(r^2-(p-1)\tau(1-r^2)),\]
		then by (\ref{eqn:covariance-corner-1}) we have that
		\[C(r):=
		\begin{bmatrix}
			c_1(r)&c_2(r)\\
			-c_2(r)&c_3(r)
		\end{bmatrix}
		,\;\;\bar{\Sigma}_U(r)=\begin{bmatrix}
			I & C(r)\\
			C(r)^t & I\\
		\end{bmatrix}.\label{eqn:ignore-vic}\]
		We will compute the block of inverse of this matrix by employing the Schur complement formula, though to do this we will perform some preliminary computations. We first compute that
		\[I-C(r)^tC(r)=\begin{bmatrix}f_1(r)& f_2(r)\\ f_2(r) & f_3(r)\end{bmatrix},\]
		where here
		\[f_1(r):=1-r^{2p}-\alpha r^{2p-2}(1-r^2),\;\; f_2(r):=-(p-1)\tau \sq{\alpha} r^{2p-3}(1-r^2)^{3/2},\]
		\[f_3(r):=1-r^{2p-2}(1-r^2)\alpha-r^{2p-2}(r^2-(1-r^2)(p-1)\tau)^2.\]
		We may then introduce functions
		\[b(r)=f_1(r)f_3(r)-f_2(r)^2,\;\; k_1(r)=f_3(r),\;\; k_2(r)=c_2(r)f_2(r)-c_1(r)f_3(r),\]
		so that by the Schur complement formula
		\[\Sigma_U(r)^{-1}=\frac{1}{p\alpha b(r)}\begin{bmatrix}
			k_1(r)& k_2(r)\\ 
			k_2(r)& k_1(r)
		\end{bmatrix}.\label{eqn:def:U-external field form}\]
		Finally by direct computation we see that
		\[b(r)=(1-r^{2p-2})^2-(p-1)^2\tau^2r^{2(p-2)}(1-r^2)^2,\]
		\[k_1(r)=b(r)+r^{2p-2}(1-r^{2p-2}+\tau(p-1)(1-r^2)),\]
		\[k_2(r)=-r^{p}(1-r^{2p-2})-\tau(p-1) r^{3p-4}(1-r^2).\]
	\end{proof}
	
	We now define the remaining conditional density matrices we need. As we only need to understand their behavior as $r\to 1$ and $r\to 0$, we will avoid giving them as explicitly as we gave our expressions for $\Sigma_U$, writing them instead as a product of certain fixed matrices. 
	
	To begin, we denote $\n^1(r):=\n$ and $\n^2(r):=\n(r)$. We define for $k,l=1,2$ and $1\le i<N-1$
	\[\Sigma_{Z}^{k,l}(r):=\frac{1}{p(p-1)}\E_{\D,r}[(E_{i}G_{N-1}(\n^k(r)),E_{N-1}G_i(\n^k(r)))\times\]
	\[(E_{i}G_{N-1}(\n^l(r)),E_{N-1}G_i(\n^l(r)))^t].\label{eqn:ignore-plane}\]
	We note that this quantity is independent of $i$, and that the conditional covariance matrix of the vector \[\frac{1}{\sq{p(p-1)}}(E_{i}G_{N-1}(\n),E_{N-1}G_i(\n),E_{i}G_{N-1}(\n(r)),E_{N-1}G_i(\n(r)))\]
	on the event $(\b{G}(\n)=\b{G}(\n(r))=0)$ is given by the matrix
	\[\Sigma_Z(r):=\begin{bmatrix}\Sigma_{Z}^{1,1}(r)& \Sigma_{Z}^{1,2}(r)\\ \Sigma_{Z}^{2,1}(r) & \Sigma_{Z}^{2,2}(r)\end{bmatrix}.\label{eqn:def:SigmaZ}\]
	We note that we also have that $\Sigma_Z^{1,1}(r)=\Sigma_Z^{2,2}(r)$ and $\Sigma_Z^{1,2}(r)^t=\Sigma_Z^{2,1}(r)$. By Corollary \ref{corr:appendix:independence}, this is also the covariance matrix after conditioning, as these random variables are independent of $(\zeta(\n),\zeta(\n(r)))$.
	
	To give a more explicit representation, let us denote the unconditioned covariance matrix of
	\[\frac{1}{\sq{p(p-1)}}(E_{i}G_{N-1}(\n),E_{N-1}G_i(\n),E_{i}G_{N-1}(\n(r)),E_{N-1}G_i(\n(r)))\]
	as $\Sigma_A(r)$ and further define
	\[\Sigma_B(r):=\frac{1}{p(p-1)}\E[(E_{i}G_{N-1}(\n),E_{N-1}G_i(\n),E_{i}G_{N-1}(\n(r)),E_{N-1}G_i(\n(r)))\]\[(G_i(\n),G_i(\n(r)))^t],\]
	then by Corollary \ref{corr:appendix:independence}, (\ref{eqn:covariance-edge-1}), and the Schur complement formula we have that
	\[\Sigma_Z(r)=\Sigma_A(r)-\Sigma_B(r)\frac{1}{p(1-r^{2(p-1)})}\begin{bmatrix}1&-r^{p-1}\\-r^{p-1} &1 \end{bmatrix}\Sigma_B(r)^t.\]
	
	Similarly if we denote the (unconditional) covariance matrix of \[(\zeta(\n),\zeta(\n(r)),G_{N-1}(\n),G_{N-1}(\n(r)))\] as $\Sigma_C(r)$ and
	\[\Sigma_D(r):=\frac{1}{p(p-1)}\E[(E_{N-1}G_{N-1}(\n)-\zeta(\n),E_{N-1}G_{N-1}(\n(r))-\zeta(\n(r)))\times\]\[(\zeta(\n),\zeta(\n(r)),G_{N-1}(\n),G_{N-1}(\n(r)))^t],\]
	we may express the covariance matrix of \[(E_{N-1}G_{N-1}(\n)-\zeta(\n),E_{N-1}G_{N-1}(\n(r)))-\zeta(\n(r))\]
	conditioned on the event
	\[(\b{G}(\n)=\b{G}(\n(r))=0,\zeta(\n)=u_1,\zeta(\n(r))=u_2)\]
	as
	\[\Sigma_S(r)=\frac{1}{p(p-1)}\begin{bmatrix}p(p-1)(1+\tau)& a_2(r)\\ a_2(r)& p(p-1)(1+\tau) \end{bmatrix}-\Sigma_D(r)\Sigma_C(r)^{-1}\Sigma_D(r)^t,\label{eqn:S-def}\]
	and the mean as
	\[m^k_\ell(r)=\Sigma_D(r)\Sigma_C(r)^{-1}e_{\ell}.\label{eqn:m-def}\]
	
	We are now ready to give the proofs of Lemmas \ref{lem:jacobian-covariance-statement-no-energy} and \ref{lem:jacobian-covariance-statement-with-energy}.
	
	\begin{proof}[Proof of Lemmas \ref{lem:jacobian-covariance-statement-no-energy} and \ref{lem:jacobian-covariance-statement-with-energy}]
		We begin with the proof of Lemma \ref{lem:jacobian-covariance-statement-no-energy}. All of the independence statements follow from Corollary \ref{corr:appendix:independence}. The law of the elements of $(W^k_N(r),V^k_N(r))$ follows essentially by definition, and as the distribution of $(G^k_N(r))_{k=1,2}$ is unaffected by the conditioning, its law follows easily from Lemma \ref{lem:main-covariance-appendix}. This completes the proof of Lemma \ref{lem:jacobian-covariance-statement-no-energy}. 
		
		The proof of Lemma \ref{lem:jacobian-covariance-statement-with-energy} follows from Lemma \ref{lem:jacobian-covariance-statement-no-energy} using Corollary \ref{corr:appendix:independence}.
	\end{proof}

	Next we will give the proof of Lemma \ref{lem:vanishing rate of E}.
	
	\begin{proof}[Proof of Lemma \ref{lem:vanishing rate of E}]
		By employing (\ref{eqn:covariance-edge-0}) we see that $\Sigma_B(r)=O(r^{p-2})$ entrywise, so that
		\[\Sigma_Z(r)=\Sigma_A(r)+O(r^{2(p-2)}).\]
		Employing (\ref{eqn:covariance-edge-1}) we see that $\Sigma_A(r)=I+O(r^{p-3})$, so that in total we have that $\Sigma_Z(r)=I+O(r^{p-3})$. In view of the expressions for $Z_{N-1,i}^k(r)$ and $Z_{i,N-1}^k(r)$, this establishes the first two claims in (\ref{eqn:ignore-druid-1}).
		
		Employing (\ref{eqn:covariance-corner-1}) we see that $\Sigma_C(r)=\mathrm{diag}(p\alpha,p\alpha,p,p)+O(r^{p-2})$, where $\mathrm{diag}(x_1,\dots, x_n)$ denotes the diagonal matrix with entries $x_1,\dots x_n$. Noting that $a_1(r)=O(r^{p-3})$, we see that $\Sigma_{D}(r)=O(r^{p-3})$, so that $\Sigma_D(r)^t\Sigma_C(r)^{-1}\Sigma_D(r)=O(r^{2(p-3)})$. Now finally $a_2(r)=O(r^{p-4})$ we see that
		\[\Sigma_S(r)=(1+\tau)I+O(r^{p-4}),\]
		which shows the first claim of (\ref{eqn:ignore-druid-2}), with the second following similarly.
	\end{proof}
	
	We will now finally prove a vanishing result for these functions as $r\to \pm 1$. For this, let us denote for $1\le i\le N-2$
	\[\sigma_1(r)=\E_{\D,r}[E_{N-1}G_i(\n)^2]=p(p-1)[\Sigma_Z(r)]_{22},\label{eqn:def:sigma1}\]
	\[\sigma_2(r)=\E_{\D,r}[E_{N-1}G_{N-1}(\n)^2]=p(p-1)[\Sigma_S(r)]_{11}.\label{eqn:def:sigma2}\]
	\begin{lemma}
		\label{lem:appendix:covariance around 1}
		There is $C>0$ such that for $i=1,2$ and $r\in (-1,1)$
		\[\sigma_i(r)\le C(1-r^2).\]
	\end{lemma}
	\begin{proof}
		We note that $\sigma_i(r)\ge 0$ for $r\in (-1,1)$. As moreover, each $\sigma_i(r)$ is a rational function in $r$, we see that it suffices to show that for $i=1,2$
		\[\lim_{r\to \pm 1}\sigma_i(r)=0.\]
		Moreover, as both expressions are even in $r$ it suffices to only consider the case $r\to 1$. 
		
		For this we note that employing (\ref{eqn:covariance-edge-0}) the covariance matrix of the vector \[(E_{N-1}G_i(\n),G_i(\n(r)),G_i(\n))\] 
		is given by
		\[\begin{bmatrix}
			p(p-1)& p(p-1)r^{p-2}(1-r^2)^{1/2} & 0\\
			p(p-1)r^{p-2}(1-r^2)^{1/2}& p& pr^{p-1}\\
			0 & pr^{p-1} & p
		\end{bmatrix}.\]
		By this and Corollary \ref{corr:appendix:independence}, we have that
		\[\sigma_1(r)=p(p-1)-p^2(p-1)^2r^{2(p-2)}(1-r^2)\begin{bmatrix}
			p & pr^{p-1}\\
			pr^{p-1} & p
		\end{bmatrix}^{-1}_{22}=\]
		\[p(p-1)\left[1-\frac{(p-1)r^{2(p-2)}(1-r^2)}{(1-r^{2p-2})}\right].\label{eqn:def:sigma2-2}\]
		Direct computation with the above expressions yield that
		\[\lim_{r\to 1}\sigma_1(r)=p(p-1)\left[1-(p-1)\lim_{r\to 1}\left(\frac{1-r^2}{1-r^{2p-2}}\right)\right]=0.\]
		
		The computation of $\sigma_2(r)$ proceeds similarly. Using (\ref{eqn:covariance-corner-1}) we see that
		\[\sigma_1(r)=p(p+2\tau (p-1))-\frac{pa_1(r)^2}{p^2-(pr^{p}-\tau p(p-1)r^{p-2}(1-r^2))^2}.\label{eqn:def:sigma1-2}\]
		We further note the following limits
		\[\lim_{r\to 1}a_1(r)/\sq{1-r^2}=p(p+2\tau (p-1)),\]
		\[\lim_{r\to 1}\frac{1-r^2}{p^2-(pr^{p}-\tau p(p-1)r^{p-2}(1-r^2))^2}=\frac{1}{p^2(p+2\tau (p-1))}.\] 
		Employing these one may similarly obtain that $\lim_{r\to 1}\sigma_2(r)=0$.
	\end{proof}
	\section{The Kac-Rice formula\label{appendix:Kac-Rice-Proof}}
	
	In this section we derive the exact form of the Kac-Rice formula used in Lemma \ref{lem:Kac-Rice} above. This is essentially a modification of Theorem 6.4 of \cite{azais} to a general Riemannian manifold. The method to adapt such a result from the Euclidean case to a general Riemannian manifold is also discussed in (1.3.2) of Chapter 6 in \cite{azais}, though not in a generality sufficient for our case, so we include a statement and proof of our result for completeness.
	
	In preparation, we denote for a smooth manifold $M$, a vector field $f$ on $M$, a function $h:M\to \R^k$, and a choice of $B\subseteq \R^k$, the count
	\[N_{f,h}(B)=\#\{x\in M:f(x)=0,h(x)\in B\}.\]
	\begin{lemma}
		\label{lem:formal-KR}
		Let $M$ be a Riemannian manifold of dimension $d$, $f$ a random vector field on $M$ and $h:M\to \R^k$ a random function such that $(f,h):M\to TM\times \R^k$ is a smooth Gaussian random field. We assume that for each $x\in M$, the Gaussian vector $f(x)$ is non-degenerate. Then for any choice of a (piecewise) smooth orthonormal frame field $E=(E_i)_{i=1}^{d}$ on $M$, and open $B\subseteq \R^k$, we have that
		\[\E[N_{f,h}(B)]=\int_M \E[|\det(\D_Ef(x))|I(h(x)\in B)|f_E(x)=0]\varphi_{f_E(x)}(0)\omega_M(dx),\label{eqn:general-kac-rice}\]
		where $\omega_M$ denotes the Riemannian volume form on $M$, $\varphi_{f_E(x)}(0)$ denotes the density of the random vector $f_{i,E}(x)=(f(x),E_i(x))_M\in \R^d$, where $(*,*)_M$ denotes the Riemannian metric on $M$, and lastly $[\D_Ef(x)]_{ij}=E_if_{j,E}(x)$.
	\end{lemma}
	\begin{proof}
		We will begin with the case where $M$ is identified with an open subset $U$ of $\R^d$, with the standard Euclidean metric, and where $E$ is the standard Euclidean frame field. We first verify the conditions of Theorem 6.2 of \cite{azais} with $Z=f_E$. The only one which we have not taken as an assumption is that \[\P\bigg(\text{there is }x\in U\text{ with }f(x)=0\text{ and }\det(\D_E f(x))=0\bigg)=0.\label{eqn:ignore-compact-1}\]
		On the other hand employing their Proposition 6.5 (and an exhaustion of $U$ by compact subsets) we see that (\ref{eqn:ignore-compact-1}) follows from the non-degeneracy assumption on $f(x)$ for all $x\in U$. Thus by Theorem 6.4 of \cite{azais} we see that for any bounded continuous function $g:\R^k\to \R$, we have that
		\[\E[\sum_{x\in M:\D_E f(x)=0}g(h(x))]=\int_U \E[|\det(\D_E f(x))|g(h(x))|f_E(x)=0]\varphi_{f_E(x)}(0)\prod_{i=1}^ddx_i.\label{eqn:ignore-weak-euclidean-kr}\]
		On the other hand, as $B$ is open, we may write the indicator function of $B$ as the pointwise limit of an monotone increasing sequence of positive continuous functions (take for example $\epsilon^{-1}d(x,U^c)/(1+\epsilon^{-1}d(x,U^c))$ where here $d(x,A)=\inf_{y\in A}\|x-y\|$). By applying the monotone convergence theorem to (\ref{eqn:ignore-weak-euclidean-kr}) for these functions we obtain (\ref{eqn:general-kac-rice}) in our current case.
		
		To proceed, we now relax the assumption on the metric and frame field. We will denote by $\D^{euc}$ the standard Euclidean frame field on $\R^d$, and denote by $f^{euc}$ and $\D^{euc}f$ the quantities with respect to this basis. We observe that for $1\le i\le d$ we have that $E_i(x)=[g(x)^{-1/2}\D^{euc}]_i$ where $[g(x)]_{ij}=(E_i(x),E_j(x))$ is the metric tensor. We also have that $f_E(x)=g(x)^{-1/2}f^{euc}(x)$ and that
		\[\D_Ef(x)=g(x)^{-1}\D^{euc}f(x)+g^{-1/2}(x)(\D^{euc}g^{-1/2}(x))f^{euc}(x).\]
		From these relations we derive that $\varphi_{f_E(x)}(0)=\det(g(x))^{1/2}\varphi_{f^{euc}(x)}(0)$ and
		\[\E[|\det(\D_E f(x))|I(f(x)\in B)|f_E(x)=0]=\]\[\det(g(x))^{-1}\E[|\det(\D^{euc} f(x))|I(f(x)\in B)|f^{euc}(x)=0].\]
		As we have that $\omega_U(dx)=\det(g(x))^{1/2}\prod_{i=1}^{d}dx_i$, we see thus that the integral on the right-hand side of (\ref{eqn:ignore-weak-euclidean-kr}) is independent of both the choice of metric on $M$, and of the choice of orthonormal frame. Thus we have established the case where $M$ may be embedded as an open subset of Euclidean space.
		
		Now finally for the case of a general manifold $M$, we take a countable cover of $M$ by Euclidean neighborhoods $(U_i)_{i=1}^{\infty}$ and define $V_m=\bigcup_{i=1}^{m}U_i$. We observe that all non-empty finite intersections of the $U_i$ may be realized as open subsets of $\R^N$, so by the inclusion-exclusion principle, we see that (\ref{eqn:general-kac-rice}) holds when both sides are restricted to $V_m$. Again applying the monotone convergence theorem completes the proof.
	\end{proof}


	
\end{document}